\newtheorem{theor}{Theorem}
\newtheorem{lem}{Lemma}
\newtheorem{defi}{Definition}
\newtheorem{rem}{Remark}
\newcommand{\id}{
\textrm{I}
}
\newcommand{\XX}{\mathbf{X}}
\newcommand{\XXi}{X_i}
\newcommand{\XuXu}{\mathbf{X}_u}
\newcommand{\XvXv}{\mathbf{X}_v}
\newcommand{\xixi}{{x_i}}
\newcommand{\xx}{\mathbf{x}}
\newcommand{\xuxu}{\mathbf{x}_u}
\newcommand{\xvxv}{\mathbf{x}_v}
\newcommand{\xjxj}{{x_j}}
\newcommand{\LRi}{L^2_{\mathbb{R},i}}
\newcommand{\PX}{P_\mathbf{X}}
\newcommand{\PXi}{P_{\mathbf{X}_i}}
\newcommand{\pX}{p_\mathbf{X}}
\newcommand{\pXu}{p_{\XuXu}}
\newcommand{\cov}{\mathrm{Cov}}
\newcommand{\E}{\mathbb{E}}
\newcommand{\abs}[1]{\left\lvert #1 \right\rvert}
\newcommand{\psemp}[2]{\langle #1,#2 \rangle_{\tiny n}}
\newcommand{\psempn}[2]{\langle #1,#2 \rangle_{  n_1}}
\newcommand{\psempnn}[2]{\langle #1,#2 \rangle_{n_2}}
\newcommand{\psth}[2]{\langle #1,#2 \rangle}
\newcommand{\norm}[1]{\left\| #1 \right\|}
\newcommand{\normemp}[1]{\left\| #1 \right\|_{\tiny n}}
\newcommand{\eucl}[1]{\left\| #1 \right\|_{\text{\tiny 2}}}
\newcommand{\normmat}[1]{
\left\lvert\!\left\lvert\!\left\lvert #1 \right\rvert\!\right\rvert\!\right\rvert_{\text{\tiny 2}}
}
\newcommand{\disc}[2]{[#1 : #2]}
\newcommand{\gener}[1]{\mbox{Span}\left\{ #1 \right\}}
\newcommand{\bolds}[1]{\boldsymbol{#1}}
\newcommand{\lu}{
\boldsymbol{l_u}
}
\newcommand{\lij}{
\boldsymbol{l_{ij}}
}
\newcommand{\incchap}{\hat{\bolds\lambda}_{n_1}}
\newcommand{\inc}{\bolds\lambda}
\newcommand{\vf}[2]{\hat\phi_{#1, n_1}^{#2}}
\newcommand{\vfu}{\hat\phi_{\bolds{l_u}, n_1}^{u}}
\newcommand{\vfv}{\hat\phi_{\bolds{l_v}, n_1}^{v}}
\newcommand{\vfs}{\hat\phi_{\bolds{l_{u_k}}, n_1}^{u_k}}
\newcommand{\vfij}{\hat\phi_{\bolds{\bolds{l_{ij}}}, n_1}^{ij}}
\newcommand{\vfi}{\hat\phi_{l_i,  n_1}^{i}}
\newcommand{\vfsbis}{\hat \phi_{\bolds{l_{u_j}},n_1}^{u_j}}
\newcommand{\pf}[2]{\phi_{#1}^{#2}}
\newcommand{\pfu}{\phi_{\bolds{l_u}}^{u}}
\newcommand{\pfv}{\phi_{\bolds{l_v}}^{v}}
\newcommand{\pfs}{\phi_{\bolds{l_{u_k}}}^{u_k}}
\newcommand{\pfij}{\phi_{\bolds{\bolds{l_{ij}}}}^{ij}}
\newcommand{\pfi}{\phi_{l_i}^{i}}
\newcommand{\pfj}{\phi_{l_j}^{j}}
\newcommand{\cvpro}{ \overset{\mathbb{P}}{\longrightarrow}}
\newcommand{\cvproinf}{ \xrightarrow[n \rightarrow + \infty]{\mathbb{P}}}
\newcommand{\gop}{\mathcal O_P}
\newcommand{\gopxi}{\mathcal O_P(n^{-\xi/2})}
\newcommand{\gopxiv}{\mathcal O_P(n^{\vartheta-\xi/2})}
\newcommand{\gophyp}{\underset{n\rightarrow + \infty}{\mathcal O}(\exp (Cn^{1-\xi}))}
\newcommand{\betasum}{\sum_{\substack{u\in S^*\\ |u|\leq d}} \sum_{\lu}\abs{\beta_{\lu}^{u,0}}}
\newcommand{\betasumm}{\| \bolds{\beta^0}\|_{L^1}}
 \newcommand{\argmin}{\operatornamewithlimits{Argmin}}
\newcommand\beq{\begin{equation}}
\newcommand\eeq{\end{equation}}
\newcommand\beqe{\begin{equation*}}
\newcommand\eeqe{\end{equation*}}
\newcommand\barr{\begin{array}}
\newcommand\earr{\end{array}}
\newcommand{\Pep}{P_{\varepsilon}}
\newcommand{\T}{{}^t}
\newcommand{\dd}{\text{d}}
\begin{document}
\renewcommand{\baselinestretch}{1.2}
\markright{
}
\markboth{\hfill{\footnotesize\rm M. CHAMPION AND G. CHASTAING AND S. GADAT AND C. PRIEUR
}\hfill}
{\hfill {\footnotesize\rm $\mathbb L_2$-BOOSTING ON FANOVA FOR DEPENDENT INPUTS } \hfill}
\renewcommand{\thefootnote}{}
$\ $\par
\fontsize{10.95}{14pt plus.8pt minus .6pt}\selectfont
\vspace{0.8pc}
\centerline{\large\bf $\mathbb L_2$-BOOSTING ON GENERALIZED HOEFFDING}
\vspace{2pt}
\centerline{\large\bf DECOMPOSITION FOR DEPENDENT VARIABLES}
\vspace{2pt}
\centerline{\large\bf  APPLICATION TO SENSITIVITY ANALYSIS}
\vspace{.4cm}
\centerline{ Magali Champion$^{1,3}$, Gaelle Chastaing$^{1,2}$, S\'ebastien Gadat$^1$, \\ Cl\'ementine Prieur$^2$}
\vspace{.4cm}
\centerline{\em $^1$ Institut de Math\'ematiques de Toulouse }
\vspace{2pt}
\centerline{\em $^2$ Universit\'e Joseph Fourier, LJK/MOISE}
\vspace{2pt}
\centerline{\em $^3$ Institut National de la Recherche Agronomique, MIA}
\vspace{.55cm}
\fontsize{9}{11.5pt plus.8pt minus .6pt}\selectfont

\begin{quotation}
\noindent {\em Abstract:}

\par
This paper is dedicated to the study of an estimator of the generalized Hoeffding decomposition. We build such an estimator using an empirical Gram-Schmidt approach and derive a consistency rate in a large dimensional settings. Then, we apply a greedy algorithm with these previous estimators to Sensitivity Analysis. We also establish the consistency of this $\mathbb L_2$-boosting up to sparsity assumptions on the signal to analyse. We end the paper with numerical experiments, which demonstrates the low computational cost of our method as well as its efficiency on standard benchmark of Sensitivity Analysis.

\vspace{9pt}
\noindent {\em Key words and phrases:}
$\mathbb L_2$-boosting, convergence, dependent variables, generalized ANOVA decomposition, sensitivity analysis. 
\par
\end{quotation}\par

\fontsize{10.95}{14pt plus.8pt minus .6pt}\selectfont
\setcounter{chapter}{1}
\setcounter{equation}{0} 
\noindent {\bf 1. Introduction}

In many scientific fields, it is desirable to extend a multivariate regression model as a specific sum of increasing dimension functions. 
Functional ANOVA decomposition or High Dimensional Representation Model (HDMR) given by~\citet*{hooker,li} are well known expansions that allow for understanding the model behaviour, and for detecting how inputs interact to each other. For high dimensional models, the HDMR is also a good way to deal with the curse of dimensionality. Indeed, a model function may be well approximated by some first order functional components, making easier the study of a complex model. However, the existence and uniqueness of the functional ANOVA components is of major importance to valid a study. Thus, some identifiability constraints need to be imposed to make the ANOVA decomposition unique. \\
When input variables are independent, Hoeffding establishes the uniqueness of the decomposition provided that the summands are mutually orthogonal  (see \textit{e.g.} ~\citet*{hoeffding}). Further, as pointed by ~\citet*{sobol}, the analytical expression of these components can be recursively obtained in terms of conditional expectations. Thus, their estimation can be deduced by numerical approximation of integrals (see \textit{e.g} ~\citet*{sobol2,saltelliglobal}).\\
Nevertheless, the independence assumption is often unrealistic for some real-world phenomena. In this paper, we are interested in the ANOVA expansion of some models that depend on not necessarily independent input variables. Following the work of~\citet*{stone}, later exploited in machine learning by~\citet*{hooker}, and in sensitivity analysis by~\citet*{chastaing}, we focus on a generalized Hoeffding decomposition under general assumptions on the inputs distribution. That is, any model function can be uniquely decomposed as a sum of hierarchically orthogonal component functions. Two summands are called \textit{hierarchically orthogonal} whenever all variables included in one of them are also involved in the other. For a better understanding of the paper, this generalized ANOVA expansion will be called a Hierarchically Orthogonal Functional Decomposition (HOFD), as done in~\citet*{chastaing}.\\
Since analytical formulation for HOFD is rarely available, it is of great importance to develop estimation procedures.
In this paper, we focus on an alternative method proposed in~\citet*{chastaing2} to estimate the HOFD components. It consists of constructing a hierarchically orthogonal basis from a suitable Hilbert orthonormal basis. Inspired by the usual Gram-Schmidt algorithm, the procedure recursively builds for each component a multidimensional basis that satisfies the  identifiability constraints imposed to this summand. Then, each component is well approximated on a truncated basis, where the unknown coefficients are deduced by solving an ordinary least-squares. Nevertheless, in a high-dimensional paradigm, this procedure suffers from a curse of dimensionality. Moreover,  it is numerically observed that only a few of coefficients are not close to zero, meaning that only a small number of predictors restore the major part of the information contained in the components. Thus, it is important to be able to select the most relevant representative functions, and next identify the HOFD with a limited computational budget.\\
In this view, we suggest in this article to transform the ordinary least-squares into a penalized regression as it has been proposed in~\citet*{chastaing2}. In the present paper, we focus here on the $\mathbb L_2$-boosting to deal with the $\ell_0$ penalization, developped by~\citet*{friedman}. The $\mathbb L_2$-boosting is a greedy strategy that performs  variable selection and shrinkage. The choice of such an algorithm is motivated by the fact that the $\mathbb L_2$-boosting is very intuitive and easy to implement. It is also closely related (in some practical sense) to the LARS algorithm, proposed by~\citet*{efron}, which solves the Lasso regression with a $\ell_1$ penalization (see \textit{e.g.}~\citet*{buhlmann,tibshirani}). The $\mathbb L_2$-boosting and the LARS both select predictors using the maximal correlation with the current residuals.
 The question that naturally arises now is the following: provided that the theoretical procedure of components reconstruction is well tailored, do the estimators obtained by the $\mathbb L_2$-boosting converge to the theoretical true sparse parameters when the number of observations tends to infinity ?\\
The goal of this paper is to extend the work of~\citet*{chastaing2} by addressing this question. More precisely, the aim is to determine sufficient conditions for which the consistency of the estimators is satisfied. Further, we discuss these conditions and  give some numerical examples where such conditiones are fulfilled. 
One interesting application of the general theory is the global sensitivity analysis (SA). We apply the $\mathbb L_2$-boosting to estimate the generalized sensitivity indices defined in~\citet*{chastaing,chastaing2}. After reminding the form of these indices, we  numerically  compare the $\mathbb L_2$-boosting performance with the LARS technique and the Forward-Backward algorithm, proposed by~\citet*{zhang}. 

The article is organized as follows. Paragraph 2.1 aims at introducing the notation of the paper.
We also remind the HOFD representation of the model function in Paragraph 2.2. In Paragraph 2.3, we recall the procedure detailed in~\citet*{chastaing2} that consists in constructing well tailored hierarchically orthogonal basis to represent the components of the HOFD. At last, we highlight the curse of dimensionality we are exposed to, and present the $\mathbb L_2$-boosting. Section 3 gathers our main theoretical results on the proposed algorithms. Section 4 presents a numerical study of our method. We finally conclude this work in Section 5, and we provide the proofs of the two main theorems in an Appendix.\\

\noindent {\large\bf Acknowledgment}
Authors are indebted to Fabrice Gamboa for motivating discussions and numerous suggestions on the subject.
\par

\setcounter{chapter}{2}
\setcounter{equation}{0} 
\noindent {\bf 2. Estimation of the generalized Hoeffding decomposition components}\label{sec:notation}

{\bf 2.1 Notation}\label{sec:not}

We consider a measurable function $f$ of a random real vector $\XX=(X_1,\cdots,X_p)$ of $\mathbb R^p$, $p\geq 1$. The response variable $Y$ is a real-valued random variable defined as

\begin{equation}\label{eq:model}
Y=f(\XX) + \varepsilon,
\end{equation}
where $\varepsilon$ stands for a centered random variable independent of $\XX$ and models the variability of the response around its theoretical unknown value $f$.
We denote by $\PX$ the distribution law of $\XX$, which is unknown in our setting, and we assume that $\XX$ admits a density function $\pX$ with respect to the Lebesgue measure on $\mathbb R^p$. Note that $\PX$ is not necessarily a tensor product of univariate distributions since the components of $X$ may be correlated.

Further, we suppose that $f\in L^2_{\mathbb R}(\mathbb R^p,\mathcal B(\mathbb R^p),\PX)$, where $\mathcal B(\mathbb R^p)$ denotes the Borel set of $\mathbb R^p$. The Hilbert space $L^2_{\mathbb R}(\mathbb R^p,\mathcal{B}(\mathbb{R}^p),\PX)$ is denoted by $L^2_{\mathbb R}$, for which we use the inner product $\psth{\cdot}{\cdot}$, and the norm $\norm{\cdot}$ as follows,

\[
 \langle h,g\rangle=\int h(\mathbf x)g(\mathbf x) \pX d\xx = \mathbb E(h(\mathbf X)g(\mathbf X))
\]
\[
\norm{h}^2=\psth{h}{h}=\E(h(\XX)^2), \quad \forall h,g\in L_{\mathbb R}^2. 
\]
Here, $\E(\cdot)$ stands for the expected value. Further, $V(\cdot)=\mathbb E[(\cdot-\mathbb E(\cdot))^2]$ denotes the variance,
and $\mbox{Cov}(\cdot,\ast)=\mathbb E[(\cdot-\mathbb E(\cdot))(\ast-\mathbb E(\ast))]$ the covariance.

For any $1 \leq i \leq p$, we denote by $\PXi$ the marginal distribution of $\XXi$ and extend naturally the former notation 
to $L^2_{\mathbb R}(\mathbb R,\mathcal B(\mathbb R),\PXi) := \LRi$.\\

\par

{\bf 2.2 The generalized Hoeffding decomposition}\label{sec:HOFD}

Let us denote $\disc 1 k:=\{1,2,\cdots,k\}$, with $k \in \mathbb N^*$, and let $S$ be the collection of all subsets of $\disc 1 p$. We also define $S^*:=S\setminus \{ \emptyset \}$. For $u\in S$, the subvector $\XuXu$ of $\XX$ is defined as $\XuXu:=(X_i)_{i\in u}$. 
Conventionally, for $u=\emptyset$, $\XuXu=1$. The marginal distribution (\textit{resp.} density) of $\XuXu$ is denoted $P_{\XuXu}$ (\textit{resp.} $\pXu$). 

A functional ANOVA decomposition consists in expanding $f$ as a sum of increasing dimension functions,
\beq\label{fanova}
\barr{lll}
f(\XX)&=&f_\emptyset+ \sum_{i=1}^p f_i(X_i) + \sum_{1\leq i <j \leq p} f_{ij}(X_i,X_j)+ \cdots+ f_{1,\cdots,p}(\XX)\\
&=& \sum_{u \in S} f_u(\XuXu),
\earr
\eeq
where $f_\emptyset$ is a constant term, $f_i$, $i \in \disc 1 p$ are the main effects, $f_{ij}, f_{ijk}, \cdots$, $i,j,k \in \disc 1 p$ are the interaction effects, and the last component $f_{1,\cdots,p}$ is the residual. 

Decomposition \eqref{fanova} is generally not unique. However, under mild assumptions on the joint density $\pX$ (see Assumptions (C.1) and (C.2) in \cite{chastaing}), the decomposition is unique under some additional orthogonality assumptions.

 More precisely, let us introduce $H_{\emptyset}=H_{\emptyset}^0$ the set of constant functions, and for all $u \in S^*$, $H_u:=L^2_{\mathbb R}(\mathbb R^u,\mathcal{B}(\mathbb{R}^u),P_{\XuXu})$. Then we define
$H_u^0$, $u \in S\setminus \emptyset$ as follows:
\[
 H_u^0=\left\{ h_u \in H_u, ~\langle h_u,h_v\rangle=0, \forall~v \subset u, \forall~h_v \in H_v^0 \right\},
\]
where $\subset$ denotes the strict inclusion.

\begin{defi}[Hierarchical Orthogonal Functional Decomposition - HOFD]\label{def1}
Under Assumption (C.1) and (C.2) in \cite{chastaing}, the decomposition (\ref{fanova}) is unique as soon as we assume $f_u \in H_u^0$ for all $u \in S$. 
\end{defi}
\begin{rem}\label{rem1}
The components of the HOFD \eqref{fanova} are referred as hierarchically orthogonal, that is $\langle f_u,f_v \rangle =0$ $\forall v \subset u$. 
\end{rem}
 To get more details on the HOFD, the reader is referred to~\cite{hooker,chastaing}. 
In this paper, we are interested in estimating the summands in (\ref{fanova}). 
As underlined in~\cite{huang}, estimating all components of (\ref{fanova}) suffers from a curse of dimensionality, leading to an intractable problem in practice. To bypass this issue, we assume further along the article (without loss of generality) that $f$ is centered, so that $f_\emptyset=0$ and suppose that $f$ is well approximated by

\begin{equation}\label{approxdec}
f(\XX)\simeq\sum_{\substack{u\in S^*\\ |u|\leq d}} f_u(\XuXu), \quad d\ll p
\end{equation}  

We thus assume that interactions of order $\geq d+1$ can be neglected. 
But even by choosing $d=2$, the number of components in  (\ref{approxdec}) can become prohibitive if the number of inputs $p$ is high. We therefore are interested by estimation procedures under sparse assumptions when the number of variables $p$ is large.\\
In the next section, we remind the procedure to identify components of  (\ref{approxdec}). Through this strategy, we highlight the curse of dimensionality when $p$ is getting large, and we propose to use a greedy $\mathbb L_2$-boosting to tackle this issue.\\

{\bf 2.3 Practical determination of the Sparse HOFD}\label{sec:algo}

{\bf General description of the procedure}

We propose in this section a Two-Steps estimation procedure to identify the components in (\ref{approxdec}): the first one is a simplified version of the Hierarchical Orthogonal Gram-Schmidt (HOGS) procedure developed in \cite{chastaing2}, and the second consists of a $\mathbb L_2$-boosting algorithm (see \textit{e.g.} \cite{friedman,buhlmann06}). The specificity of our new $\mathbb L_2$-boosting algorithm is that it is based on a random dictionary and then falls into the framework of sparse recovery problem with error in the variables.

To lead this two-steps procedure, we assume that we observe two independent and identically distributed samples $(y^r,\xx^r)_{r=1,\cdots,n_1}$ and $(y^s,\xx^s)_{s=1,\cdots,n_2}$ from the distribution of $(Y,\XX)$ (the initial sample can be splitted in such two samples). We define the empirical inner product  $\psemp{\cdot}{\cdot}$ and the empirical norm $\normemp{\cdot}$ associated to a $n$-sample as

\beqe
\psemp{h}{g}=\frac 1 n \sum_{s=1}^n h(\xx^s)g(\xx^s),\quad \normemp{h}=\psemp{h}{h}.
\eeqe
Also, for $u=(u_1,\cdots, u_t)\in S$, we define the multi-index $\lu=(l_{u_1},\cdots, l_{u_t})\in \mathbb N ^t$. We use the notation $\gener{B}$ to define the set of all finite linear combination of elements of $B$, also called the linear span of $B$. 

Step 1 and  Step 2 of our sparse HOFD procedure will be described in details further below.

\begin{rem}\label{rem2}
In the following, we assume that $d=2$ in (\ref{approxdec}). The procedure could be extended to any higher order approximation, but we think that the description of the methodology for $d=2$ helps for a better understanding. We thus have chosen to only describe this situation for the sake of clarity.
\end{rem}

{\bf Step 1: Hierarchically Orthogonal Gram-Schmidt procedure}\label{par:HOFD}

For each $i \in [1:p]$, let $\{\Psi_{l_i}^i \, , \; l_i \in \mathbb{N}\}$ denote an orthonormal basis of $H_i:=L^2(\mathbb R,\mathcal B(\mathbb R),P_{X_i})$. For $L \in \mathbb N^*$, for $i\neq j \in \disc 1 p$, we set
\[
H_{\emptyset}^L=\gener{1} \quad \text{and} \quad 
H_i^L= \gener{1,\psi_{1}^i,\cdots,\psi_{L}^i},
\]
as well as
\[
 H_{ij}^{L}= \gener{ 1,\psi_{1}^i,\cdots,\psi_{L}^i,\psi_{1}^j,\cdots,\psi_{L}^j, \psi_{1}^i\otimes \psi_{1}^j ,\cdots,\psi_{L}^i \otimes \psi_{L}^j}.
\]
We define $H_u^{L,0}$, the approximation of $H_u^0$, as
\[
 H_u^{L,0}=\left\{ h_u \in H_u^L, ~\langle h_u,h_v\rangle=0, \forall~v \subset u, \forall~h_v \in H_v^{L,0} \right\},
\]
The recursive procedure below aims at constructing a basis of $H_i^{L,0}$ and a basis of $H_{ij}^{L,0}$ for any $i \neq j \in [1:p]$.

%

\paragraph{Initialization}
For any $1 \leq i \leq p$, define $\phi_{l_i}^i:=\Psi_{l_i}^i$, $l_i \in \disc 1 L$. Then, thanks to the orthogonality of $\{\Psi_{l_i}^i \, , \; l_i \in \mathbb{N}\}$, we get
$H_i^{L,0}:=\gener{\pf 1 i, \cdots \pf L i}.
$
\paragraph{Second order interactions}
Let $u=\{i,j\}$, with $i\neq  j\in \disc 1 p$. As the dimension of $H_{ij}^L$ is equal to $L^2+2L+1$, and that the approximation space $H_{ij}^{L,0}$ is subject to $2L+1$ constraints, its dimension is then equal to $L^2$. We want to construct a basis for $H_{ij}^{L,0}$, which satisfies the hierarchical orthogonal constraints. We are looking for such a basis of the form:

\beq\label{extend}
\barr{lll}
\pfij(X_i,X_j)&=&\pfi(X_i)\times \pfj(X_j)+\sum_{k=1}^L \lambda_{k,\bolds{\bolds{l_{ij}}}}^i \pf k i(X_i)\\
&&+ \sum_{k=1}^L \lambda_{k,\bolds{\bolds{l_{ij}}}}^j \pf k j(X_j)+C_{\bolds{\bolds{l_{ij}}}},
\earr
\eeq
with $\bolds{\bolds{l_{ij}}}=(l_i,l_j)\in \disc 1 L^2$.

\noindent  The constants $(C_{\bolds{\bolds{l_{ij}}}},(\lambda_{k,\bolds{\bolds{l_{ij}}}}^i)_{k=1}^L,(\lambda_{k,\bolds{\bolds{l_{ij}}}}^j)_{k=1}^L) $ are determined by resolving the following constraints:
\beq\label{basiscond}
\barr{l}
\psth{\pfij}{\pf k i}=0,\quad \forall~k \in \disc 1 L\\
\psth{\pfij}{\pf k j}=0,\quad \forall~k \in \disc 1 L\\
\psth{\pfij}{1}=0.
\earr
\eeq

\noindent  We first solve the linear system:
\beq\label{systmat}
A^{ij} \inc^{\bolds{l_{ij}}}=D^{\bolds{l_{ij}}},
\eeq
where $A^{ij}=
 \begin{pmatrix}
 \mathbb{E}(\Phi_i{}^t\Phi_i) & \mathbb{E}(\Phi_i{}^t\Phi_j) \\
 \mathbb{E}(\Phi_j{}^t\Phi_i) & \mathbb{E}(\Phi_j{}^t\Phi_j)
 \end{pmatrix}$, with $(\Phi_i)_{k}=\phi_{k}^i$, and $(\Phi_j)_{k}=\phi_{k}^j$ for $ k \in \disc 1 {L}$. Also, 
$\inc^{\bolds{l_{ij}}} =\begin{pmatrix} \lambda_{1,\bolds{l_{ij}}}^i & 
\cdots &
\lambda_{L,\bolds{l_{ij}}}^i&
\lambda_{1,\bolds{l_{ij}}}^j&
\cdots&
\lambda_{L,\bolds{l_{ij}}}^j\end{pmatrix}\T$, \\
$D^{\bolds{l_{ij}}}= -
\begin{pmatrix}
\psth{\phi_{l_i}^i\times \phi_{l_j}^j}{\phi_1^i}&
\cdots&
\psth{\phi_{l_i}^i\times \phi_{l_j}^j}{\phi_{L}^i}&
\psth{\phi_{l_i}^i\times \phi_{l_j}^j}{\phi_1^j}&
\cdots&
\psth{\phi_{l_i}^i\times \phi_{l_j}^j}{\phi_{L}^j}
\end{pmatrix}\T$.\\
As shown in~\cite{chastaing2}, $A^{\bolds{l_{ij}}}$ is a definite positive Gramian matrix and (\ref{systmat}) admits a unique solution in $\inc^{\bolds{l_{ij}}}$. Next, $C_{\bolds{l_{ij}}}$ is deduced with
\begin{equation}\label{cresoudre}
 C_{\bolds{l_{ij}}}=-\E\bigg[ \phi_{l_i}^i\otimes \phi_{l_j}^j(X_i,X_j)+\sum_{k=1}^{L}\lambda_{k,\bolds{l_{ij}}}^i\phi_{k}^i(X_i) +\sum_{k=1}^{L}\lambda_{k,\bolds{l_{ij}}}^j\phi_{k}^j(X_j) \bigg].
 \end{equation}
 

 \paragraph{Higher interactions}
 This construction can be extended to any $|u|  \geq 3$. We refer the interested reader to \cite{chastaing2}. Just note that the dimension of the approximation space $H_u^{L,0}$ is given by $L_u=L^{|u|}$, where $|u|$ denotes the cardinality of $u$.

\paragraph{Empirical procedure}

Algorithm \ref{EHOFD} below proposes an empirical version of the HOGS procedure. It consists in
substituting the inner product $\psth \cdot \cdot$ by its empirical version $\psempn \cdot \cdot$ obtained with the first data set  $(y^r, \xx^r)_{r=1,\cdots,n_1}$.

 \begin{algorithm}[H]
 \caption{Empirical HOFD (EHOFD)}\label{EHOFD}
 \BlankLine
\KwIn{Orthonormal system $(\pfi)_{l_i=0}^L$ of $H_i$, $i \in [1:p]$, i.i.d. observations ${\cal O}_1 :=(y^r, \xx^r)_{r=1,\cdots,n_1}$ of \eqref{eq:model}, threshold $|u_{max}|$}
\emph{Initialization:}  for any $i\in \disc 1 p$ and $l_i\in \disc 1 L$, define first $\vfi=\pfi$.

\begin{itemize}
\item  For any $u$ such that $2 \leq |u|\leq |u_{max}|$, write the matrix $(\hat{A}^{ij}_{n_1})$ as well as $(\hat{D}^{\bolds{l_{ij}}}_{n_1})$ obtained  using the former expressions with $\psempn \cdot \cdot$.

\item Solve (\ref{systmat}) with the empirical inner product  $\psempn \cdot \cdot$ and compute $(\bolds{\hat{\lambda}}^{\bolds{l_{ij}}}_{n_1})$.

\item Compute $ \hat C_{\bolds{l_{ij}}}^{n_1}$ by using Equation (\ref{cresoudre}) and $(\hat{\bolds{\lambda}}_{n_1}^{\bolds{l_{ij}}})$.

\item The empirical version of the basis given by \eqref{extend} is then:
\beqe
\forall u \in [2: |u_{max}|]  \quad 
\hat{H}^{L,0,n_1}_u=\gener{\vf 1 u,\cdots, \vf {L_u} u}, \, \text{where}\, ~ L_u=L^{|u|}.
\eeqe
\end{itemize}
 \end{algorithm}

 {\bf Step 2: Greedy selection of Sparse HOFD}
 
Each component $f_u$ of the HOFD defined in Definition \ref{def1} is a projection onto $H_u^0$. Since, for $u \in S^*$, the space $\hat{H}^{L,0,n_1}_u$ well approximates $H_u^0$, it is then natural to approximate $f$ by:
\beqe
 f(\xx)\simeq \bar f(\xx)=\sum_{\substack{u\in S^*\\ |u|\leq d}}\bar f_u(\xuxu),   ~\textrm{with }~\bar f_u(\xuxu)= \sum_{\lu}\beta_{\lu}^u \vfu(\xuxu),
\eeqe
where $\lu$ is the multi-index $\lu=(l_i)_{i\in u} \in \disc 1 L ^{|u|}$. For the sake of clarity (since there is no ambiguity), we will omit the summation support of $\lu$ in the sequel.

Now, we consider the second sample $(y^s, \xx^s)_{s=1,\cdots,n_2}$ and we aim to recover the unknown coefficients $(\beta_{\lu}^u)_{\lu,|u|\leq d}$ on the regression problem,

\beqe
y^s= \bar f(\xx^s)+ \varepsilon^s, \quad s=1,\cdots, n_2.
\eeqe

However, the number of coefficients is equal to $\sum_{k=1}^d \binom p k L^k$. When $p$ gets large, the usual least-squares estimator is not adapted to estimate the coefficients $(\beta_{\lu}^u)_{\lu,u}$. We then use the penalized regression,
\beqe
(\hat\beta_{\lu}^u) \in \argmin_{\beta_{\lu}^ u \in \mathbb R} \frac 1 {n_2} \sum_{s=1}^{n_2} \bigg[  y^s- \sum_{\substack{u\in S^*\\ |u|\leq d}} \sum_{\lu}\beta_{\lu}^u \vfu(\xuxu^s)  \bigg]^2+ \lambda J(\beta_1^1,\cdots, \beta_{\lu}^ u,\cdots),
\eeqe
where $J(\cdot)$ is the $\ell_0$-penalty, i.e. 
$$ J(\beta_1^1,\cdots, \beta_{\lu}^ u,\cdots)=\sum_{\substack{u\in S^*\\ |u|\leq d}} \sum_{\lu} \mathds 1(\beta_{\lu}^ u\neq 0).
$$ 

Of course, such an optimisation procedure is not tractable and we instead consider the relaxed $\mathbb L_2$-boosting (see \textit{e.g.}~\cite{friedman}) to solve this penalized problem. Mimicking the notation of~\cite{temlyakov00, champion}, we define the dictionary $\mathcal D$ of functions as
\beqe
\mathcal D=\{ \vf 1 1, \cdots \vf L 1,\cdots, \vf 1 u,\cdots, \vf {L_u} u,\cdots \}.
\eeqe
The quantity $G_k(\bar f)$  denotes the approximation of $\bar f$ at step $k$, as a linear combination of elements of $\mathcal D$. At the end of the algorithm, the estimation of $\bar f$ is denoted $\hat f$. The $\mathbb L_2$-boosting  is described in Algorithm \ref{algoboost}.

 \begin{algorithm}[H]
 \caption{The $\mathbb L_2$-boosting}\label{algoboost}
 \BlankLine
\KwIn{ Observations ${\cal O}_2 := (y^s,\xx^s)_{s=1,\cdots,n_2}$, shrinkage parameters $\gamma \in ]0,1]$ and  number of iterations $k_{up}\in \mathbb N^*$.}

\emph{\textbf{Initialization}}: $ G_0(\bar f)=0$. \\

\For{$k=1$ to $k_{up}$}{
\begin{enumerate}
\item Select $\vfs \in \mathcal D$ such that
\beq\label{eqphi2}
|\psempnn {Y- G_{k-1}(\bar f)}{\vfs} | = \max_{\vfu \in \mathcal D} |\psempnn { Y- G_{k-1}(\bar f)}{\vfu} |.
\eeq
\item Compute the new approximation of $\bar f$ as 

\beq\label{hatg}
 G_k(\bar f)= G_{k-1}(\bar f)+\gamma \psempnn { Y- G_{k-1}(\bar f)}{\vfs} \cdot \vfs.
\eeq
\end{enumerate}
}
\KwOut{$\hat f= G_{k_{up}}(\bar f)$.}

 \end{algorithm}
 
For any step $k$, Algorithm \ref{algoboost} selects a function from $\mathcal D$ wich provides a sufficient information on the residual
$Y- G_{k-1}(\bar f)$. The shrinkage parameter $\gamma$ is the standard step-length parameter of the boosting algorithm. It actually smoothly inserts the next predictor in the model, making possible a refinement of the greedy algorithm, and may statistically guarantees its  convergence rate. 
\begin{rem}
In a deterministic setting, the shrinkage parameter is not really useful and may be set to $1$ (see \cite{temlyakov00} for further details). It is indeed useful from a practical point of view to smooth the boosting iterations. 
\end{rem}
{\bf An algorithm for our new sparse HOFD procedure}

Algorithm \ref{GHOFD}  below provides now a simplified description of our sparse HOFD procedure, whose steps have been described further above.

 \begin{algorithm}[H]
 \caption{Greedy Hierarchically Orthogonal Functional Decomposition}\label{GHOFD}
 \BlankLine
\KwIn{Orthonormal system $(\Psi_{l_i}^i)_{l_i=0}^L$ of $L^2(\mathbb R,\mathcal B(\mathbb R),P_{X_i})$, $i \in [1:p]$, i.i.d.  observations ${\cal O} := (y^j,\xx^j)_{j=1 \ldots n}$ of \eqref{eq:model}}
\emph{Initialization:} Split ${\cal O}$ in a partition ${\cal O}_1 \cup {\cal O}_2$ of size $(n_1,n_2)$.

\begin{itemize}
\item For any $u \in S$, use Step 1 with observations ${\cal O}_1$ to construct the approximation $\hat{H}_u ^{L,0,n_1}  := \gener{\vf 1 u,\cdots, \vf {L_u} u}$ of $H_u^{L,0}$ (see Algorithm \ref{EHOFD}).
\item Use an $\mathbb L_2$-boosting algorithm on ${\cal O}_2$ with the random dictionary $\mathcal D=\{ \vf 1 1, \cdots \vf L 1,\cdots, \vf 1 u,\cdots, \vf {L_u} u,\cdots \}$ to obtain the Sparse Hierarchically Orthogonal Decomposition  (see Algorithm \ref{algoboost}).
\end{itemize}
 \end{algorithm}
 
 We now obtain a strategy to estimate the components of the decomposition (\ref{approxdec}) in a high-dimensional paradigm. We aim to show that the obtained estimators are consistent, and that the Two-Steps procedure (summarized in Algorithm \ref{GHOFD}) is numerically convincing. The next section is devoted to the asymptotic properties of the estimators. \\
\par

\setcounter{chapter}{3}
\setcounter{equation}{0} 
\noindent {\bf 3. Consistency of the estimator}\label{sec:main}

In this section, we study the asymptotic properties of the estimator $\hat f$ obtained from the Algorithm \ref{GHOFD} described in Section 2. To this end, we restrict our study  to the case of $d=2$ and assume that $f$ is well approximated by first and second order interaction components. Hence, the observed signal $Y$ may be represented as

\beqe
Y=\sum_{\substack{u\in S^*\\ |u|\leq 2}} \sum_{\bolds{l_u}}\beta_{\bolds{l_u}}^{u,0} \pfu(\XuXu)+\varepsilon, \quad \E(\varepsilon)=0,~ \E(\varepsilon^2)=\sigma^2,
\eeqe
where $\bolds\beta^0=(\beta_{\bolds{l_u}}^{u,0})_{\bolds{l_u},u}$ is the true parameter, and the functions $(\pfu)_{\bolds{l_u}}$, $|u|\leq 2$ are constructed according to the HOFD described in the paragraph \ref{par:HOFD}.
We assume that we have in hand a $n$-sample of observations, divided into two samples ${\cal O}_1$ and ${\cal O}_2$. Samples in ${\cal O}_1$ (\textit{\textit{resp.}} in ${\cal O}_2$) of size $n_1=n/2$ (\textit{\textit{resp.}} of size $n_2=n/2$) are used for the construction of $(\vfu)_{\bolds{l_u},u}$ described in Algorithm \ref{EHOFD} (\textit{\textit{resp.}} for the $\mathbb L_2$-boosting Algorithm \ref{algoboost} to estimate $(\beta_{\bolds{l_u}}^u)_{\bolds{l_u},u}$).

 The goal of this section  is to study the consistency of $\hat f=G_{k_n}(\bar f)$ when the sample size $n$ tends to infinity. Its objective is also to determine an optimal number of steps $k_{n}$ to get a consistent estimator from Algorithm \ref{algoboost}. \\

{\bf 3.1 Assumptions}

We first briefly recall some notation: for any sequences $(a_n)_{n\geq 0}$, $(b_n)_{n\geq 0}$, we write $a_n=\underset{n\rightarrow +\infty}{\mathcal O} (b_n)$ when $a_n/b_n$ is a bounded sequence for $n$ large enough. Now, for any random sequence $(X_n)_{n \geq 0}$, $X_n=\gop(a_n)$ means that $\abs{X_n/a_n}$ is bounded in probability. 

We have chosen to present our assumptions in three parts to deal with the dimension, the noise and the sparseness of the entries.

\paragraph{Bounded Assumptions $\mathbf{(H_{b})}$} The first set of hypotheses matches with the \textit{bounded case} and is adapted to the special situation of bounded support for the random variable $X$, for instance when each $X_j$ follows a uniform law on a compact set $\mathcal{K}_j \subset K$ where $K$ is a compact set of $\mathbb{R}$ independent of $j \in [1:p	]$. It is refered as $\mathbf{(H_{b})}$ in the sequel and corresponds to the following three conditions.

 \begin{enumerate}
 \item[] $\mathbf{(H_{b}^1)}$ $M:=\sup_{\substack{i\in \disc 1 p\\ l_i\in \disc 1 {L}}}\norm{\pfi(X_i)}_{\infty}<+ \infty$, 
\item[]  $\mathbf{(H_{b}^2)}$ The number of variables $p_n$ satisfies $$p_n=\gophyp, \text{ where } 0<\xi \leq 1  \text{ and } C>0.$$
\item[]  $\mathbf{(H_{b}^{3,\vartheta})}$ The Gram matrices $A^{ij}$ introduced in \eqref{systmat} satisfies:
$$
\exists C>0 \,  \quad \forall (i,j) \in [1:p_n]^2 \qquad det(A^{ij}) \geq C n^{- \vartheta},
$$
where $det$ denotes the determinant of a matrix.
\end{enumerate} 

 Roughly speaking, this will be the favorable situation from a technical point of view since it will be possible to apply a Matrix Hoeffding's type Inequality. 
It may be possible to slightly relax such an hypothesis using a sub-exponential tail argument. For the sake of simplicity, we have chosen to only restrict our work to the settings of $\mathbf{(H_{b})}$.

Whatever the joint law of the random variables $(X_1,\ldots,X_p)$ is, it is always possible to build an orthonormal basis $(\pfi)_{1 \leq l_i \leq L}$ from a bounded (frequency truncated) Fourier basis and thus $\mathbf{(H_{b}^1)}$ is not so restrictive in practice.

Assumption $\mathbf{(H_{b}^2)}$ copes with the high dimensional situation. The number of variables $p_n$ can grow exponentially fast with the number of observations $n$.

Note that Hypothesis $\mathbf{(H_{b}^{3,\vartheta})}$ stands for a lower bound of the determinant of the Gram matrices involved in the HOFD. It is shown in \citet*{chastaing2} that each of these Gram matrices are invertible and thus each $\det(A^{ij})$ are positive. Nevertheless, if $\vartheta = 0$, this hypothesis  assume that such an invertibility is \textit{uniform} over all choices of tensor $(i,j)$. 
This hypothesis may be too strong for a large number of variables $p_n \rightarrow + \infty$ when $\vartheta=0$. However, when $\vartheta>0$, Hypothesis $\mathbf{(H_{b}^{3,\vartheta})}$ drastically relax the case $\vartheta=0$ and becomes very weak. It will be satisfied in many of our numerical examples. In the sequel, the parameters $\vartheta$ and $\xi$ will be related each other and 
we will obtain a consistency result of the sparse HOFD up to the condition $\vartheta < \xi/2$.
This constraint implicitely limits the size of $p_n$ since  $ \log p_n = \underset{n\rightarrow + \infty}{\mathcal O}(n^{1-\xi})$.

\paragraph{Noise Assumption $\mathbf{(H_{\varepsilon,q})}$}

We will assume the noise measurement $\varepsilon$ to get some bounded moments of sufficiently high order, which is true for Gaussian or bounded noise. This assumption is given by

\begin{enumerate}
\item[] $\mathbf{(H_{\varepsilon,q})}$ $\E(|\varepsilon|^q) < \infty, \quad \text{for one } q \in \mathbb{R}_+.$
\end{enumerate}

\paragraph{Sparsity Assumption $\mathbf{(H_{s})}$}

The last assumption concerns the sparse representation of the unknown signal described by $Y$ in the basis $(\phi_{\bolds{l_u}}^u(\XuXu))_{u}$. Such an hypothesis will be usefull to assess the statistical performance of the $\mathbb L_2$-boosting and will be refered as $\mathbf{(H_{s})}$ in the sequel. It is legitimate by our high dimension setting and our motivation to identify the main interactions $\XuXu$.

\begin{enumerate}
\item[] $
\mathbf{(H_{s})}$  The true parameter  $\bolds\beta^0$  satisfies  uniformly with $n$  $$\bolds\|\beta^0\|_{L^1}:=\betasum < \infty.$$
\end{enumerate}

It is possible to relax this former condition and let $\| \bolds\beta^0\|_{L^1}$ growing to $+\infty$ as $n \rightarrow + \infty$. The price to pay to face such a situation is then a more restrictive condition on the number of variables $p_n$. We  refer to \citet*{buhlmann06} for a short discussion on a related problem and will only consider the situation described by $\mathbf{(H_{s})} $ for the sake of simplicity.\\

{\bf 3.2 Main results}

We first provide our main result on the efficiency of the EHOFD (Algorithm \ref{EHOFD}).
\begin{theor}\label{theo:base} Assume that  $\mathbf{(H_{b})}$ holds with $\xi$ (\textit{resp.} $\vartheta$) given by $\mathbf{(H_{b}^2)}$ (\textit{resp.} $\mathbf{(H_{b}^{3,\vartheta})}$). Then, if $\vartheta< \xi/2$, the sequence of estimators $(\vfu)_{u}$ satisfies:
$$
 \sup_{\substack{u \in S^*, |u|\leq d\\ \bolds{l_u} }} \norm{\vfu-\pfu}=\zeta_{n,0}=\gopxiv.
$$
\end{theor}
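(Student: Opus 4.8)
The plan is to propagate the empirical error through the recursive construction of Step 1, controlling one interaction level at a time. At the bottom level, $\vfi = \pfi$ by definition, so there is nothing to estimate and the error is zero. The work is entirely at the second-order level, where $\vfij$ differs from $\pfij$ only through the estimated coefficients $(\hat C_{\lij}^{n_1}, \hat{\bolds\lambda}_{n_1}^{\lij})$ versus their population counterparts $(C_{\lij}, \inc^{\lij})$, since both share the same building blocks $\pfi, \pfj$ and tensor product $\pfi \otimes \pfj$. Expanding the difference via \eqref{extend},
\[
\vfij - \pfij = \sum_{k=1}^L (\hat\lambda^i_{k,\lij} - \lambda^i_{k,\lij})\,\pf k i + \sum_{k=1}^L (\hat\lambda^j_{k,\lij} - \lambda^j_{k,\lij})\,\pf k j + (\hat C_{\lij}^{n_1} - C_{\lij}),
\]
so under $\mathbf{(H_b^1)}$ the $\norm{\cdot}$-norm of the left side is bounded, up to a constant depending only on $L$ and $M$, by $\eucl{\hat{\bolds\lambda}_{n_1}^{\lij} - \inc^{\lij}} + |\hat C_{\lij}^{n_1} - C_{\lij}|$. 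It therefore suffices to control these two quantities uniformly over all $(i,j)$ and all multi-indices $\lij \in \disc 1 L^2$.

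The first step is to quantify the deviation of the empirical Gram matrix and right-hand side from their population versions. Each entry of $\hat A^{ij}_{n_1} - A^{ij}$ and of $\hat D^{\lij}_{n_1} - D^{\lij}$ is an empirical average of a bounded random variable (bounded by $M^2$ under $\mathbf{(H_b^1)}$) minus its expectation, so a matrix Hoeffding / Bernstein inequality gives $\normmat{\hat A^{ij}_{n_1} - A^{ij}} = \gop(n^{-1/2})$ and likewise for $D$, and crucially a union bound over the $\gop(p_n^2 L^2)$ matrices and vectors costs only a factor $\log(p_n L) = \gop(n^{1-\xi})$ inside the exponential, which is absorbed by the exponential concentration thanks to $\mathbf{(H_b^2)}$. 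This yields a \emph{uniform} bound $\sup_{i,j}\normmat{\hat A^{ij}_{n_1} - A^{ij}} = \gop(n^{-1/2})$, and similarly for the $D$ vectors, at the (negligible) price of a sub-polynomial factor.

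Next I would invert the linear system. Writing $\hat{\bolds\lambda}_{n_1}^{\lij} = (\hat A^{ij}_{n_1})^{-1}\hat D^{\lij}_{n_1}$ and $\inc^{\lij} = (A^{ij})^{-1}D^{\lij}$, the standard perturbation bound for linear systems gives
\[
\eucl{\hat{\bolds\lambda}_{n_1}^{\lij} - \inc^{\lij}} \leq \normmat{(A^{ij})^{-1}}\Big(\eucl{\hat D^{\lij}_{n_1} - D^{\lij}} + \normmat{\hat A^{ij}_{n_1} - A^{ij}}\,\eucl{\hat{\bolds\lambda}_{n_1}^{\lij}}\Big) + (\text{second order}),
\]
and the key point is that $\normmat{(A^{ij})^{-1}}$ is controlled by $1/\det(A^{ij})$ times a bounded cofactor term (the cofactors are bounded under $\mathbf{(H_b^1)}$), hence by $C n^{\vartheta}$ under $\mathbf{(H_b^{3,\vartheta})}$. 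One also needs $\hat A^{ij}_{n_1}$ itself invertible with a comparable determinant lower bound; this follows because $\det(\hat A^{ij}_{n_1}) = \det(A^{ij}) + \gop(n^{-1/2})$ uniformly, which stays bounded below by $\tfrac C2 n^{-\vartheta}$ precisely when $\vartheta < \xi/2$ — this is where the hypothesis $\vartheta < \xi/2$ is consumed, guaranteeing $n^{\vartheta} \cdot n^{-1/2} \to 0$. Combining, $\sup_{i,j,\lij}\eucl{\hat{\bolds\lambda}_{n_1}^{\lij} - \inc^{\lij}} = \gop(n^{\vartheta - \xi/2})$, and plugging this into \eqref{cresoudre} (again an empirical mean of bounded quantities) gives the same order for $\sup|\hat C_{\lij}^{n_1} - C_{\lij}|$. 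Assembling the pieces yields $\sup \norm{\vfu - \pfu} = \gop(n^{\vartheta - \xi/2}) = \zeta_{n,0}$.

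The main obstacle is the \emph{uniformity} in the matrix-inversion step: one must show that $(\hat A^{ij}_{n_1})^{-1}$ exists and is bounded by $\gop(n^{\vartheta})$ simultaneously over all $\gop(p_n^2)$ pairs, which requires carefully coupling the exponential concentration of $\normmat{\hat A^{ij}_{n_1} - A^{ij}}$ with the dimension growth $\mathbf{(H_b^2)}$ and the determinant floor $\mathbf{(H_b^{3,\vartheta})}$, and checking that the resulting perturbation $n^\vartheta \cdot n^{-1/2}$ is $o(1)$ — this is exactly the role of the condition $\vartheta < \xi/2$. The remaining estimates (translating operator-norm control into the $\norm{\cdot}$-norm error on the reconstructed basis functions, handling the constant $C_{\lij}$) are routine given $\mathbf{(H_b^1)}$.
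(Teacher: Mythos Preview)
Your approach is essentially the paper's: reduce to coefficient errors, control $\normmat{\hat A_{n_1}-A}$ and $\eucl{\hat D_{n_1}-D}$ by matrix Hoeffding/Bernstein plus a union bound, then push through the linear-system perturbation using $\normmat{A^{-1}}\leq C n^{\vartheta}$ from $\mathbf{(H_b^{3,\vartheta})}$. The paper organizes this into four lemmas (for $A$, $D$, $\bolds\lambda$, $C$) and, in the inversion step, works with $(\id - A^{-1}(A-\hat A_{n_1}))^{-1}$ rather than directly inverting $\hat A_{n_1}$, but the content is the same.

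There is, however, a rate inconsistency in your write-up. You claim the \emph{uniform} deviation $\sup_{i,j}\normmat{\hat A_{n_1}^{ij}-A^{ij}}=\gop(n^{-1/2})$ ``at the (negligible) price of a sub-polynomial factor''. That is not right: the union bound is over $p_n^2$ pairs with $\log p_n = \mathcal{O}(n^{1-\xi})$, so to make $p_n^2 e^{-c n t^2}$ small one needs $nt^2 \gtrsim n^{1-\xi}$, i.e.\ $t\gtrsim n^{-\xi/2}$. The uniform rate is therefore $\gopxi$, not $\gop(n^{-1/2})$, and this is exactly what the paper proves. Consequently your sentence ``$\det(\hat A_{n_1}^{ij})=\det(A^{ij})+\gop(n^{-1/2})$ stays bounded below by $\tfrac C2 n^{-\vartheta}$ precisely when $\vartheta<\xi/2$'' is internally inconsistent: with $n^{-1/2}$ you would only need $\vartheta<1/2$. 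Replace $n^{-1/2}$ by $n^{-\xi/2}$ throughout and the condition $\vartheta<\xi/2$ is then genuinely what makes $n^{\vartheta}\cdot n^{-\xi/2}\to 0$. Your final rate $\gopxiv$ is correct, but the intermediate bookkeeping should be fixed to match it.
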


The proof of this Theorem is deferred to the Appendix section.
Our second main result concerns the $\mathbb L_2$-boosting which recovers the unknown $\tilde{f}$ up to a preprocessing estimation of  $(\vfu)_{\bolds{l_u},u}$ on a first sample ${\cal O}_1$. Such a result is satisfied provided the sparsity Assumptions $\mathbf{(H_{s})}$.
We assume that 
\[
Y=\tilde f(\XX)+\varepsilon, \quad \tilde f(\XX)=\sum_{\substack{u\in S^*\\ |u|\leq d}} \sum_{\bolds{l_u}}\beta_{\bolds{l_u}}^{u,0} \pfu(\XuXu) \in H_u^L,
\]
where $\bolds\beta^0=(\beta_{\bolds{l_u}}^{u,0})_{\bolds{l_u},u}$ is the true parameter that expands $\tilde{f}$.

\begin{theor}[Consistency of the $\mathbb L_2$-boosting]\label{thcvboost}
 Consider an estimation $\hat f$ of $\tilde  f$ from an i.i.d. $n$-sample broken up into ${\cal O}_1 \cup {\cal O}_2$. 
Assume  that functions $(\vfu)_{\bolds{l_u},u}$ are estimated from the first sample ${\cal O}_1 $ under  $\mathbf{(H_{b})}$ with $\vartheta < \xi/2$.\\
 Then, $\hat f$ is defined by (\ref{hatg}) of Algorithm \ref{algoboost} on ${\cal O}_2$ as
\beqe
\hat f(\XX)= G_{k_{n}}(\bar f), \quad \textrm{with }~\bar f =\sum_{\substack{u\in S^*\\ |u|\leq d}} \sum_{\bolds{l_u}}\beta_{\bolds{l_u}}^{u,0} \vfu(\XuXu). 
\eeqe
 If we assume that $\mathbf{(H_{s})}$ and $\mathbf{(H_{\varepsilon,q})}$  are satisfied with $q>4/ \xi$, then there exists a sequence $k_{n}:=C \log n$, with $C< (\xi/2-\vartheta) /( 2 \cdot \log 3)$ such that

\beqe
 \|\hat f -\tilde f \| \cvpro 0, \text{when }~ n \rightarrow +\infty.
\eeqe
\end{theor}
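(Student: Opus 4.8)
The plan is to decompose the error $\|\hat f - \tilde f\|$ into a bias term coming from the greedy boosting procedure and a stochastic term coming from (i) the replacement of the theoretical dictionary $(\pfu)_{\bolds{l_u},u}$ by its empirical estimate $(\vfu)_{\bolds{l_u},u}$ built on $\mathcal O_1$, and (ii) the replacement of the theoretical inner product $\psth{\cdot}{\cdot}$ by the empirical $\psempnn{\cdot}{\cdot}$ on $\mathcal O_2$. First I would introduce an \emph{oracle} boosting sequence $\tilde G_k(\tilde f)$ that runs Algorithm~\ref{algoboost} with the true dictionary $\mathcal D^0 = \{\pfu\}$ and the true $L^2$ inner product; for this idealized sequence, classical weak-greedy / $\mathbb L_2$-boosting convergence results (in the spirit of \cite{temlyakov00,buhlmann06,champion}) give, under $\mathbf{(H_{s})}$, a deterministic bound of the form $\|\tilde G_k(\tilde f) - \tilde f\| \leq c\, \|\bolds\beta^0\|_{L^1}\, k^{-\kappa}$ for some positive exponent $\kappa$ — call this the approximation (bias) part, which tends to $0$ as $k_n \to \infty$.

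Next I would control the discrepancy between the actual run $G_{k}(\bar f)$ of Algorithm~\ref{algoboost} (random dictionary, empirical inner product, noisy responses $Y$) and the oracle run $\tilde G_k(\tilde f)$, by a step-by-step propagation-of-error argument. At each iteration the error is amplified by at most a fixed factor (this is where the $3^{k}$-type growth appears, hence the constraint $C < (\xi/2-\vartheta)/(2\log 3)$ on $k_n = C\log n$), and is fed by three per-step perturbations: the dictionary error $\sup_{u,\bolds{l_u}} \|\vfu - \pfu\| = \zeta_{n,0} = \gopxiv$ from Theorem~\ref{theo:base}; the empirical-process error $\sup_{u,\bolds{l_u}} |\psempnn{g}{\vfu} - \psth{g}{\pfu}|$ for $g$ ranging over the relevant residuals, controlled by a concentration/maximal inequality using $\mathbf{(H_b^1)}$ (uniform boundedness $M$), the cardinality bound $\sum_{k\le d}\binom{p_n}{k}L^k$ on $\mathcal D$ together with $\mathbf{(H_b^2)}$ ($\log p_n = \mathcal O(n^{1-\xi})$), and the moment assumption $\mathbf{(H_{\varepsilon,q})}$ with $q > 4/\xi$ to handle the unbounded noise term $\psempnn{\varepsilon}{\vfu}$ via a truncation argument (Markov at order $q$ gives a rate $n^{-1/2 + 2/(q\xi)}$-type bound that is $o(1)$ precisely because $q>4/\xi$); and a residual-magnitude control ensuring the selected coefficients stay bounded along the $k_n$ iterations. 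Summing the geometric amplification over $k_n = C\log n$ steps yields a global stochastic error of order $3^{k_n}(\zeta_{n,0} + \text{empirical error}) = \gop\!\big(n^{C\log 3}\cdot n^{\vartheta - \xi/2 + o(1)}\big)$, which converges to $0$ in probability exactly under $C\log 3 < \xi/2 - \vartheta$, i.e. $C < (\xi/2-\vartheta)/(2\log 3)$ after accounting for the two-sided amplification. Combining with the bias bound by the triangle inequality, $\|\hat f - \tilde f\| \le \|\bar f - \tilde f\| + \|G_{k_n}(\bar f) - \tilde G_{k_n}(\tilde f)\| + \|\tilde G_{k_n}(\tilde f) - \tilde f\|$, and noting that $\|\bar f - \tilde f\| \le \|\bolds\beta^0\|_{L^1}\,\zeta_{n,0} \cvpro 0$ by $\mathbf{(H_s)}$ and Theorem~\ref{theo:base}, gives the claimed convergence.

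The main obstacle I anticipate is the per-step amplification analysis: one must show that a small perturbation of the dictionary and the inner product does not cause the greedy selection rule \eqref{eqphi2} to pick a disastrously wrong atom, and that when it does pick a different atom the resulting error is still controlled. This requires carefully tracking how $\max_{\vfu}|\psempnn{Y - G_{k-1}(\bar f)}{\vfu}|$ compares with $\max_{\pfu}|\psth{Y - \tilde G_{k-1}(\tilde f)}{\pfu}|$, bounding the residual norms $\|Y - G_{k-1}(\bar f)\|$ uniformly over $k \le k_n$ (so that the dictionary error $\zeta_{n,0}$ multiplied by these norms stays small), and keeping the shrinkage parameter $\gamma$ fixed so the update \eqref{hatg} contributes a bounded multiplicative factor — this is the delicate bookkeeping that forces the logarithmic cap on $k_n$ and must be done with enough uniformity to survive the supremum over the exponentially large random dictionary.
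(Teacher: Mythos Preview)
Your overall strategy---triangle-inequality decomposition into $\|\bar f-\tilde f\|$, a coupling term, and a pure-boosting bias; concentration for the empirical inner products uniformly over the dictionary via $\mathbf{(H_b^1)}$--$\mathbf{(H_b^2)}$; truncation for $\psempnn{\varepsilon}{\vfu}$ under $\mathbf{(H_{\varepsilon,q})}$ with $q>4/\xi$; and a geometric amplification over $k_n=C\log n$ steps---matches the paper's. The crucial divergence is your choice of comparison sequence. You introduce an \emph{oracle} run $\tilde G_k(\tilde f)$ that performs its \emph{own} greedy selection with the true dictionary $\{\pfu\}$ and the true inner product. The paper instead uses a \emph{phantom} residual defined by $\tilde R_0(\bar f)=\bar f$ and $\tilde R_k(\bar f)=\tilde R_{k-1}(\bar f)-\gamma\,\psth{\tilde R_{k-1}(\bar f)}{\vfs}\,\vfs$, where $\vfs$ is the atom chosen by the \emph{actual} empirical algorithm at step $k$. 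The phantom thus follows exactly the same selection path as $G_k(\bar f)$ but updates with the theoretical inner product, and the decomposition is $\|\hat f-\tilde f\|\le\|\bar f-\tilde f\|+\|R_{k_n}(\bar f)-\tilde R_{k_n}(\bar f)\|+\|\tilde R_{k_n}(\bar f)\|$.

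This distinction is precisely what resolves the obstacle you flag in your last paragraph. With an independent oracle, whenever the two runs pick different atoms the per-step discrepancy in the update is $O(\gamma)$, not $O(\zeta_n)$: the argmax in \eqref{eqphi2} is unstable under small perturbations, so $\|G_k(\bar f)-\tilde G_k(\tilde f)\|$ cannot be propagated recursively with a small feed term, and your coupling bound fails. With the phantom there is no selection mismatch to control; the recursion for $A_n(k,u):=\psempnn{Y-G_k(\bar f)}{\vfu}-\psth{\tilde R_k(\bar f)}{\pfu}$ closes cleanly and yields $\sup_{u,\bolds{l_u}}|A_n(k,u)|\le C(5/2)^k\zeta_n$ on a high-probability event. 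The price is that one must then check that the phantom is a \emph{weak} greedy algorithm for the theoretical problem: the paper shows that, on a further high-probability event, $|\psth{\tilde R_{k-1}(\bar f)}{\pfs}|\ge\tfrac12\sup_{u,\bolds{l_u}}|\psth{\tilde R_{k-1}(\bar f)}{\pfu}|$, so the deterministic Temlyakov-type rate applies to $\|\tilde R_{k_n}(\bar f)\|$. In short, your oracle gets the Temlyakov bias bound for free but cannot be coupled to the actual run; the paper's phantom couples for free but requires the weak-greedy verification. The latter route is what makes the argument go through, and it is the missing idea in your proposal.
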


We briefly describe the proof and postpone the technical details to the Appendix section.

\begin{proof}[Sketch of Proof of Theorem \ref{thcvboost}]

Mimicking the scheme of~\citet*{buhlmann06} and~\citet*{champion}, the proof first consists in defining the theoretical residual of Algorithm \ref{algoboost} at step $k$ as

\beq\label{theorres}
\barr{lll}
R_k(\bar f)&=&\bar f- G_k(\bar f)\\
&=& \bar f- G_{k-1}(\bar f)-\gamma \psempnn { Y- G_{k-1}(\bar f)}{\vfs} \cdot \vfs
\earr
\eeq

Further, following the work of~\citet*{champion}, we introduce a \emph{phantom} residual in order to reproduce the behaviour of a deterministic boosting, studied in~\citet*{temlyakov00}. This \emph{phantom} algorithm is the theoretical $\mathbb L_2$-boosting, performed using the randomly chosen elements of the dictionary by Equations \eqref{eqphi2} and \eqref{hatg}, but updated using the deterministic inner product.
The \emph{phantom} residuals $\tilde R_k(\bar f)$, $k\geq 0$, are defined as follows,

\beq\label{phantom}
\left\{
\barr{l}
\tilde R_0(\bar f)=\bar f\\
\tilde R_k(\bar f)=\tilde R_{k-1}(\bar f)-\gamma \psth{\tilde R_{k-1}(\bar f)}{\vfs} \vfs,
\earr
\right.
\eeq
where $\vfs$ has been selected with Equation (\ref{eqphi2}) of Algorithm \ref{algoboost}. The aim is to decompose the quantity $\norm{\hat f-\tilde f}$ to introduce the theoretical residuals and the \emph{phantom} ones,


\beq\label{decompores}
\norm{\hat f-\tilde f}=\norm{G_{k_{n}}(\bar f)-\tilde f} \leq \norm{\bar f-\tilde f} + \norm{R_{k_{n}}(\bar f)-\tilde  R_{k_{n}}(\bar f)}+ \norm{\tilde  R_{k_{n}}(\bar f)}.
\eeq
%
We then have to show that each term of the right-hand side of (\ref{decompores})  converges towards zero in probability. 
\end{proof}

\par

\setcounter{chapter}{4}
\setcounter{equation}{0} 
\noindent {\bf 4. Numerical Applications}\label{sec:num}

In this section, we are interested by the numerical  efficiency of the Two-Steps procedure given in Section 2, and we primarily focus on the practical use of the HOFD through sensitivity analysis (SA). The goal of SA is to identify and to rank the input variables that drive the uncertainty of the model output. For further details, the reader may refer to~\citet*{saltelli,cacuci}. Therefore, the HOFD presented in Paragraph 2.2 is of great interest, because it may be used to decompose the global variance of the model. Here, as each HOFD is subject to hierarchical orthogonality constraints given in Definition \ref{def1}, we obtain that
\beqe
V(Y)= \sum_{u \in S^*} \left [ V(f_u(\XuXu)) + \sum_{u \cap v \neq u,v} \cov(f_u(\XuXu), f_v(\XvXv)) \right]
\eeqe
 Therefore, to measure the contribution of $\XuXu$, for $|u|\geq 1$, in terms of variability in the model, it is then quite natural to define a sensitivity index $S_u$ as follows,
 \beqe
 S_u=\frac { V(f_u(\XuXu)) + \sum_{u \cap v \neq u,v} \cov(f_u(\XuXu), f_v(\XvXv))}{V(Y)}.
 \eeqe
This definition is given and discussed in~\citet*{chastaing}. In practice, once we have applied the procedure described in Algorithm \ref{GHOFD} to get $(\hat f_u, \hat f_v, u \cap v\neq u,v)$, it is straightforward to deduce the empirical estimation of $S_u$, for all $u$. In the following, we are mostly interested by the estimation of the first and second order sensitivity indices (i.e. $S_i$ and $S_{ij}$, $i, j \in \disc 1 p$).\\


{\bf 4.1 Description}

We end the work with a short simulation study and we are primarily interested by the performance of the greedy selection algorithm for the prediction of generalized sensitivity indices.
As the estimation of these indices consists in estimating the summands of the generalized functional ANOVA decomposition (called HOFD), we start by constructing a hierarchically orthogonal system of functions to approximate the components.
As pointed above (see Assumption $\mathbf{(H_{b}^{3,\vartheta})}$ in Theorem \ref{theo:base} and \ref{thcvboost}), the invertibility of each linear system plays an important role in our theoretical study. We hence have measured for each situation the degeneracy of involved matrices given by

\[
d(A)=\inf_{i,j\in [1:p]} \textrm{det}( A^{ij}).
\]

 Then, we use a variable selection method to select a sparse number of predictors. The goal is to numerically compare three variable selection methods:  the $\mathbb L_2$-boosting, the Forward-Backward greedy algorithm (refered as FoBa in the sequel), and the Lasso estimator.  As pointed above, we have in hand a $n$-sample of i.i.d. observations $(y^s,\xx^s)_{s=1,\cdots,n}$ broken up into two samples of size $n_1=n_2=n/2$. The first sample is used to  construct the system of functions according to Algorithm \ref{EHOFD}. Let us now briefly describe how we use the Lasso and the FoBa.
Each of the three selection methods aims to solve a generic minimization problem
\beqe
(\hat\beta_{\lu}^u)_{\lu,u} \in \argmin_{\beta_{\lu}^ u \in \mathbb R} \frac 1 {n_2} \sum_{s=1}^{n_2} \bigg[  y^s- \sum_{\substack{u\in S\\ |u|\leq d}} \sum_{\lu}\beta_{\lu}^u \vfu(\xuxu^s)  \bigg]^2+ \lambda J(\beta_1^1,\cdots, \beta_{\lu}^ u,\cdots),
\eeqe

{\bf 4.2 Feature selection Algorithms}

\paragraph{FoBa procedure}

The FoBa algorithm, as well as the $\mathbb L_2$-boosting, uses a greedy exploration to minimize the previous criterion when $J(\cdot)$ is a $\ell_0$ penalty, i.e.
$$ J(\beta_1^1,\cdots, \beta_{\lu}^ u,\cdots)=\sum_{\substack{u\in S^*\\ |u|\leq d}} \sum_{\lu} \mathds 1(\beta_{\lu}^ u\neq 0).
$$ 
This algorithm is an iterative scheme that sequentially selects or deletes an element of $\mathcal D$ that has the least impact on the fit, i.e. that significantly reduces the model residual.  This algorithm is described in~\citet*{zhang}, and exploited for HOFD in~\citet*{chastaing2}. We refer to these references for a deeper description of this algorithm. This procedure depends on two shrinkage parameters $\epsilon $ and $\delta$. The parameter $\epsilon$ is the stopping criterion, that predefines if a large number of predictors is going to be introduced in the model. The second parameter, $\delta \in ]0,1]$ offers a flexibility in the \emph{backward} step, as it allows the algorithm to smoothly eliminate at each step a predictor.

In our numerical experiments, we have found a well suited behaviour of the FoBa procedure with $\epsilon=10^{-2}$ and $\delta=1/2$.

\paragraph{Calibration of the Boosting}

We have set $\gamma=0.7$ since it has been previously reported in \citet*{champion} that it was a suitable value for high dimensional regression. 
As we do not know a priori the optimal value for $k_{\textrm{up}}$, we use a $C_p$-Mallows type criterion to fix the optimal number of  iterations. We follow the recommendations of~\citet*{efron} to select the best solution in the LARS algorithm. First, we define a large number of iterations, say $K$. For each step $k \in \{1,\cdots,K\}$, the boosting algorithm computes an estimation of the solution $\hat{\bolds \beta}(k)$. From this, we compute the following quantity,

\beqe
E_k^{\mathrm{Boost}}=\frac 1 n \sum_{s=1}^{n_2} \bigg[ y^s -\sum_{\vfu \in \mathcal D} \hat\beta_{\lu}^u(k) \vfu(\xuxu^s)  \bigg]^2 -n_2+ 2k,
\eeqe 
 where the implied set of functions $\vfu$ have been selected through the first $k$ steps of the algorithm. At last, we choose the optimal number of selected functions $\hat k_{\textrm{up}}$ such that
\beqe
\hat k_{\textrm{up}}=\argmin_{k=1,\cdots,K}E_k^{\mathrm{Boost}}.
\eeqe  


\paragraph{Lasso algorithm}

  As the $\ell_0$ strategy is very difficult to handle and may suffer from a lack of robustness,  the $\ell_0$ penalty is often replaced by the $\lambda \times \ell_1$ one, that yield to the Lasso estimator for a given penalization parameter $\lambda>0$. A numerical way to solve it is to use the LARS regression, described in~\citet*{efron} and we refer to this standard reference for a sharp description of this procedure.

Admitting that for a given $\lambda>0$, the Lasso regression admits a unique solution, as described in~\citet*{tibshirani},~\citet*{efron} show that the estimated solution with LARS coincide with the theoretical regularization path $\hat{\bolds\beta}(\lambda)$. The LARS algorithm performs the Lasso regression by offering a set of solutions $\{\hat{\bolds\beta}(\lambda), ~\lambda \in \mathbb R^+\}$. However, the "best" $\lambda$ must be determined to only obtain one solution. In this view, we consider here the criterion defined in~\citet*{efron}. At each step $k$ of the algorithm, the following quantity is computed,

\beqe
E_k^{\mathrm{Lars}}= \left\| \mathbb Y-\mathbb X \hat{\bolds\beta}(\lambda_k)\right\|_{n_2}^2  -n_2 +2k
\eeqe

where $\lambda_k$ is the regularization parameter of the $k$th step. The optimal  $\hat \lambda=\lambda(\hat k)$ is selected such that $\hat k=\argmin_{k} E_k^{\mathrm{Lars}}$ and we keep for the Lasso estimator  $\hat \beta(\hat\lambda)$.\\

{\bf 4.3 Data sets}

Each experiment on each data set has been randomly reproduced 50 times to compute the Monte-Carlo errors.

\paragraph{First Data set: the Ishigami function}

Well known in sensitivity analysis, the analytical form of the Ishigami model is given by,
\beqe
Y=\sin(X_1)+a \sin^2(X_2)+b X_3^4 \sin (X_1),
\eeqe
 where we set $a=7$ and $b=0.1$, and where it is assumed that the inputs are independent. In the numerical experience, we consider the following cases. 
 
 \begin{enumerate}
%
%
\item   \label{step1}  For all $i=1,2,3$, the inputs are uniformly distributed  on $[-\pi,\pi]$. We choose $n=300$ observations,
with the first $8$ Legendre basis functions ($L=8$).
 \item \label{step2} 
 For all $i=1,2,3$, the inputs are uniformly distributed on $[-\pi,\pi]$. We choose $n=300$ observations,
  with the first $8$ Fourier basis functions.
\end{enumerate}

Each time, the number of predictors is $m_n= p L + \binom p 2 L^2 = 408 \geq n$.


\paragraph{Second Data set: the $g$-Sobol function}

This function is referred in \citet*{saltelli}, and is given by
\[
Y=\prod_{i=1}^p \frac{\abs{4X_i-2}+a_i}{1+a_i}, \quad a_i\geq 0,
\]
where the inputs $X_i$ are independent and uniformly distributed over $[0,1]$. The analytical Sobol indices are given by

\[
S_u=\frac{1}{D}\prod_{i\in u}D_i, \quad D_i=\frac{1}{3(1+a_i)^2}, ~~ D=\prod_{i=1}^p (D_i+1)-1, ~\forall~u \subseteq [1: p].
\]
 Here, we give $a=( 0,
    1,
    4.5,
    9,
   99,
   99,
   99,
   99,
   99,
   99)$.
 For the construction of the hierarchical basis functions, we choose the first $5$ Legendre polynomials ($L=5$). The ANOVA  representation is approximated by first and second order interaction effects, i.e. $d=2$. We use $n=700$ evaluations of the model and the number of predictors $m_n =p L+\binom p 2 L^2=1175$, which clearly exceeds the sample size $n$.\\

{\bf 4.4 The tank pressure model}

This real case study concerns a shell closed by a cap and subject to an internal pressure. Figure \ref{shell} illustrates a simulation of tank distortion. We are interested in the von Mises stress, detailed in~\citet*{vonmises} on the point $y$ labelled in Figure \ref{shell}. The von Mises stress allows for predicting material yielding which occurs when it reaches the material yield strength. 
The selected point $y$ corresponds to the point for which the von Mises stress is maximal in the tank. Therefore, we want to prevent the tank from material damage induced by plastic deformations.
To offer a large panel of tanks able to resist to the internal pressure, a manufacturer wants to know the most contributive parameters to the von Mises criterion variability. 
In the model we propose, the von Mises criterion depends on three geometrical parameters: the shell internal radius ($R_{int}$), the shell thickness ($T_{shell}$), and the cap thickness ($T_{cap}$). 
It also depends on five physical parameters concerning the Young's modulus ($E_{shell}$ and $E_{cap}$) and the yield strength ($\sigma_{y,shell}$ and $\sigma_{y,cap}$) of the shell and the cap. 
The last parameter is the internal pressure ($P_{int}$) applied to the shell. 
The system is modelized by a 2D finite elements code ASTER. In table \ref{tabshell}, we give the input distributions.

\begin{figure}[H]
\centering
 \includegraphics[width=0.4\textwidth]{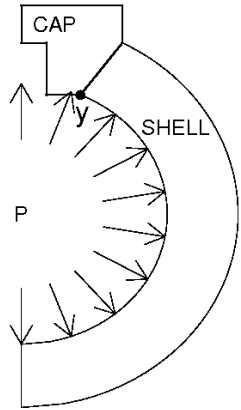}
\caption{Tank distortion at point $y$}\label{shell}
\end{figure} 

\begin{table}
\centering
\renewcommand{\arraystretch}{1.5}
\begin{tabular}{|c|c|}
\hline
Inputs  & Distribution\\
\hline
$R_{int}$ &  $\mathcal U([1800;2200])$, $\gamma(R_{int},T_{shell})=0.85$\\
$T_{shell}$  & $\mathcal U([360;440])$, $\gamma(T_{shell},T_{cap})=0.3$\\
$T_{cap}$  &  $\mathcal U([180;220])$, $\gamma(T_{cap}, R_{int})=0.3$\\
\hline
$E_{cap}$ & $\alpha N(\mu,\Sigma)+(1-\alpha)N(\mu,\Omega)$\\
$\sigma_{y,cap}$ & $\alpha=0.02$, $\mu=
\begin{pmatrix}
 210\\
500
\end{pmatrix}
$, $\Sigma=
\begin{pmatrix}
 350 & 0\\
0 &29
\end{pmatrix}
$, $\Omega=
\begin{pmatrix}
175 & 81 \\
81 & 417
\end{pmatrix}$   \\
\hline
$E_{shell}$  & $\alpha N(\mu,\Sigma)+(1-\alpha)N(\mu, \Omega)$\\ 
$\sigma_{y,shell}$ &  $\alpha=0.02$, $\mu=
\begin{pmatrix}
 70\\
300
\end{pmatrix}
$, $\Sigma=
\begin{pmatrix}
 117 & 0\\
0 &500
\end{pmatrix}
$, $\Omega=
\begin{pmatrix}
58 & 37 \\
37 & 250
\end{pmatrix}$ \\
\hline
$P_{int}$  & $N(80,10)$\\
\hline
\end{tabular}
\caption{Description of inputs of the shell model}\label{tabshell}
\end{table}

 The geometrical parameters are uniformly distributed because of the large choice left for the tank building. The correlation $\gamma$ between the geometrical parameters is induced by the constraints of manufacturing processes. The physical inputs are normally distributed and their uncertainty are due to the manufacturing process and the properties of the elementary constituents variabilities. The large variability of $P_{int}$ in the model corresponds to the different internal pressure values which could be applied to the shell by the user. \\
To measure the contribution of the correlated inputs to the output variability, we estimate the generalized sensitivity indices. 
We proceed to $n=1000$ simulations. 
We use the first Hermite basis functions whose maximum degree is $5$ for  every parameters. \\

{\bf 4.5 Results}

We consider both the estimation of the sensitivity indices, the ability to select the good representation of the different signals, and the computation time needed to obtain the sparse representation. "Greedy" refers to the Foba procedure as well as "LARS" refers to the Lasso resolution, and we refer to our method as "Boosting". 

\paragraph{Sensitivity estimation}
Figures \ref{figishiGBL} and \ref{figishiGBL2} provide the dispersion of the sensitivity indices estimated by our three methods on the Ishigami function. We can see that the three methods behave well with the two basis. Note that handling the Fourier basis is, as expected, more suitable for the Ishigami function than the Legendre basis (see the sensitivity index $S_3$ in Figures \ref{figishiGBL} and \ref{figishiGBL2}). We can also draw similar conclusions with Figure \ref{fig_gsobol}, where the three methods yields the same conclusion. Note also that the standard deviations of each method seem quite equivalent. 

At last, as pointed by Figure \ref{fig_vessel}, the most contributive parameter to the von Mises criterion variability is the internal pressure $P_{int}$, which is not surprising. Concerning now the geometric characteristics, the three methods exhibit as main parameters the cap thickness $T_{cap}$ and the shell thickness $T_{shell}$ using their expensive code although the shell internal radius does not seem so important.

  \begin{figure}
\centering
 \includegraphics[width=0.6\textwidth, angle=270]
{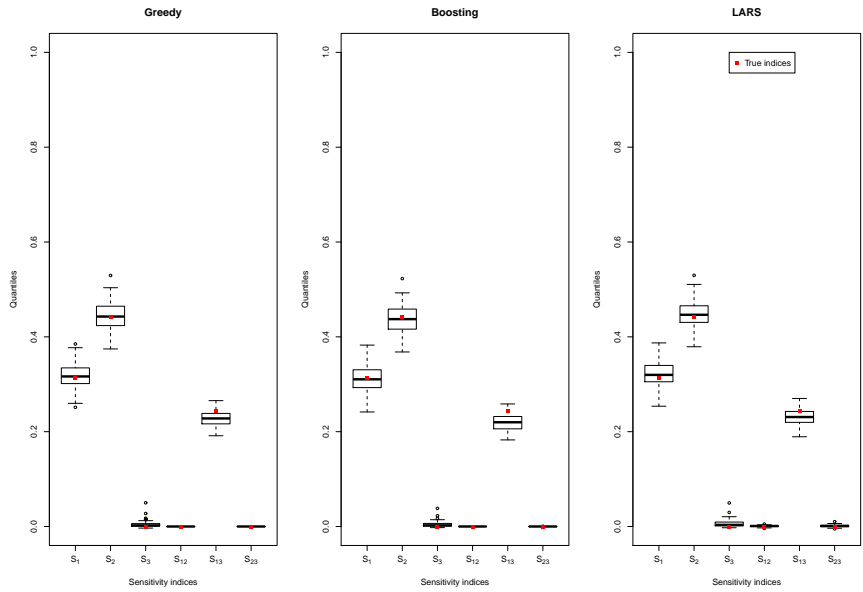}
\caption{Representation of the first-order components on the First Data set (Ishigami function) described through the Fourier basis}\label{figishiGBL}
\end{figure}

  \begin{figure}
\centering
 \includegraphics[width=0.6\textwidth, angle=270]
{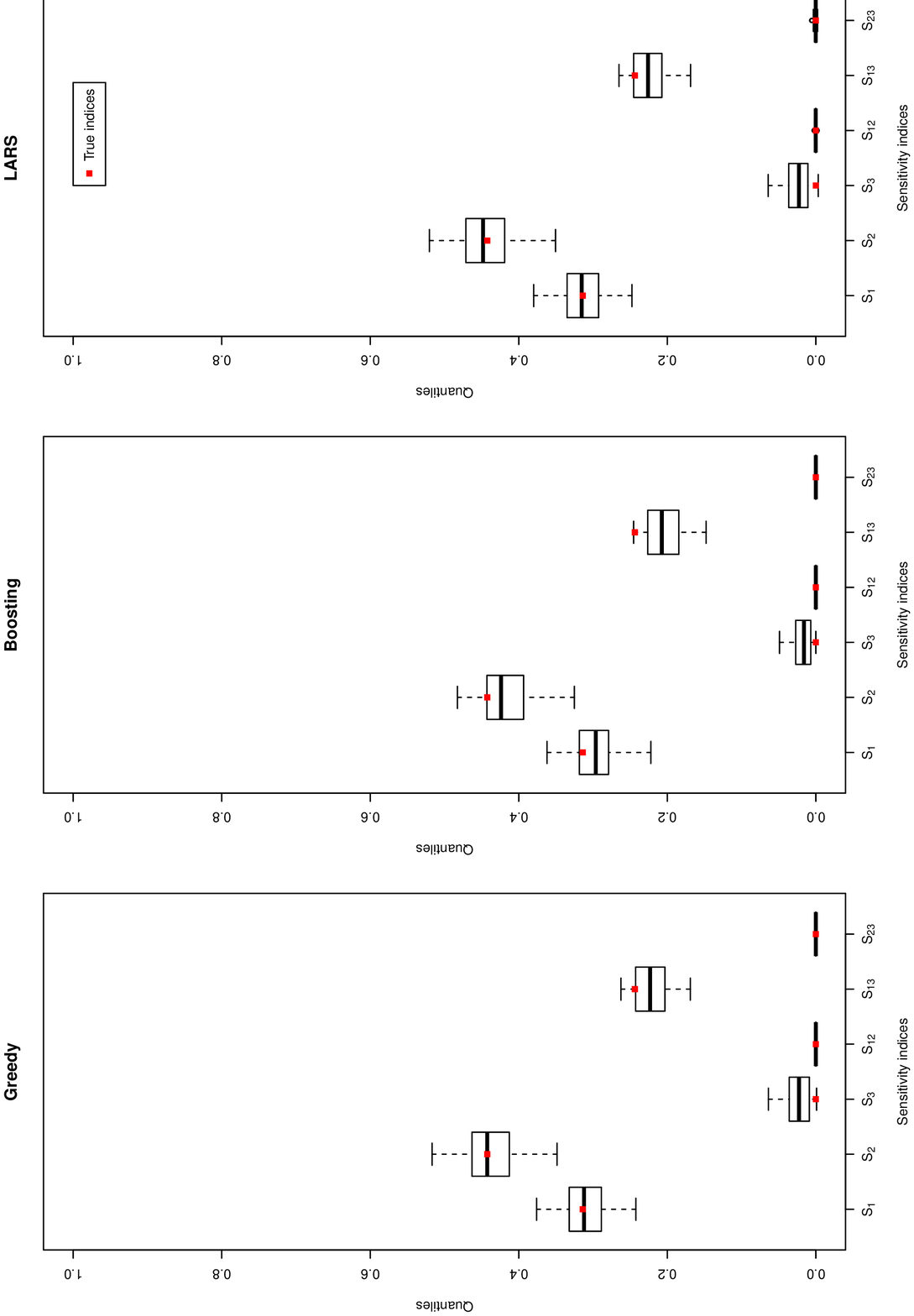}
\caption{Representation of the first-order components on the First Data set (Ishigami function) described through the  Legendre basis}\label{figishiGBL2}
\end{figure}

   \begin{figure}
\centering
 \includegraphics[width=0.6\textwidth, angle=270]
{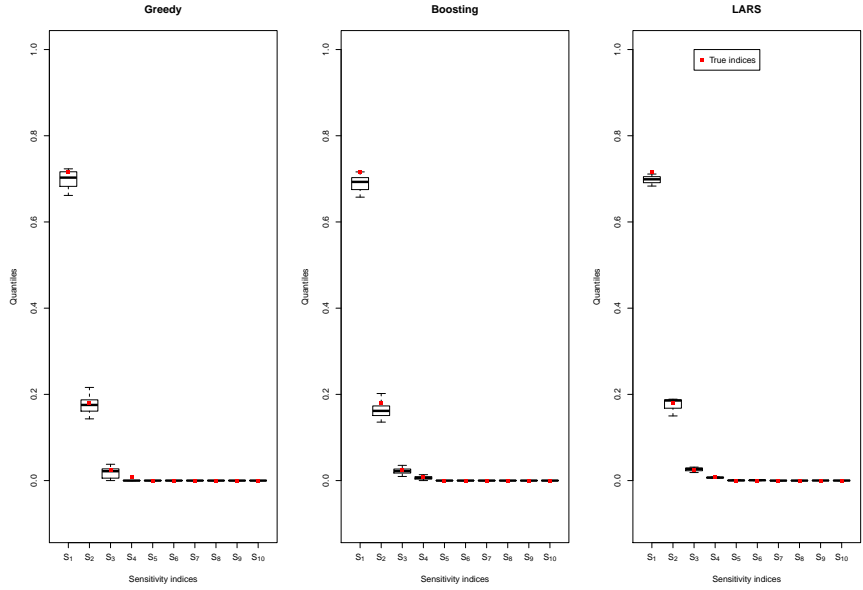}
\caption{Representation of the first-order components on the Second Data set  ($g$-Sobol function)}\label{fig_gsobol}
\end{figure}


  \begin{figure}
\centering
 \includegraphics[width=0.6\textwidth, angle=270]
{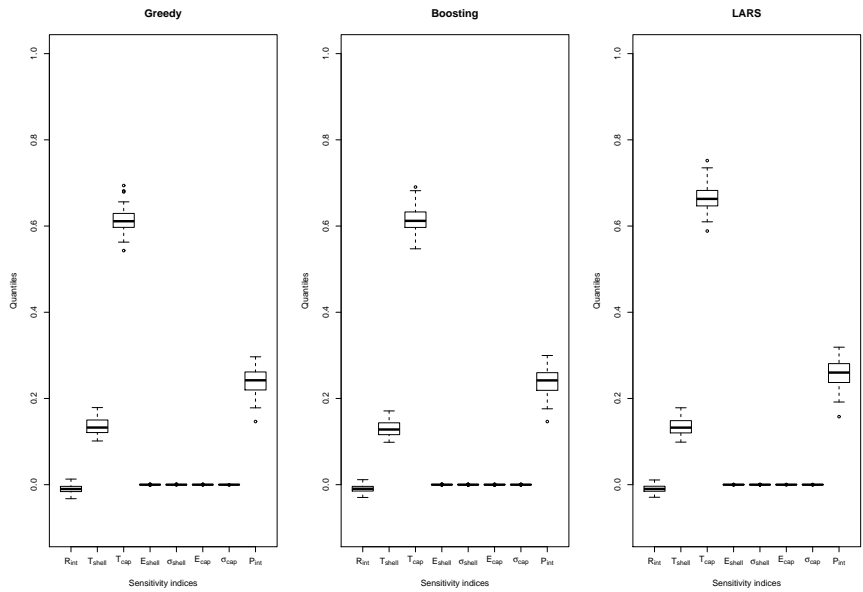}
\caption{Dispersion of the first order sensitivity indices of the tank model parameters}\label{fig_vessel}
\end{figure}

\paragraph{Computation time and accuracy}
We enumerate in Table \ref{tab:perf} the performances of the three methods, according to their computational cost, and accuracy of the feature selection.

 \begin{table}[H]
\centering
\begin{tabular}{|c|c|c|c|}
 \hline
 Data set & Procedure &   $\norm{\hat{\bolds{\beta}}}_0$ & Elapsed Time (in sec.) \\
\hline
\multirow{3}{1.7cm}{Ishigami function Case \ref{step1}} & $\mathbb L_2$-boosting &19   &   $0.0941$  \\
&FoBa &  21 & 2.2917 \\
& LARS &  50 &   53.03    \\
\hline
\multirow{3}{1.7cm}{Ishigami function Case \ref{step2}} & $\mathbb L_2$-boosting & 15   &   $0.0884$   \\
&FoBa &  12 & 1.0752 \\
& LARS &  45 &   23.2062   \\
\hline
\multirow{3}{1.7cm}{$g$-Sobol function } & $\mathbb L_2$-boosting & 7.4   &   $1.0620$ \\
&FoBa &  4.7 & 2.9195  \\
& LARS &   &   $10^3$  \\
\hline
\multirow{3}{1.8cm}{Tank pressure model } & $\mathbb L_2$-boosting & 10   &  $0.0266$  \\
&FoBa & 22 & $0.3741$ \\
& LARS &  10 &   $0.1756$   \\
\hline
\end{tabular}
\caption{Features of the three algorithms}\label{tab:perf}
\end{table}

It clearly appears in Table \ref{tab:perf} that our proposed $\mathbb L_2$-boosting is the fastest method. Also, although we do not have access to the theoretical support recovery $\norm{\bolds{\beta}}_0$, we notice that the $\mathbb L_2$-boosting selects a small number of predictors, and yet performs quite well through the applications. This presumes that the $\mathbb L_2$-boosting is more accurate, as it seems to make a good support recovery.
 The FoBa procedure performances are also very good regarding their ability to obtain a sparse representation and the fraction of additional time required by this last algorithm in comparison with the $\mathbb L_2$-boosting oscillates between two and about ten, or so. At last, the LARS algorithm possesses a somewhat larger computational cost although its performances on our several data sets were quite disappointing.

Note that we have computed the maximal "degeneracy" which is involved in the resolution of the linear systems and quantified by Assumption $\mathbf{(H_{b}^{3,\vartheta})}$ in the column 2 of Table \ref{tab:deg}. In many cases, we obtain a significantly larger value than $0$. The third column of Table \ref{tab:deg} shows the admissible size of the parameter $\vartheta$ and
 we can check that the number of variables $p_n$  allowed by $\mathbf{(H_{b}^{2})}$ and the balance between $\xi$ and $ \vartheta$
 ($\xi$ should be greater than $2 \vartheta$ in our theoretical results)  is not restrictive since $n^{1-2 \vartheta}$ is always significantly greater than $log(m_n)$ in Table \ref{tab:deg}.

 \begin{table}[H]
\centering
\begin{tabular}{|c|c|c|c|c|}
\hline
Data set & Degeneracy $d(A)$& $\vartheta \geq  \frac{\log(1/d(A))}{log(n)}$ & $n^{1-2 \vartheta}$ & $\log(m_n)$ \\
\hline
Ishigami function Case\ref{step1} & $0.6388$ & $ [0.0786,+\infty[$ & $191.6106$ & $6.0113$ \\
Ishigami function Case\ref{step1} & $0.76$ & $ [0.0481,+\infty[$ & $228.0194$ & $6.0113$ \\
$g$-Sobol function & $0.9410$ & $[0.0093,+\infty[$ & $619.6967$ &  $7.0690$ \\
Polynomial function &$0.2736$& $[0.2446,+ \infty[$& $14.9750$ & $5.7991$\\
\hline
\end{tabular}
\caption{Degeneracy of the linear systems and admissible size of $p_n$}\label{tab:deg}
\end{table}

\par

\setcounter{chapter}{1}
{\bf 5. Conclusions and Perspectives}\label{sec:ccl}

This paper brings a rigorous framework for the hierarchically orthogonal Gram-Schmidt procedure in a high-dimensional paradigm, when the greedy $\mathbb L_2$-boosting is used. It also appears that we obtain satisfying numerical results through our three Data sets with a very low computational cost. From a mathematical point of view, assumption $(\mathbf{H_b^1})$ presents a restrictive condition, and to relax it would open a wider class of basis functions for applications. We let this development open for a future work, which may rely either on a development of a concentration inequality for unbounded random matrices or on a truncating argument.\\

\newpage


\setcounter{chapter}{6}
\setcounter{equation}{0} 
\noindent {\bf 6 Appendix}

\noindent {\bf 6.1 Notation and reminder}\label{sec:notation}


Let us first recall some standard notation on matricial norms. For any square matrix $M$, its spectral radius $\rho(M)$ will refer to the largest absolute value of the elements of its spectrum:
$$
\rho(M) := \max_{ \alpha \in Sp(M)} |\alpha |.
$$
Moreover, $\normmat{M}$ is the euclidean endomorphism norm  and is given by
$$
\normmat{M} := \sqrt{\rho(M^t M)},
$$
where $M^t$ is the transpose of $M$. Note that for self-adjoint matrices, $\normmat{M}=\rho(M)$.
At last, the Frobenius norm of $M$ is given by
$$
\|M\|_F := \left( Tr(M^t M) \right)^{1/2}.
$$

\noindent {\bf 6.2 Hoeffding 's type Inequality for random bounded matrices}\label{sec:tropp}
For sake of completeness, we quote here  Theorem 1.3 of~\citet*{tropp11}.
\begin{theor}[Matrix Hoeffding: bounded case]
Consider a finite sequence $(X_k)_{1 \leq k \leq n}$ of independent random self-adjoint matrices with dimension $d$, and let $(A_k)_{1\leq k \leq n}$ a deterministic sequence of self-adjoint matrices. Assume that
$$
\forall 1 \leq k \leq n \qquad \mathbb{E} X_k = 0 \qquad \text{and} \qquad X_k^2 \preceq A_k^2 \quad a.s.
$$
Then, for all $t \geq 0$
$$
P \left(\lambda_{max}  \left(\sum_{k=1}^n X_k \right) \geq t \right) \leq d e^{-t^2/8 \sigma^2}, \qquad \text{where} \qquad \sigma^2 = \|\sum_{k=1}^n A_k^2 \|.
$$
\end{theor}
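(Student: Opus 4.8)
The plan is to follow the \emph{matrix Laplace transform method} of~\citet*{tropp11}. The starting point is a matrix analogue of the Chernoff bound: since $x \mapsto e^{\theta x}$ is increasing for $\theta > 0$ and $e^{\lambda_{max}(\theta Y)} = \lambda_{max}(e^{\theta Y}) \leq \mathrm{Tr}\, e^{\theta Y}$ for any self-adjoint $Y$, Markov's inequality yields, for every $\theta>0$,
$$
P\left(\lambda_{max}\left(\sum_{k=1}^n X_k\right) \geq t\right) \leq e^{-\theta t}\, \E\, \mathrm{Tr}\, \exp\left(\theta \sum_{k=1}^n X_k\right).
$$
The whole difficulty is thus transferred to controlling the trace of the matrix moment generating function of the \emph{sum}, after which one optimizes the free parameter $\theta$.

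First I would establish the subadditivity of the matrix cumulant generating functions. Because the $X_k$ are independent one would like to factorize $\E\,\mathrm{Tr}\,\exp(\theta\sum_k X_k)$ into per-summand contributions, but the matrices do not commute and $e^{A+B}\neq e^A e^B$ in general. The key tool here is \textbf{Lieb's concavity theorem}, which asserts that for a fixed self-adjoint $H$ the map $M \mapsto \mathrm{Tr}\,\exp(H + \log M)$ is concave on the cone of positive definite matrices. Applying Jensen's inequality through this concave map to each $X_k$ in turn (conditioning on the remaining matrices and using independence) gives
$$
\E\,\mathrm{Tr}\,\exp\left(\theta\sum_{k=1}^n X_k\right) \leq \mathrm{Tr}\,\exp\left(\sum_{k=1}^n \log \E\, e^{\theta X_k}\right).
$$

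Next I would invoke a \emph{matrix Hoeffding lemma} to bound each term $\log \E\, e^{\theta X_k}$. Using the two hypotheses $\E X_k = 0$ and $X_k^2 \preceq A_k^2$, a Taylor expansion of $e^{\theta X_k}$ together with operator monotonicity of the scalar function governing the even moments delivers a semidefinite bound of the form $\log \E\, e^{\theta X_k} \preceq g(\theta)\, A_k^2$, where $g(\theta) = c\,\theta^2$ for an absolute constant $c$. Summing these semidefinite inequalities, using the monotonicity of $M\mapsto \mathrm{Tr}\,\exp(M)$ with respect to the order $\preceq$, and the elementary fact that $\mathrm{Tr}\,\exp(S) \leq d\, e^{\lambda_{max}(S)} = d\, e^{\|S\|}$ for a positive semidefinite $S$ of dimension $d$, one arrives at
$$
P\left(\lambda_{max}\left(\sum_{k=1}^n X_k\right) \geq t\right) \leq d \cdot \exp\left(-\theta t + g(\theta)\, \Big\|\sum_{k=1}^n A_k^2\Big\|\right) = d\cdot \exp\left(-\theta t + g(\theta)\,\sigma^2\right).
$$
Minimizing the right-hand side over $\theta>0$ then produces the Gaussian-type tail $d\, e^{-t^2/8\sigma^2}$, the numerical constant $8$ being exactly the one delivered by the matrix Hoeffding lemma of~\citet*{tropp11}. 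I expect the subadditivity step to be the main obstacle, since it genuinely relies on Lieb's concavity theorem; once this deep matrix-analytic input is granted, the remaining steps are the standard Chernoff and optimization mechanics.
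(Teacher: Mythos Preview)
The paper does not actually prove this statement: it merely \emph{quotes} Theorem~1.3 of \citet*{tropp11} verbatim ``for sake of completeness'' and uses it as a black box in the proof of Lemma~\ref{lem:A}. There is therefore no proof in the paper to compare your attempt against.

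That said, your sketch is a faithful outline of Tropp's own argument: the matrix Laplace transform bound, the subadditivity step via Lieb's concavity theorem, the per-summand matrix Hoeffding lemma $\log \E e^{\theta X_k} \preceq c\theta^2 A_k^2$, and the final Chernoff optimization. So your proposal is correct as a proof of the theorem, it simply goes well beyond what the paper itself supplies.
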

In our work, it is useless to use a more precise concentration inequality such as the Bernstein one (see Theorem 6.1 of~\citet*{tropp11}) since we do not consider any asymptotic on $L$ (the number of basis functions for each variables $X^j$). Such asymptotic setting is far beyond the scope of the paper and we let this problem open for a future work.

\noindent {\bf 6.3 Proof of Theorem 1\label{sec:theo1}}


Consider any subset  $u=(u_1,...,u_t) \in S^*$ with $t\geq 1$ and remark that if  $u = \{i\}$, i.e. $t=1$, and $L \geq 1$, we have seen in the \emph{Initialization} of Algorithm 1 that
 \beqe
\vfi=\pfi, \quad \forall~l_i \in \disc 1 {L},
\eeqe
 Therefore, we obviously have that $\sup_{\substack{i \in \disc 1 p\\ l_i\in \disc 1 {L}}} \norm{\vfi-\pfi}=0$.

Now, for $t=2$, let $u=\{i,j\}$, with $i \neq j \in \disc 1 p$, and $\bolds{\bolds{l_{ij}}}=(l_i,l_j) \in \disc 1 L ^2$, remind that $\pfij$ is defined as:

\beqe
\pfij(\xixi,\xjxj)=\pfi(\xixi)\times \pfj(\xjxj) + \sum_{k=1}^L \lambda_{k,\bolds{l_{ij}}}^i \phi_k^i (\xixi) + \sum_{k=1}^L \lambda_{k,\bolds{l_{ij}}}^j \phi_k^j (\xjxj)+ C_{\bolds{\bolds{l_{ij}}}},
\eeqe
where $(C_{\bolds{\bolds{l_{ij}}}},(\lambda_{k,\bolds{l_{ij}}}^i)_k, (\lambda_{k,\bolds{l_{ij}}}^j)_k)$ are given as the solutions of:
\beq\label{syst1}
\begin{array}{l}
\psth{\pfij}{\phi_k^i}=0, \quad \forall~k \in \disc 1 L\\
\psth{\pfij}{\phi_k^j}=0, \quad \forall~k \in \disc 1 L\\
\psth{\pfij}{1}=0.
\end{array}
\eeq

When removing $C_{\bolds{\bolds{l_{ij}}}}$, the resolution of (\ref{syst1}) leads to the resolution of a linear system of the type:
 \beq\label{formemat1}
 A^{ij} \bolds{\lambda}^{\bolds{l_{ij}}}= D^{\bolds{l_{ij}}},
 \eeq
  with $\bolds\lambda^{\bolds{l_{ij}}}=\left(  \lambda_{1,\bolds{l_{ij}}}^i \cdots  \lambda_{L,\bolds{l_{ij}}}^i  \lambda_{1,\bolds{l_{ij}}}^j  \cdots  \lambda_{L,\bolds{l_{ij}}}^j \right)^t$ and 
 \beqe
 A^{ij}=\begin{pmatrix}
 B^{ii} &  B^{ij} \\
 {}^t  B^{ij} &  B^{jj}
 \end{pmatrix}, \quad
 B^{ij}=\begin{pmatrix}
 \psth{\phi_1^i}{\phi_1^j} & \cdots & \psth{\phi_1^i}{\phi_L^j}\\
 \vdots \\
 \psth{\phi_L^i}{\phi_1^j} & \cdots & \psth{\phi_L^i}{\phi_L^j}\\
 \end{pmatrix},\quad 
 D^{\bolds{l_{ij}}}=-\begin{pmatrix}
 \psth{\pfi\times \pfj}{\phi_1^i}\\
 \vdots\\
  \psth{\pfi\times \pfj}{\phi_L^i}\\
  \psth{\pfi\times \pfj}{\phi_1^j}\\
 \vdots\\
  \psth{\pfi\times \pfj}{\phi_L^j}\\
 \end{pmatrix}.
 \eeqe

 Consider now $\vfij$ which is decomposed on the dictionary as follows:
 \beqe
 \barr{lll} 
\vfij(\xixi,\xjxj)
&=& \pfi(\xixi)\times \pfj(\xjxj) + \sum_{k=1}^L \hat\lambda_{k,\bolds{l_{ij}},n_1}^i \phi_k^i (\xixi) \vspace{1em}  + \sum_{k=1}^L \hat\lambda_{k,\bolds{l_{ij}},n_1}^j \phi_k^j (\xjxj)+ \hat C_{\bolds{\bolds{l_{ij}}}}^{n_1},
 \earr
\eeqe
where $(\hat C_{\bolds{\bolds{l_{ij}}}}^{n_1},(\hat\lambda_{k,\bolds{l_{ij}},n_1}^i)_k, (\hat\lambda_{k,\bolds{l_{ij}},n_1}^j)_k)$ are given as solutions of the following \emph{random} equalities:

\beq\label{syst2}
\begin{array}{l}
\psempn{\vfij}{\phi_k^i}=0, \quad \forall~k \in \disc 1 L\\
\psempn{\vfij}{\phi_k^j}=0, \quad \forall~k \in \disc 1 L\\
\psempn{\vfij}{1}=0.
\end{array}
\eeq

When removing $\hat C_{\bolds{\bolds{l_{ij}}}}^{n_1}$, the resolution of (\ref{syst2}) can also lead to the resolution of a linear system of the type:  
 
 \beq\label{formemat2}
 \hat A_{n_1}^{ij} \hat{\bolds{\lambda}}^{\bolds{l_{ij}}}_{n_1}= \hat D_{n_1}^{\bolds{l_{ij}}},
 \eeq
 where $\hat{ \bolds\lambda}^{\bolds{l_{ij}}}_{n_1}= {} \left( \hat\lambda_{1,\bolds{l_{ij}},n_1}^i \cdots \hat \lambda_{L,\bolds{l_{ij}},n_1}^i \hat\lambda_{1,\bolds{l_{ij}},n_1}^j  \cdots \hat \lambda_{L,\bolds{l_{ij}},n_1}^j \right)^t$ and  $\hat A^{ij}_{n_1}$ (\textit{resp.} $\hat D^{\bolds{l_{ij}}}_{n_1}$) are obtained from $A^{ij}$ (\textit{resp.} $D^{\bolds{l_{ij}}}$) by changing the theoretical inner product by its empirical version.
 
 
\begin{rem}
 Remark that $A^{ij}$ depends on $(i,j)$ as well as $\bolds\lambda^{\bolds{l_{ij}}}$ and $D^{\bolds{l_{ij}}}$ depend on $(i,j)$ and $\bolds{l_{ij}}$, but we will deliberately omit these indexes in the sequel for sake of convenience when no confusion is possible. 
 For instance, when a couple $(i,j)$ is handled, we will frequently use the notation $A, \bolds\lambda , D, C, \lambda_{k}^i, \lambda_{k}^j$ instead of $A^{ij}, \bolds\lambda^{\bolds{l_{ij}}}, D^{\bolds{l_{ij}}}, C_{\bolds{l_{ij}}}, \lambda_{k,\bolds{l_{ij}}}^i$ and $ \lambda_{k,\bolds{l_{ij}}}^j$. 
 This will be also the case for the estimators $\hat{A}_{n_1}, \hat{ \bolds\lambda}_{n_1}, \hat{ D}_{n_1},\hat{C}^{n_1}, \hat{\lambda}_{k,n_1}^i$ and $\hat{\lambda}_{k,n_1}^j$.
\end{rem}

Then, the following useful lemma compares the two matrices $\hat A_{n_1}$ and $A$.
\begin{lem}\label{lem:A}
Under Assumption  $\mathbf{(H_{b})}$, and for any $\xi$ given by $\mathbf{(H_{b}^2)}$, one has
$$ \underset{1\leq i,j \leq p_n}{\sup} \normmat{\hat A_{n_1} - A} = \gopxi.$$
\end{lem}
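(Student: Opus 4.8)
The plan is to control each entry of the matrix difference $\hat{A}_{n_1} - A$ by a scalar concentration inequality, then convert the entrywise control into a spectral-norm bound, and finally take a union bound over all $p_n^2$ choices of $(i,j)$. Concretely, each entry of $\hat{A}_{n_1} - A$ has the form $\psempn{\pfk i}{\pf{k'}j} - \psth{\pfk i}{\pf{k'}j}$ (with $i,j$ possibly equal), i.e. an empirical average of $n_1$ i.i.d. centered terms $\pfk i(X^r)\pf{k'}j(X^r) - \E[\pfk i(X)\pf{k'}j(X)]$. Under $\mathbf{(H_b^1)}$ each such term is bounded by $2M^2$ almost surely, so Hoeffding's scalar inequality gives, for fixed $(i,j)$ and fixed indices $(k,k')$,
\[
\mathbb{P}\left( \left| \psempn{\pfk i}{\pf{k'}j} - \psth{\pfk i}{\pf{k'}j} \right| \geq t \right) \leq 2 \exp\left( - \frac{n_1 t^2}{2 M^4} \right).
\]
Since $\hat{A}_{n_1}$ and $A$ are $2L \times 2L$ symmetric matrices, we bound $\normmat{\hat{A}_{n_1} - A} \leq \|\hat{A}_{n_1} - A\|_F \leq 2L \max_{k,k'} |\text{entry}|$, so it suffices to control the maximum entry.

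Next I would choose the deviation level $t = t_n$ so that the union bound over all entries and all pairs $(i,j)$ still tends to zero. There are at most $(2L)^2$ entries per matrix and $p_n^2$ pairs, so the failure probability is at most $4 L^2 p_n^2 \cdot 2 \exp(-n_1 t_n^2 / (2M^4))$. Using $\mathbf{(H_b^2)}$, namely $p_n = \gophyp$ with $0 < \xi \leq 1$, we have $\log p_n = \underset{n\rightarrow+\infty}{\mathcal O}(n^{1-\xi})$, hence $\log(p_n^2) = \underset{n\rightarrow+\infty}{\mathcal O}(n^{1-\xi})$. Taking $t_n$ of order $n^{-\xi/2}$ up to a slowly growing factor (or simply $t_n = c\, n^{-\xi/2}$ with $c$ large; since $n_1 = n/2$ and $n_1 t_n^2 \sim (c^2/2) n^{1-\xi}$ dominates $\log(p_n^2) = \mathcal O(n^{1-\xi})$ once $c$ is large enough) makes the total probability $\to 0$. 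This yields $\sup_{1 \leq i,j \leq p_n} \max_{k,k'} |\cdot| = \gopxi$, and multiplying by the fixed factor $2L$ preserves the rate, giving $\sup_{1 \leq i,j \leq p_n} \normmat{\hat{A}_{n_1} - A} = \gopxi$ as claimed.

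The one point requiring a little care — and the main obstacle — is matching the constant in the exponent against the constant $C$ hidden in $\mathbf{(H_b^2)}$: one must verify that $n_1 t_n^2$ can be made to dominate $\log(4 L^2 p_n^2) + \log 2$ uniformly in $n$ for an appropriate choice of the $\gop$-constant, which is exactly why the bound is stated with the $\gop(n^{-\xi/2})$ notation (boundedness in probability) rather than an almost sure or deterministic rate. Everything else is routine: the entries are bounded i.i.d. averages, Hoeffding applies directly, and the passage from entrywise to Frobenius to operator norm costs only the fixed factor $2L$, which does not affect the rate since we take no asymptotics in $L$. (One could alternatively invoke the Matrix Hoeffding inequality of~\citet*{tropp11} quoted in Section 6.2 directly on $\hat{A}_{n_1} - A$ viewed as a sum of $n_1$ independent bounded self-adjoint perturbations, obtaining the same rate; the scalar-plus-union-bound route is slightly more elementary and I would present that.)
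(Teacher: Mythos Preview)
Your argument is correct, and the route you take is genuinely different from the paper's. The paper does \emph{not} control entries individually: it writes $\hat A_{n_1}-A=\frac{1}{n_1}\sum_{r=1}^{n_1}\Theta_{r,ij}$ as a sum of independent centered self-adjoint matrices, checks the semidefinite bound $\Theta_{r,ij}^2\preceq 16L^2M^4\,\id$ via a Frobenius-norm estimate, and then applies the Matrix Hoeffding inequality of \citet*{tropp11} directly to the spectral radius, followed by a union bound over the $p_n^2$ pairs $(i,j)$ only. Your approach instead applies the scalar Hoeffding inequality to each of the $(2L)^2$ entries, passes from the maximal entry to the operator norm through the Frobenius norm at the cost of a factor $2L$, and takes a union bound over both entries and pairs.

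Both routes yield the same $\gopxi$ rate because $L$ is held fixed in this paper (a point the authors stress explicitly when they remark that Bernstein-type refinements are unnecessary here). Your entrywise argument is more elementary and self-contained, requiring no matrix concentration machinery; the paper's Matrix Hoeffding route is slightly cleaner in that it handles the operator norm in one shot and would degrade more gracefully if one later allowed $L\to\infty$. You already anticipated this alternative in your closing parenthetical, so you are aware of the trade-off; either presentation is acceptable.
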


\begin{proof}
First consider one couple $(i,j)$ and note that $\normmat{\hat{A}_{n_1}-A}=\rho(\hat{A}_{n_1}-A)$, since $\hat{A}_{n_1}-A$ is self-adjoint. To obtain a concentration inequality on the matricial norm  $\normmat{\hat{A}_{n_1}-A }$, we mainly use the results of~\citet*{tropp11}, which give concentration inequalities for the largest eigenvalue of self-adjoint matrices (see section \ref{sec:tropp}).
Denote $\preceq$ the semi-definite order on self-adjoint matrices, which is defined for all self-adjoint matrices $M_1$ and $M_2$ of size $q$ as:
$$M_1\preceq M_2 \ \ \mbox{iff} \ \ \forall u \in \mathbb{R}^q, \ \ {} u^t M_1 u \leq {} u^t M_2 u.$$ 
 
Remark that $\hat A_{n_1}-A$ could be written as follows:
\beqe
\hat A_{n_1}-A=\frac{1}{n_1} \sum_{r=1}^{n_1} \Theta_{r,ij},\quad \Theta_{r,ij}=
\begin{pmatrix}
\Theta_r^{ii} & \Theta_r^{ij}\\
{}^t \Theta_r^{ij} & \Theta_r^{jj}
\end{pmatrix}, ~~\forall~r\in \disc 1 {n_1},
\eeqe
where, for all $k,m \in \disc 1 L$, $(\Theta_r^{i_1 i_2})_{k,m}
=\pf{k}{i_1}(x_{i_1}^r)\pf{m}{i_2}(x_{i_2}^r)-\mathbb E[\pf{k}{i_1}(X_{i_1})\pf{m}{i_{2}}(X_{i_2})]$ with $i_1,i_2 \in \{i,j\}$. Since the observations $(\xx^r)_{r=1,\cdots,n_1}$ are supposed to be independent, $\Theta_{1,ij},\cdots, \Theta_{n_1,ij}$ is a sequence of independent, random, centered, self-adjoint matrices.
Moreover, for all $u\in \mathbb{R}^{2L}$, all $r \in \disc 1 {n_1}$,
\beqe
{} u^t \Theta_{r,ij}^2 u = \eucl{\Theta_{r,ij} u}^2
	\leq  \eucl{u}^2 \Vert \Theta_{r,ij} \Vert_F^2,
\eeqe
where 
\beqe
\barr{lll}
\Vert \Theta_{r,ij} \Vert_F^2 &\leq & (2L)^2  \left(\max_{k,m\in \disc 1 L} |(\Theta_{r,ij})_{k,m}|\right)^2 \\
	&\leq & (2L)^2 \left( \max_{\substack{k,m\in \disc 1 L\\ i_1,i_2\in \{i,j\}}}  |\pf{k}{i_1}(x_{i_1}^r)\pf{m}{i_2}(x_{i_2}^r)-\mathbb E[\pf{k}{i_1}(X_{i_1})\pf{m}{i_2}(X_{i_2})]| \right)^2\\
&\leq & 16L^2 M^4 \quad \textrm{by $\mathbf{(H_{b}^1)}$}.
\earr
\eeqe
We then deduce that each element of the sum satisfies $X_{l,ij}^2 \preceq 16L^2 M^4 \id_{L^2}$, where $\id_{L^2}$ denotes the identity matrix of size $L^2$.

Applying now the Hoeffding's type Inequality stated in Theorem 1.3 of~\citet*{tropp11} to our sequence $\Theta_{1,ij},\cdots, \Theta_{n_1,ij}$, with $\sigma^2=16n_1L^2M^4,$ we then obtain that
\beqe
\forall t \geq 0 \qquad 
P\left( \rho \left( \frac{1}{n_1} \sum_{r=1}^{n_1} \Theta_{r,ij} \right) \geq t\right) \leq 2L e^{-\frac{(n_1t)^2}{8\sigma^2}},
\eeqe

Considering now the whole set of estimators $\hat{A}_{n_1}$, we obtain

\beqe
\forall t \geq 0 \qquad 
P\left(\underset{1\leq i,j \leq p_n}{\sup} \rho \left( \frac{1}{n_1} \sum_{r=1}^{n_1} \Theta_{r,ij} \right) \geq t\right) \leq 2Lp_n^2 e^{-\frac{(n_1t)^2}{8\sigma^2}},
\eeqe

Now, we take $t=\gamma n^{-\xi/2}$, where $\gamma >0$, and $0< \xi \leq 1$ given in  $\mathbf{(H_{b}^2)}$. Then, the following inequality  holds:
\beq \label{eq:concen}
P\left(\underset{1\leq i,j \leq p_n}{\sup} \rho \left( \hat{A}_{n_1} -A \right) \geq \gamma n^{-\xi/2}\right) \leq 2Lp_n^2 e^{-\frac{{n_1}^{1-\xi} \gamma^2}{128 L^2M^4}} .
\eeq
Since $n_1=n/2$, and $p_n=\gophyp$ by Assumption $\mathbf{(H_{b}^2)}$, the right-hand side of the previous inequality becomes arbitrarily small for $n$ sufficiently large and $\gamma >0$ large enough. The end of the proof follows using Inequality (\ref{eq:concen}).
\end{proof}

Similarly, we can show that the estimated quantity $\hat{D}_{n_1}$ is not so far from the theoretical $D$ with high probability.
\begin{lem}\label{lem:D}
Under Assumptions $\mathbf{(H_{b})}$, and for any $\xi$ given by $\mathbf{(H_{b}^2)}$, one has
\beqe
\underset{i,j,\lij}{\sup}  \eucl{\hat D_{n_1}-D} = \gopxi.
\eeqe

\end{lem}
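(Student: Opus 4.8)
The plan is to reproduce the scheme of the proof of Lemma~\ref{lem:A}, but since $\hat D_{n_1}-D$ is a vector of size $2L$ rather than a self-adjoint matrix, the matrix Hoeffding inequality is replaced by the classical scalar Hoeffding inequality for bounded i.i.d.\ variables, followed by a union bound. First I would fix a couple $(i,j)$ and a multi-index $\lij=(l_i,l_j)\in\disc 1 L^2$ and observe that each coordinate of $\hat D_{n_1}-D$ has the form $\frac1{n_1}\sum_{r=1}^{n_1}\xi_r$ with
\[
\xi_r := \phi_{l_i}^i(x_i^r)\,\phi_{l_j}^j(x_j^r)\,\phi_k^{i_2}(x_{i_2}^r) - \mathbb E\!\left[\phi_{l_i}^i(X_i)\,\phi_{l_j}^j(X_j)\,\phi_k^{i_2}(X_{i_2})\right],\qquad i_2\in\{i,j\},\ k\in\disc 1 L .
\]
The $(\xi_r)_r$ are i.i.d.\ and centered, and by $\mathbf{(H_{b}^1)}$ they satisfy $|\xi_r|\leq 2M^3$ almost surely, being (up to the expectation) a product of three functions bounded by $M$.

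Next I would apply Hoeffding's inequality to each coordinate, obtaining $P(|\frac1{n_1}\sum_r\xi_r|\geq s)\leq 2\exp(-n_1 s^2/(8M^6))$, and then note that $\eucl{\hat D_{n_1}-D}\geq t$ forces one of the $2L$ coordinates to exceed $t/\sqrt{2L}$ in absolute value; a union bound over the $2L$ coordinates, over the $p_n^2$ choices of $(i,j)$, and over the $L^2$ choices of $\lij$ then yields
\[
P\!\left(\underset{i,j,\lij}{\sup}\ \eucl{\hat D_{n_1}-D}\geq t\right)\leq 4L^3 p_n^2\exp\!\left(-\frac{n_1 t^2}{16 L M^6}\right).
\]
Finally I would set $t=\gamma n^{-\xi/2}$ with $\xi$ from $\mathbf{(H_{b}^2)}$, use $n_1=n/2$, and invoke $p_n=\gophyp$: for $\gamma$ large enough the factor $\exp(-\gamma^2 n^{1-\xi}/(32LM^6))$ dominates $p_n^2\leq\exp(2Cn^{1-\xi})$, so the probability above tends to $0$, which is precisely $\eucl{\hat D_{n_1}-D}=\gopxi$ uniformly in $(i,j,\lij)$.

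I do not expect a genuine obstacle here: the argument is the scalar counterpart of Lemma~\ref{lem:A}. The only points that require a little care are the value of the almost-sure bound on $\xi_r$ (a triple product gives $2M^3$, not the $M^4$ of the matrix case) and the fact that $D$, unlike $A$, depends on the multi-index $\lij$, so the union bound must also range over $\lij$; since $L$ is kept fixed throughout the paper, this merely adds a polynomial-in-$L$ prefactor and does not affect the exponential control provided by $\mathbf{(H_{b}^2)}$.
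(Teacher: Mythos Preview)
Your proposal is correct and follows essentially the same route as the paper: coordinate-wise concentration for bounded i.i.d.\ summands, followed by a union bound over coordinates, over $(i,j)$, and over $\lij$, then the choice $t=\gamma n^{-\xi/2}$ together with $\mathbf{(H_{b}^2)}$. The only cosmetic differences are that the paper uses Bernstein's inequality in place of Hoeffding's and controls $\eucl{\hat D_{n_1}-D}$ via the $\ell_1$-bound $\eucl{\cdot}\leq \sum_k|\cdot_k|$ rather than your $\ell_\infty$-argument; neither change affects the argument or the resulting rate.
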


\begin{proof} First consider one couple $(i,j)$.
We aim to apply another concentration inequality on $\eucl{\hat D^{\lij}_{n_1}-D^{\lij}}$. Remark that  $\eucl{\hat D_{n_1}-D}$ can be written as:
\beqe
\barr{lll}
\eucl{\hat D_{n_1}-D}&=& \left( \sum_{k=1}^L \left( \psempn{\pfi\times \pfj}{ \phi_k^i}- \psth{\pfi\times \pfj}{\phi_k^i}\right)^2+ \right.\\
&& \left. \sum_{k=1}^L \left( \psempn{\pfi\times \pfj}{ \phi_k^j}- \psth{\pfi\times \pfj}{\phi_k^j}\right)^2 \right)^{1/2}\\
&\leq & \sum_{k=1}^L \left| \frac 1 {n_1}\sum_{r=1}^{n_1} \pfi(\xixi^r) \pfj(\xjxj^r) \phi_k^i(\xixi^r)- \psth{\pfi\times \pfj}{\phi_k^i} \right|+ \\[0.2cm]
&&\sum_{k=1}^L \left| \frac 1 {n_1}\sum_{r=1}^{n_1} \pfi(\xixi^r) \pfj(\xjxj^r) \phi_k^j(\xjxj^r) - \psth{\pfi\times \pfj}{\phi_k^j} \right|.
\earr
\eeqe
Now, Bernstein's Inequality (see~\citet*{Birge98} for instance) implies that, for all $\gamma >0$,
 \beqe
 \barr{lll}
 P\left( n_1^{\xi/2} \eucl{\hat D_{n_1}-D}\geq\gamma\right ) &\leq &  P\left(n_1^{\xi/2}  \sum_{k=1}^L \left| \frac 1 {n_1}\sum_{r=1}^{n_1} \pfi(x_i^r) \pfj(x_j^r) \phi_k^i(x_i^r)- \psth{\pfi\times \pfj}{\phi_k^i} \right| >\gamma/2\right)\\
 & +&  P\left(n_1^{\xi/2}  \sum_{k=1}^L \left| \frac 1 {n_1}\sum_{r=1}^{n_1} \pfi(x_i^r) \pfj(x_j^r)  \phi_k^i(x_i^r)- \psth{\pfi\times \pfj}{ \phi_k^i} \right| >\gamma/2\right)\\
 &\leq & 4L \exp{\left(-\frac 1 8 \frac{\gamma^2 n_1^{1-\xi}}{M^6+M^3 \gamma/6 n_1^{-\xi/2}}\right)},
 \earr
 \eeqe
which gives:
 \beq \label{eq:concen2}
  P\left(\underset{i,j,\lij}{\sup}  \eucl{\hat D_{n_1}-D}\geq\gamma n_1^{-\xi/2} \right ) 
 \leq 4L\times L^2 p_n^2 \exp{\left(- \frac 1 8 \frac{\gamma^2{n_1}^{1-\xi}}{M^{6}+M^3 \gamma/6 {n_1}^{-\xi/2} }\right)}.
 \eeq
Now,  since $n_1=n/2$, Assumption $\mathbf{(H_{b}^2)}$ implies that the right-hand side of Inequality (\ref{eq:concen2}) can also become arbitrarily small for $n$ sufficiently large, which concludes the proof.
\end{proof}

The next lemma then compares the estimated $\incchap$ with $\inc$.

\begin{lem}\label{lem:lambda}
Under Assumptions $\mathbf{(H_{b})}$, we have when $\vartheta < \xi/2$,
$$\underset{i,j,\lij}{\sup} \eucl{\hat{\bolds\lambda}_{n_1} - \bolds\lambda } = \gopxiv.$$
\end{lem}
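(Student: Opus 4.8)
The plan is to compare the solutions of the two linear systems~\eqref{formemat1} and~\eqref{formemat2}, namely $A \inc = D$ and $\hat{A}_{n_1} \incchap = \hat D_{n_1}$, using the perturbation bounds already obtained in Lemmas~\ref{lem:A} and~\ref{lem:D} together with the lower bound on $\det(A^{ij})$ supplied by $\mathbf{(H_{b}^{3,\vartheta})}$. First I would write the standard algebraic identity
\beqe
\incchap - \inc = \hat{A}_{n_1}^{-1}\left( \hat D_{n_1} - D \right) + \left( \hat{A}_{n_1}^{-1} - A^{-1} \right) D,
\eeqe
valid on the event where $\hat{A}_{n_1}$ is invertible, and then further rewrite $\hat{A}_{n_1}^{-1} - A^{-1} = - \hat{A}_{n_1}^{-1}(\hat{A}_{n_1} - A) A^{-1}$, so that everything is expressed in terms of the two ``small'' quantities $\hat A_{n_1}-A$ and $\hat D_{n_1}-D$, the ``large'' quantities $\normmat{A^{-1}}$ and $\eucl{D}$, and the operator norm $\normmat{\hat{A}_{n_1}^{-1}}$ which must be controlled a posteriori.

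Next I would control the deterministic factors uniformly in $(i,j,\bolds{l_{ij}})$. Since $A^{ij}$ is a Gramian built from functions bounded by $M$ (by $\mathbf{(H_{b}^1)}$), its entries are bounded, hence $\normmat{A}$ is bounded by a constant times $L$, uniformly in $(i,j)$; combined with $\det(A^{ij}) \geq C n^{-\vartheta}$ from $\mathbf{(H_{b}^{3,\vartheta})}$ and the cofactor formula for the inverse, this gives $\normmat{A^{-1}} = \underset{n\rightarrow + \infty}{\mathcal O}(n^{\vartheta})$ uniformly in $(i,j)$. Likewise, each entry of $D^{\bolds{l_{ij}}}$ is an inner product of functions bounded by $M$, so $\eucl{D} = \underset{n\rightarrow + \infty}{\mathcal O}(1)$ uniformly. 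Then, on the high-probability event of Lemma~\ref{lem:A} where $\sup_{i,j}\normmat{\hat A_{n_1}-A} = \gopxi$, and since $\vartheta < \xi/2$ makes $n^{\vartheta} \cdot n^{-\xi/2} \to 0$, a Neumann-series (or determinant-continuity) argument shows that $\hat A_{n_1}$ is invertible with $\normmat{\hat A_{n_1}^{-1}} = \gop(n^{\vartheta})$ uniformly, for $n$ large enough and with probability tending to one.

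Finally, plugging these bounds into the identity gives, with probability tending to one and uniformly in $(i,j,\bolds{l_{ij}})$,
\beqe
\eucl{\incchap - \inc} \leq \normmat{\hat A_{n_1}^{-1}} \eucl{\hat D_{n_1} - D} + \normmat{\hat A_{n_1}^{-1}} \normmat{\hat A_{n_1} - A} \normmat{A^{-1}} \eucl{D} = \gop\!\left(n^{\vartheta} n^{-\xi/2}\right) + \gop\!\left(n^{\vartheta} n^{-\xi/2} n^{\vartheta}\right),
\eeqe
and since $\vartheta < \xi/2$ implies $2\vartheta - \xi/2 < \vartheta - \xi/2 < 0 \le \vartheta$... wait, more precisely one keeps the dominant term $\gop(n^{2\vartheta - \xi/2})$, which is $\gop(n^{\vartheta - \xi/2})$ up to the extra factor $n^{\vartheta}$; to land exactly on $\gopxiv$ one must be slightly more careful and observe that the term $\normmat{A^{-1}}\eucl{D}$ contributes $\underset{n\rightarrow + \infty}{\mathcal O}(n^{\vartheta})$, so the second summand is $\gop(n^{\vartheta - \xi/2}) \cdot \mathcal O(n^{\vartheta})$; the claimed rate $\gopxiv$ therefore follows once one absorbs constants and uses $\vartheta < \xi/2$ to guarantee the bound tends to zero. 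I expect the main obstacle to be precisely this bookkeeping of powers of $n$ (and of $L$, treated as a constant): one must verify that the amplification by $\normmat{\hat A_{n_1}^{-1}} = \gop(n^{\vartheta})$ does not destroy the rate, which is exactly why the hypothesis $\vartheta < \xi/2$ is needed, and one must also carefully justify the uniformity over the growing index set $1 \leq i,j \leq p_n$ by re-using the union bound already performed inside Lemmas~\ref{lem:A} and~\ref{lem:D}.
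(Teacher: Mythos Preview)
Your route is essentially the paper's: write a perturbation identity for $\incchap-\inc$, bound $\normmat{A^{-1}}$ via the cofactor formula together with $\mathbf{(H_{b}^{3,\vartheta})}$, and use a Neumann-series argument (under $\vartheta<\xi/2$) to control the perturbed inverse. The paper factors through $A^{-1}$ first and inverts $\id - A^{-1}(A-\hat A_{n_1})$, while you factor through $\hat A_{n_1}^{-1}$; algebraically these are the same computation.

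The place where you lose the rate is the very last bound. From $\hat A_{n_1}^{-1}-A^{-1}=-\hat A_{n_1}^{-1}(\hat A_{n_1}-A)A^{-1}$ you write
\[
\eucl{(\hat A_{n_1}^{-1}-A^{-1})D}\le \normmat{\hat A_{n_1}^{-1}}\,\normmat{\hat A_{n_1}-A}\,\normmat{A^{-1}}\,\eucl{D},
\]
which costs you an extra $n^{\vartheta}$. But $A^{-1}D=\inc$, so the sharper (and equally trivial) bound is
\[
\eucl{(\hat A_{n_1}^{-1}-A^{-1})D}\le \normmat{\hat A_{n_1}^{-1}}\,\normmat{\hat A_{n_1}-A}\,\eucl{\inc}.
\]
This is exactly what appears in the paper's inequality~\eqref{eq:ineg2}, with $\eucl{\inc}$ kept as a fixed (non-random) factor rather than re-expanded through $\normmat{A^{-1}}$. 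With this correction both summands in your display are $\gop(n^{\vartheta})\cdot\gop(n^{-\xi/2})$ and you land directly on $\gopxiv$; the paragraph of hedging about ``absorbing constants'' and the spurious $\gop(n^{2\vartheta-\xi/2})$ term disappear.
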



\begin{proof} Fix any couple $(i,j)$, $\inc$ and $\incchap$ satisfy Equations (\ref{formemat1}) and (\ref{formemat2}). Hence,
  \beqe
  \begin{array}{lcll}
 & A(\incchap-\inc)-A\incchap &=&-D= \hat D_{n_1} -D -\hat D_{n_1}\\
&  &=& (\hat D_{n_1}-D)-\hat A_{n_1} \incchap\\
 \Leftrightarrow & A(\incchap-\inc)&=&  (\hat D_{n_1}-D) +(A-\hat A_{n_1}) \incchap\\
  \Leftrightarrow & \incchap-\inc &=& A^{-1}[(A-\hat A_{n_1}) \incchap ]+ A^{-1}  (\hat D_{n_1}-D),
  \end{array}
  \eeqe
  since the matrix $A$ is  positive definite.
It follows that
\beqe
\incchap-\inc = A^{-1}(A-\hat A_{n_1}) (\incchap - \inc) + A^{-1}(A-\hat A_{n_1}) \inc + A^{-1}  (\hat D_{n_1}-D),
\eeqe
and 
\beq \label{eq:inv1}
\left(\id  - A^{-1}(A-\hat A_{n_1})\right) (\incchap-\inc) = A^{-1}(A-\hat A_{n_1}) \inc + A^{-1}  (\hat D_{n_1}-D),
\eeq

Remark that $\normmat{\hat A_{n_1} -A} = \gopxi$ by Lemma \ref{lem:A}. Hence, with high probability and for $n$ large enough
$\id - A^{-1}(A-\hat A_{n_1}) $ is invertible, and Inequality (\ref{eq:inv1}) can be rewritten as:
\beqe
\incchap-\inc = \left(\id - A^{-1}(A-\hat A_{n_1})\right)^{-1} \left( A^{-1}(A-\hat A_{n_1}) \inc + A^{-1}  (\hat D_{n_1}-D) \right).
\eeqe
We then deduce that, 

\beq \label{eq:ineg2}
\barr{lll}
\eucl{\hat{\bolds\lambda}_{n_1} - \bolds\lambda} &\leq & \normmat{\left(\id - A^{-1}(A-\hat A_{n_1})\right)^{-1}} \\
& & \times  \left( \normmat { A^{-1}[A-\hat A_{n_1}] } \eucl{\inc} + \eucl{A^{-1}  (\hat D_{n_1}-D)} \right)\\[0.4cm]
	&\leq & \normmat{\left(\id - A^{-1}(A-\hat A_{n_1})\right)^{-1}}\\[0.4cm]
	& & \times  \left( \normmat{A^{-1}} \normmat { A-\hat A_{n_1}} \eucl{\inc} + \normmat{A^{-1}} \eucl{\hat D_{n_1}-D} \right).
	\earr
\eeq
%
%
%
%

%
%
%
%
%

A uniform bound for $\normmat{A^{-1}}$ (over all couples $(i,j)$) can be easily obtain since $A$ (and obviously $A^{-1}$) is Hermitian.
$$
\normmat{A^{-1}} \leq \max_{(i',j') \in [1:p_n]^2} \rho\left(\left(A^{i'j'}\right)^{-1}\right)
$$
Simple algebra then yields
$$
\rho\left(\left(A^{i'j'}\right)^{-1}\right) \leq Tr \left(\left(A^{i'j'}\right)^{-1}\right) = \frac{Tr \left( Com(A^{i'j'})^t\right)}{det(A^{i'j'})}   = \frac{1}{det(A^{i'j'})} \sum_{k=1:2L} Com(A^{i'j'})_{k,k}
$$
where $Com(A^{ij})$ is the cofactor matrix associated to $A^{ij}$.
Now, recall the classical inequality (that can be found in \citet*{Bullen}): for any symetric definite positive matrix squared $S$ of size $Q \times Q$
$$
\det(S) \leq \prod_{\ell=1}^Q |S_{\ell\ell}|.
$$
This last inequality applied to the determinant involved in $Com(A^{i'j'})_{k,k}$
associated with $\mathbf{(H_{b}^1)}$  implies
$$
\forall k \in [1:2L] \qquad \left| Com(A^{i'j'})_{k,k}\right| \leq \{M^2\}^{2L-1}.
$$
We then deduce from $\mathbf{(H_{b}^{3,\vartheta})}$  that there exists a constant 
$C>0$ such that:

\beq \label{eq:invA}
\barr{lll}
\normmat{A^{-1}}
	&\leq &  \max_{(i,j) \in [1:p_n]^2} \frac{2L M^{4L-2}}{det(A^{i'j'})} \\
	&\leq & 2 C^{-1} L M^{4L-2} n^{\vartheta}.
\earr
\eeq

Similarly, if we denote $\Delta_{n_1} = A-\hat{A}_{n_1}$, we have
\beqe
\barr{lll}
\normmat{\left( \id -A^{-1}(A-\hat{A}_{n_1}) \right)^{-1}} &=  & \rho \left( \left( I-A^{-1}\Delta_{n_1}  \right)^{-1} \right)\\
	&= &\displaystyle\max_{\alpha \in Sp(A^{-1}\Delta_{n_1} )} \frac{1}{\left|1-\alpha\right|},
\earr
\eeqe
using the fact that $A-\hat{A}_{n_1}$ is self-adjoint.
We have seen that $\rho(A^{-1}) \leq 2  C^{-1} L M^{4L-2} n^{\vartheta}$ and Lemma \ref{lem:A} yields $\rho \left( \Delta_{n_1}\right) =\gopxi$. As a consequence, we have
$$
\displaystyle\max_{\alpha \in Sp(A^{-1}\Delta_{n_1} )} |\alpha| \leq \rho(A^{-1}) \rho\left( \Delta_{n_1}\right) = \gopxiv.
$$
At last, remark that
$$
\displaystyle\max_{\alpha \in Sp(A^{-1}\Delta_{n_1} )} \frac{1}{\left|1-\alpha\right|} - 1 = \displaystyle\max_{\alpha \in Sp(A^{-1}\Delta_{n_1} )} \frac{1-|1-\alpha|}{|1-\alpha|}
$$
We know that for $n$ large enough, each absolute value of $\alpha \in Sp(A^{-1}\Delta_{n_1} )$ becomes smaller than $1/2$ with a probability tending to one. Hence, we have with probability tending to one
$$
\displaystyle\max_{\alpha \in Sp(A^{-1}\Delta_{n_1} )}\left| \frac{1-|1-\alpha|}{|1-\alpha|} \right|\leq \displaystyle\max_{\alpha \in Sp(A^{-1}\Delta_{n_1} )}\frac{|\alpha|}{1-\alpha} \leq 2 \rho(A^{-1}\Delta_{n_1} ).
$$
Since $\rho(A^{-1}\Delta_{n_1} ) = \gopxiv$, we deduce
\beq \label{eq:inv}
\underset{i,j, \lij}{\sup} \normmat{\left( \id -A^{-1}(A-\hat{A}_{n_1}) \right)^{-1}} \leq 1+2LM^{4L-2} C^{-1} \gopxiv.
\eeq

To conclude the proof, we can now apply the same argument as the one  used in  Lemmas \ref{lem:A} and \ref{lem:D} with Bernstein's Inequality, using Equations (\ref{eq:invA}) and (\ref{eq:inv}).
\end{proof}

The last lemma finally compares the constant $\hat C^{n_1}$ with $C$.
\begin{lem}\label{lem:C}
Under Assumptions $\mathbf{(H_{b})}$, we have:
$$\underset{i,j, \lij }{\sup} \abs{ \hat C^{n_1}-C } = \gopxi.$$
\end{lem}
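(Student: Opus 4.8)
The plan is to reduce the statement to two concentration estimates, after first rewriting the two constants in a convenient closed form. Fix a couple $(i,j)$ and a multi-index $\lij$ (which, as in the rest of this section, I drop from the notation). From \eqref{extend} and \eqref{cresoudre} one may write $C=-\E[h(\XX)]$ with
\[
h:=\pfi\otimes\pfj+\sum_{k=1}^{L}\lambda_k^i\,\pf k i+\sum_{k=1}^{L}\lambda_k^j\,\pf k j,
\]
and, from \eqref{syst2} together with the empirical analogue of \eqref{cresoudre}, $\hat C^{n_1}=-\psempn{\hat h}{1}$, where $\hat h$ is obtained from $h$ by replacing $\inc$ with $\incchap$. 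Because the constant function $1$ is one of the elements of the orthonormal system $(\Psi_{l_i}^i)_{l_i}$, one has $\E[\pf k i(\XiXi)]=\psth{\pf k i}{1}=0$ for all $k\in\disc 1 L$, hence $\E[h]=\E[\pfi(\XiXi)\pfj(X_j)]$ and — the identity that will be crucial — $h-\E[h]=\pfij$ in $L^2_{\mathbb R}$. This produces the decomposition
\[
\hat C^{n_1}-C=-\bigl(\psempn{h}{1}-\E[h]\bigr)-\psempn{\hat h-h}{1}=:-\,\mathrm I_{ij}-\mathrm{II}_{ij}.
\]

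The first term $\mathrm I_{ij}=\frac1{n_1}\sum_r h(\xx^r)-\E[h(\XX)]$ I would control by a Bernstein inequality, exactly as in the proofs of Lemmas~\ref{lem:A} and~\ref{lem:D}. The obstacle here is that $\norm{h}_\infty$ is \emph{not} bounded — it is only $\mathcal O(n^{\vartheta})$, through $\eucl{\inc}$ — so a crude argument (e.g.\ bounding $\hat\lambda_{k,n_1}^i\psempn{\pf k i}{1}$ term by term) would only yield the weaker rate $\gopxiv$. The remedy is that the \emph{variance} of $h(\XX)$ is uniformly controlled: $\mathrm{Var}(h(\XX))=\norm{h-\E h}^2=\norm{\pfij}^2$, and since $\pfij$ is the orthogonal projection of $\pfi\otimes\pfj$ onto the orthogonal complement of $\gener{1,\pf 1 i,\dots,\pf L i,\pf 1 j,\dots,\pf L j}$, one has $\norm{\pfij}\le\norm{\pfi\otimes\pfj}\le M^2$ by $\mathbf{(H_{b}^1)}$. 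Bernstein then gives a tail of order $2\exp\!\bigl(-c\,n_1 t^2/(M^4+\norm{h}_\infty t)\bigr)$; with $t=\gamma n^{-\xi/2}$ and $\vartheta<\xi/2$ one has $\norm{h}_\infty t=\mathcal O(n^{\vartheta-\xi/2})\to0$, so the denominator is eventually $\le 2M^4$ and the tail is $\le 2\exp(-c'\gamma^2 n^{1-\xi})$. A union bound over the $L^2 p_n^2$ triples $(i,j,\lij)$ is absorbed by $\mathbf{(H_{b}^2)}$ for $\gamma$ large, so $\sup_{i,j,\lij}|\mathrm I_{ij}|=\gopxi$.

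For the plug-in term I would write $\mathrm{II}_{ij}=\sum_k(\hat\lambda_{k,n_1}^i-\lambda_k^i)\psempn{\pf k i}{1}+\sum_k(\hat\lambda_{k,n_1}^j-\lambda_k^j)\psempn{\pf k j}{1}$ and estimate it by Cauchy--Schwarz,
\[
|\mathrm{II}_{ij}|\le\eucl{\incchap-\inc}\,\Bigl(\textstyle\sum_k\psempn{\pf k i}{1}^2+\sum_k\psempn{\pf k j}{1}^2\Bigr)^{1/2}.
\]
By Lemma~\ref{lem:lambda}, $\sup_{i,j,\lij}\eucl{\incchap-\inc}=\gopxiv$; and since each $\psempn{\pf k i}{1}=\frac1{n_1}\sum_r\pf k i(x_i^r)$ is a centered average of variables bounded by $M$, a Hoeffding bound plus a union bound (again absorbed by $\mathbf{(H_{b}^2)}$) gives $\sup_{i,k}|\psempn{\pf k i}{1}|=\gopxi$. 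Hence $\sup_{i,j,\lij}|\mathrm{II}_{ij}|=\gop(n^{\vartheta-\xi})$, which is $o(\gopxi)$ since $\vartheta<\xi/2$. Adding the two bounds yields $\sup_{i,j,\lij}|\hat C^{n_1}-C|=\gopxi$, which completes the proof. The single delicate point, as explained above, is the treatment of $\mathrm I_{ij}$ through the bounded-variance identity $h-\E h=\pfij$ rather than through $\norm{h}_\infty$.
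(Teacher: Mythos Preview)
Your proof is correct and in fact more careful than the paper's. The paper's argument is shorter: it asserts the closed forms
\[
C=-\psth{\pfi\times\pfj}{1},\qquad \hat C^{n_1}=-\psempn{\pfi\times\pfj}{1},
\]
and then applies Bernstein's inequality directly to the bounded variables $\pfi(x_i^r)\pfj(x_j^r)$, followed by a union bound absorbed by $\mathbf{(H_b^2)}$. The first identity is exactly your observation that $\E[\phi_k^i]=0$; the second one, however, tacitly drops the contributions $\sum_k\hat\lambda_{k,n_1}^i\psempn{\phi_k^i}{1}$ coming from the empirical version of \eqref{cresoudre} (the empirical means $\psempn{\phi_k^i}{1}$ do not vanish). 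In effect the paper treats only your term $\mathrm I_{ij}$, and with the simpler summand $\pfi\pfj$ in place of $h$.

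Your decomposition $\hat C^{n_1}-C=-\mathrm I_{ij}-\mathrm{II}_{ij}$ handles precisely this gap: the plug-in piece $\mathrm{II}_{ij}$ is what the paper suppresses, and you dispose of it via Lemma~\ref{lem:lambda} together with a Hoeffding bound on $\psempn{\phi_k^i}{1}$, obtaining the negligible order $\gop(n^{\vartheta-\xi})$. For the concentration piece $\mathrm I_{ij}$ you also go beyond the paper: since your summand $h$ has $\norm{h}_\infty$ of order $n^{\vartheta}$ (through $\eucl{\inc}$), a crude Bernstein bound would lose a factor $n^{\vartheta}$. The identity $h-\E h=\pfij$ and the projection bound $\norm{\pfij}\le\norm{\pfi\otimes\pfj}\le M^2$ give a uniformly bounded variance, so that in Bernstein's denominator the term $\norm{h}_\infty\, t=\mathcal O(n^{\vartheta-\xi/2})$ is asymptotically negligible and the variance term dominates, recovering the sharp rate $\gopxi$. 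This is a genuinely nice observation; the paper avoids the issue only because it works with $\pfi\pfj$ (bounded by $M^2$) instead of $h$.
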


\begin{proof}
For any couple $(i,j)$, remark that constants $\hat C^{n_1}$ and $C$ satisfy:
$$C = -\psth{\pfi\times \pfj}{ 1} \ \ \mbox{and} \ \ \hat C^{n_1} = -\psempn{\pfi\times \pfj}{1}.$$

If we denote
$$
\Delta_{i,j, \lij } := \frac{1}{n_1}\sum_{r=1}^{n_1} \pfi(\xixi^r) \pfj(\xjxj^r)- \E(\pfi(X_i) \pfj(X_j)),
$$
we can apply again Bernstein's Inequality on  $(\pfi(\xixi^r) \pfj(\xjxj^r))_{r=1, \cdots,n_1}$. From $\mathbf{(H_{b}^1)}$, these 
 independent  random variables are bounded by $M^2$ and
\begin{eqnarray*}
P\left( \underset{i,j, \lij }{\sup} \abs{\Delta_{i,j,\lij} } \geq 
\gamma n_1^{-\xi/2} \right) 
& \leq&
\sum_{i,j,\bolds{l_{ij}}}  P\left( \abs{\Delta_{i,j, \lij }} \geq 
\gamma n_1^{-\xi/2} \right)  \\
& \leq& \sum_{i,j,\bolds{l_{ij}}} 2 \exp  \left( - \frac 1 2 \frac{\gamma^2 n_1^{1-\xi}}{M^4 +  M^2 \gamma/3 n_1^{-\xi/2}}\right)\\
&  \leq & 2 L^2 p_n^2  \exp  \left( - \frac 1 2 \frac{\gamma^2 n_1^{1-\xi}}{M^4 +  M^2 \gamma/3 n_1^{-\xi/2}}\right).
 \end{eqnarray*}

Under Assumption $\mathbf{(H_{b}^2)}$, the right-hand side of this inequality can be arbitrarly small for $n$ large enough, which ends the proof.
\end{proof}

To finish the proof of Theorem 1, remark that:
 
 \beqe
 \begin{array}{lll}
 \norm{\vfij-\pfij} &=& \norm{\sum_{k=1}^L ( \hat\lambda_{k,n_1}^i - \lambda_{k}^i) \phi_k^i + \sum_{k=1}^L ( \hat\lambda_{k,n_1}^j - \lambda_{k}^j) \phi_k^j + (\hat C^{n_1}-C)}\\
 &\leq & \underbrace{\norm{\sum_{k=1}^L ( \hat\lambda_{k,n_1}^i - \lambda_{k}^i) \phi_k^i + \sum_{k=1}^L ( \hat\lambda_{k,n_1}^j - \lambda_{k}^j) \phi_k^j }}_{I}+\abs{ \hat C^{n_1}-C}.
 \end{array}
 \eeqe
Moreover,  
 
 \beqe
 \begin{array}{lll}
 I^2 &=&  \int \left( \sum_{k=1}^L ( \hat\lambda_{k,n_1}^i - \lambda_{k}^i) \phi_k^i + \sum_{k=1}^L ( \hat\lambda_{k,n_1}^j - \lambda_{k}^j) \phi_k^j \right)^2 p_{X_i,X_j}(\xixi,\xjxj)dx_idx_j\\
 &=& \underbrace{\int \left(\sum_{k=1}^L ( \hat\lambda_{k,n_1}^i - \lambda_{k}^i) \phi_k^i \right)^2 p_{X_i}(\xixi)d x_i}_{I_1}+\underbrace{\int \left(\sum_{k=1}^L ( \hat\lambda_{k,n_1}^j - \lambda_{k}^j) \phi_k^j \right)^2 p_{X_j}(\xjxj)d x_j}_{I_2}\\
 &&+ \underbrace{2   \int \left(\sum_{k=1}^L ( \hat\lambda_{k,n_1}^i - \lambda_{k}^i) \phi_k^i \right) \left(\sum_{k=1}^L ( \hat\lambda_{k,n_1}^i - \lambda_{k}^i) \phi_k^i \right) p_{X_i,X_j}(\xixi,\xjxj)d \xixi d \xjxj}_{I_3}.
 \end{array}
 \eeqe

 Using the inequality $2ab\leq a^2+b^2$, we thus deduce that $I_3 \leq I_1+ I_2$, and
 
 \beqe
 \begin{array}{lll}
 I_1 &=& \int \sum_{k=1}^L \sum_{m=1}^L ( \hat\lambda_{k,n_1}^i - \lambda_{k}^i) ( \hat\lambda_{m,n_1}^i - \lambda_{m}^i) \phi_k^i(\xixi) \phi_m^i(\xixi) p_{X_i}(\xixi)d x_i\\
 &=&  \sum_{k=1}^L  ( \hat\lambda_{k,n_1}^i - \lambda_{k}^i)^2 \quad\textrm{by orthonormality}.
 \end{array}
 \eeqe
 
And the same equality is satisfied for $I_2$: 
$
 I_2=\sum_{k=1}^L  ( \hat\lambda_{k,n_1}^j - \lambda_{k}^j)^2. 
$

Consequently, we obtain
  
  \beq\label{ineg}
  \begin{array}{lll}
  \norm{\vfij-\pfij} &\leq & \sqrt{ 2 \left[ \sum_{k=1}^L  ( \hat\lambda_{k,n_1}^i - \lambda_{k}^i)^2 +\sum_{k=1}^L  ( \hat\lambda_{k,n_1}^j - \lambda_{k}^j)^2  \right]} + \abs{ \hat C^{n_1}-C }\\
  &= & \sqrt{2} \eucl{\hat{\bolds\lambda}_{n_1} - \bolds\lambda}+ \abs{ \hat C^{n_1}-C }.
  \end{array}
  \eeq

The end of the proof follows with Lemmas \ref{lem:lambda} and \ref{lem:C}.   

\hfill $\square$

\noindent {\bf 6.4 Proof of Theorem 2\label{sec:theo2}}

We recall first that $\psth{}{}$ denotes the theoretical inner product based on the law $\PX$ (and $\norm{}$ is the derived Hilbertian norm). A careful inspection of the Gram-Schmidt procedure used to build the HOFD shows that
$$
M^* := 
\underset{u,\bolds{l_u}}{\sup} \norm{\pfu(\XuXu)}_{\infty} < \infty,
$$
provided that $\mathbf{(H_{b}^1)}$ holds.

 Now, remark that the EHOFD is obtained through the first sample ${\cal O}_1$ which determines the first empirical inner product $\psempn{}{}$ although the $\mathbb{L}^2$-boosting depends on the second sample ${\cal O}_2$.  Indeed, ${\cal O}_2$  determines the second empirical inner product $\psempnn{}{}$. Hence, $\psempnn{}{}$ uses observations which are \textit{independent} to the ones used to build the HOFD.

We begin this section with a lemma which establishes that the estimated functions $\vfu$ (which result in the EHOFD) are bounded.
\begin{lem}\label{lem:max}
 Under Assumption $\mathbf{(H_{b})}$, define
$$N_{n_1}:=\underset{u,\bolds{\bolds{l_u}}}{\sup} \norm{\vfu(\XuXu)}_{\infty}.$$
Then, we have:
$$
N_{n_1} - M^* = \gopxiv.
$$
\end{lem}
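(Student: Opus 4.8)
The plan is to deduce Lemma~\ref{lem:max} from the bounds already obtained in Lemmas~\ref{lem:lambda} and~\ref{lem:C}, by first controlling the sup-norm distance between $\vfu$ and $\pfu$ uniformly in $(u,\bolds{l_u})$ (recall we work under the standing assumption $d=2$, so $|u|\leq 2$). For singletons $u=\{i\}$ there is nothing to prove: by the \emph{Initialization} of Algorithm~\ref{EHOFD} one has $\vfi=\pfi$ exactly, hence these terms contribute $0$. For a pair $u=\{i,j\}$, writing the two decompositions of $\vfij$ and $\pfij$ on the common deterministic family $\{\phi_k^i\}_{k}\cup\{\phi_k^j\}_{k}\cup\{1\}$, the leading tensor term $\pfi\otimes\pfj$ cancels and one is left with
\beqe
\vfij-\pfij=\sum_{k=1}^L(\hat\lambda_{k,n_1}^i-\lambda_k^i)\phi_k^i+\sum_{k=1}^L(\hat\lambda_{k,n_1}^j-\lambda_k^j)\phi_k^j+(\hat C^{n_1}-C).
\eeqe

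Next I would take the supremum norm, bound every $\norm{\phi_k^i}_{\infty}$ and $\norm{\phi_k^j}_{\infty}$ by $M$ thanks to $\mathbf{(H_{b}^1)}$, and apply Cauchy--Schwarz to the $2L$-dimensional vector of coefficient errors, which yields
\beqe
\norm{\vfij-\pfij}_{\infty}\leq M\sqrt{2L}\,\eucl{\incchap-\inc}+\abs{\hat C^{n_1}-C}.
\eeqe
Since $L$ and $M$ are fixed constants independent of $(i,j)$ and $\bolds{l_{ij}}$, Lemma~\ref{lem:lambda} gives $\sup_{i,j,\bolds{l_{ij}}}\eucl{\incchap-\inc}=\gopxiv$ and Lemma~\ref{lem:C} gives $\sup_{i,j,\bolds{l_{ij}}}\abs{\hat C^{n_1}-C}=\gopxi$; as $\vartheta\geq 0$ the latter is a fortiori a $\gopxiv$, so that $\sup_{u,\bolds{l_u}}\norm{\vfu-\pfu}_{\infty}=\gopxiv$.

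Finally I would pass from this uniform bound on the differences to a bound on $N_{n_1}-M^*$. For every $(u,\bolds{l_u})$ the reverse triangle inequality gives $\abs{\norm{\vfu(\XuXu)}_{\infty}-\norm{\pfu(\XuXu)}_{\infty}}\leq\norm{\vfu-\pfu}_{\infty}$, and taking the supremum over $(u,\bolds{l_u})$ on both sides (the supremum being $1$-Lipschitz, i.e.\ a sup of differences dominates the difference of the sups, which are finite with probability tending to one) yields $\abs{N_{n_1}-M^*}\leq\sup_{u,\bolds{l_u}}\norm{\vfu-\pfu}_{\infty}=\gopxiv$, which is exactly the claim, and in particular $N_{n_1}<\infty$ with probability tending to one. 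No step here is genuinely delicate once the previous lemmas are available; the only mild points requiring care are that the constants $M$ and $L$ entering the bound do not depend on $u$ or $\bolds{l_{ij}}$ (so that the control survives the supremum over the exponentially many index sets allowed by $\mathbf{(H_{b}^2)}$), and that the $n^{-\xi/2}$ rate of Lemma~\ref{lem:C} is dominated by the slightly slower $n^{\vartheta-\xi/2}$ rate of Lemma~\ref{lem:lambda}.
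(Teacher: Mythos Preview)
Your proof is correct and follows essentially the same route as the paper: write $\vfij-\pfij$ as a linear combination of the $\phi_k^i,\phi_k^j$ and the constant, bound the coefficients pointwise via Cauchy--Schwarz and $\mathbf{(H_b^1)}$, and conclude with Lemmas~\ref{lem:lambda} and~\ref{lem:C}. Your version is in fact more carefully written than the paper's (which omits the reverse triangle inequality step and carries what looks like a spurious extra square root on $\eucl{\incchap-\inc}$).
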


\begin{proof}
Using the decomposition of $\vfu$ on the dictionary, Assumption $\mathbf{(H_{b}^2)}$ and Cauchy-Schwarz Inequality, there exists a fixed constant $C>0$ such that for all $u \in S$, $\bolds{l_{u}}$:
$$
\forall x \in \mathbb{R}^p \qquad 
|\vfu(x) - \pfu(x)| \leq C M \sqrt{L} \sqrt{\eucl{\incchap-\inc}}+\norm{ \hat C_{\bolds{l_{u}}}^{n_1}-C_{\bolds{l_{u}}}}.
$$
The conclusion then follows using Lemmas \ref{lem:lambda} and \ref{lem:C}.
\end{proof}

We now present a key lemma which compares the elements $(\pfu)_{\bolds{\bolds{l_u}},u}$ with its estimated version $(\vfu)_{\bolds{\bolds{l_u}},u}$. 
\begin{lem}\label{lembernstein}
Assume that $\mathbf{(H_{b})}$ holds with $\xi \in (0,1)$, that the noise $\varepsilon$ satisfies $\mathbf{(H_{\varepsilon,q})}$ with $q>4/\xi$ and that $\mathbf{(H_{s})}$ is fullfilled. Then, the following equalities hold,

\begin{enumerate}[(i)]
\item \label{itemi} 
$$
\sup_{u,v,\bolds{\bolds{l_u}},\bolds{l_v}}|\psth{\vfu}{\vfv}-\psth{\pfu}{\pfv}|= \zeta_{n,1}=\gopxiv
$$
\item \label{itemii} 
$$\sup_{u,v,\bolds{\bolds{l_u}},\bolds{l_v}}|\psempnn{\vfu}{\vfv}-\psth{\pfu}{\pfv}|= \zeta_{n,2}=\gopxiv$$
\item \label{itemiii}
$$\sup_{u,v,\bolds{\bolds{l_u}},\bolds{l_v}}|\psempnn{\varepsilon}{\vfu}|=\zeta_{n,3}=\gopxi$$
\item \label{itemiv}
$$\underset{u,\bolds{\bolds{l_u}}}{\sup} \abs{\psempnn{\tilde{f}}{\vfu} - \psth{\tilde{f}}{\vfu}} = \zeta_{n,4} =\gopxi$$
\end{enumerate}


In the sequel, we will denote  $\zeta_n:=\max_{i\in \disc 0 4}\{ \zeta_{n,i} \}$.
\end{lem}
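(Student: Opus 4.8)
\textbf{Proof plan for Lemma \ref{lembernstein}.}

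The plan is to derive each of the four estimates from the previously established control on the EHOFD error, namely Theorem \ref{theo:base}, which guarantees $\sup_{u,\bolds{l_u}}\norm{\vfu-\pfu}=\zeta_{n,0}=\gopxiv$, together with the uniform boundedness provided by Lemma \ref{lem:max} ($N_{n_1}-M^*=\gopxiv$, so in particular $N_{n_1}$ is bounded in probability). Throughout, I would fix a couple of multi-indices and argue uniformly at the end using the crude union bound already exploited in Lemmas \ref{lem:A}--\ref{lem:C}: the number of pairs $(u,v,\bolds{l_u},\bolds{l_v})$ with $|u|,|v|\le d$ grows only polynomially in $p_n$ times a fixed power of $L$, hence is $\underset{n\to+\infty}{\mathcal O}(\exp(Cn^{1-\xi}))$ by $\mathbf{(H_{b}^2)}$, and the exponential concentration terms below absorb it for $n$ large.

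For item \eqref{itemi}, I would write the telescoping identity
\[
\psth{\vfu}{\vfv}-\psth{\pfu}{\pfv}=\psth{\vfu-\pfu}{\vfv}+\psth{\pfu}{\vfv-\pfv},
\]
then bound each term by Cauchy--Schwarz using $\norm{\vfu-\pfu}\le\zeta_{n,0}$ and $\norm{\vfv}\le\norm{\pfv}+\zeta_{n,0}$, with $\norm{\pfv}$ bounded by $M^*$ (finite under $\mathbf{(H_{b}^1)}$). This gives $\zeta_{n,1}=\gopxiv$ directly. For item \eqref{itemii} I would split
\[
\psempnn{\vfu}{\vfv}-\psth{\pfu}{\pfv}=\bigl(\psempnn{\vfu}{\vfv}-\psth{\vfu}{\vfv}\bigr)+\bigl(\psth{\vfu}{\vfv}-\psth{\pfu}{\pfv}\bigr).
\]
The second bracket is $\zeta_{n,1}$ from \eqref{itemi}. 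The first bracket is the deviation, over the \emph{second} sample ${\cal O}_2$, of an empirical mean of the random variables $\vfu(\XuXu)\vfv(\XvXv)$ from their expectation; conditionally on ${\cal O}_1$ these functions are fixed and bounded by $N_{n_1}^2$, which is $\gop(1)$, so Bernstein's inequality applied conditionally gives that this term is $\gopxi$. Here the independence of ${\cal O}_1$ and ${\cal O}_2$ (stressed in the paper) is essential: it lets me treat $\vfu,\vfv$ as deterministic when applying the concentration bound on ${\cal O}_2$. Since $n_2=n/2$, combining yields $\zeta_{n,2}=\gopxiv$. Items \eqref{itemiii} and \eqref{itemiv} are of the same flavour: in \eqref{itemiii}, $\psempnn{\varepsilon}{\vfu}=\frac1{n_2}\sum_s\varepsilon^s\vfu(\xuxu^s)$ is, conditionally on ${\cal O}_1$, a centered empirical mean (using $\E(\varepsilon)=0$ and independence of $\varepsilon$ and $\XX$), and $\vfu$ is bounded by $N_{n_1}$; because $\varepsilon$ is only assumed to have a finite moment of order $q>4/\xi$ (not bounded), I would use a truncation argument on $\varepsilon^s$ at level $n^{1/q}$ combined with Bernstein on the truncated part and a Markov bound on the tail, the condition $q>4/\xi$ being exactly what makes the tail contribution negligible compared with $n^{-\xi/2}$. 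In \eqref{itemiv}, $\psempnn{\tilde f}{\vfu}-\psth{\tilde f}{\vfu}$ splits as the ${\cal O}_2$-deviation of $\tilde f\vfu$ (bounded, conditionally on ${\cal O}_1$, by $M^*M^*\cdot(\text{const})$ through $\mathbf{(H_{s})}$ and $\mathbf{(H_{b}^1)}$) plus a term $\psth{\tilde f}{\vfu-\pfu}$ controlled via Cauchy--Schwarz and $\norm{\tilde f}\le\|\bolds\beta^0\|_{L^1}M^*<\infty$; Bernstein again gives $\gopxi$ for the first piece and $\zeta_{n,0}=\gopxiv$ for the second, and since $\vartheta<\xi/2$ the order $n^{-\xi/2}$ dominates, so $\zeta_{n,4}=\gopxi$.

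The main obstacle I anticipate is the bookkeeping in item \eqref{itemiii}: the noise is unbounded, so one cannot directly invoke the bounded Bernstein inequality, and one must carefully balance the truncation level against both the uniformity over the exponentially many indices $(u,\bolds{l_u})$ (requiring an exponentially small tail from the Bernstein part) and the polynomial Markov bound on the truncation remainder; getting the exponents to line up is precisely where the hypothesis $q>4/\xi$ enters and must be used with care. A secondary subtlety is making the conditioning argument rigorous: all Bernstein bounds are applied conditionally on ${\cal O}_1$, and one must note that the resulting (conditional) probability bounds are uniform in the realization of ${\cal O}_1$ because the bounding constants depend on ${\cal O}_1$ only through $N_{n_1}$, which itself is $\gop(1)$ by Lemma \ref{lem:max}; integrating out ${\cal O}_1$ then yields the unconditional $\gop$ statements. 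Once these four estimates are in hand, setting $\zeta_n:=\max_{i\in\disc 04}\{\zeta_{n,i}\}=\gopxiv$ completes the proof.
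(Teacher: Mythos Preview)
Your plan is essentially the paper's own: items \eqref{itemi}--\eqref{itemii} via the same telescoping plus Bernstein (the paper phrases the conditioning as splitting on the event $\{N_{n_1}<M^*+1\}$ rather than ``condition on ${\cal O}_1$ then integrate'', but this is the same device), and item \eqref{itemiii} via truncation of $\varepsilon$ plus Bernstein on the bounded part. Your truncation level $n^{1/q}$ differs from the paper's $K_n=n^{\xi/4}$; both work under $q>4/\xi$, but be aware that after truncation the centered-mean property is lost, so a small bias term $\psth{\vfu}{\varepsilon_t}$ must also be controlled (the paper does this explicitly as its ``III'' term).

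There is, however, a slip in your treatment of \eqref{itemiv}. The quantity $\psempnn{\tilde f}{\vfu}-\psth{\tilde f}{\vfu}$ is \emph{already} a pure ${\cal O}_2$-deviation of the (conditionally bounded) function $\tilde f\,\vfu$; no additional term $\psth{\tilde f}{\vfu-\pfu}$ arises. More importantly, your claim that ``since $\vartheta<\xi/2$ the order $n^{-\xi/2}$ dominates'' is backwards: for $\vartheta>0$ one has $n^{\vartheta-\xi/2}\gg n^{-\xi/2}$, so a decomposition that produces a $\gopxiv$ piece would force $\zeta_{n,4}=\gopxiv$, not $\gopxi$ as the lemma asserts. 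The fix is simply to drop your extra term and apply Bernstein directly (or, as the paper does, expand $\tilde f=\sum_{v,\bolds{l_v}}\beta_{\bolds{l_v}}^{v,0}\pfv$ and bound by $\|\bolds\beta^0\|_{L^1}\sup_{v,\bolds{l_v}}|\psempnn{\pfv}{\vfu}-\psth{\pfv}{\vfu}|$, then Bernstein), which yields the correct $\gopxi$.
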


\begin{proof}

\textbf{Assertion $(\ref{itemi})$} 
Let $u,v \in S$, $\bolds{\bolds{l_u}}\in\disc 1 L^{|u|}$ and $\bolds{l_v} \in \disc 1 L^{|v|}$. Then, we have 
 \beqe
 \barr{lll}
 \abs{\psth{\vfu}{\vfv}-\psth{\pfu}{\pfv}} 
&\leq& \abs{ \psth{\vfu - \pfu}{\vfv}-\psth{\pfu}{\pfv- \vfv}}\\[0.2cm]
 &\leq& \norm{\vfu-\pfu} \norm{ \vfv}+\norm{\pfu} \norm{ \vfv-\pfv}\\[0.2cm]
 &\leq &  \norm{\vfu-\pfu}  \left(\norm{\vfv -\pfv}+ 1\right )+ \norm{ \vfv-\pfv},
 \earr
 \eeqe
 and the conclusion holds applying Theorem 1.
  
\textbf{Assertion $(\ref{itemii})$} We breakdown it in two parts: 
  \beqe
 \barr{lll}
 \abs{\psempnn{\vfu}{\vfv}-\psth{\pfu}{\pfv}} &\leq & \underbrace{\abs{\psempnn{\vfu}{\vfv}-\psth{\vfu}{\vfv}} }_{I}\\[0.2cm] & & +\underbrace{ \abs{\psth{\vfu}{\vfv}-\psth{\pfu}{\pfv}}}_{II}.
  \earr
 \eeqe
 
Assertion $(\ref{itemi})$ implies that,
$$\underset{u,v,\bolds{\bolds{l_u}},\bolds{l_v}}{\sup} |II| =\gopxiv.$$
To control $\underset{u,v,\bolds{\bolds{l_u}},\bolds{l_v}}{\sup} |I|$, we use Bernstein's inequality to the family of independent random variables $\left( \vfu(\xuxu^s) \vfv(\xvxv^s)  \right)_{s=1...n_2}$ and we denote
$$
\Delta_{u,v,\lu,\bolds{l_v}}= \abs{\frac{1}{n_2} \sum_{s=1}^{n_2} \vfu(\xuxu^s) \vfv(\xvxv^s) - \mathbb{E}( \vfu(\XuXu) \vfv(\XvXv) )}
$$
Then, Bernstein's inequality implies that
\begin{eqnarray*}
 P\left( \underset{u,v,\bolds{\bolds{l_u}},\bolds{l_v}}{\sup} \Delta_{u,v,\bolds{\bolds{l_u}},\bolds{l_v}}\geq \gamma n_2^{-\xi/2} \right)  & \leq & 
 P\left( \underset{u,v,\bolds{\bolds{l_u}},\bolds{l_v}}{\sup} \Delta_{u,v,\bolds{\bolds{l_u}},\bolds{l_v}}\geq \gamma n_2^{-\xi/2} \& N_{n_1} < M^*+1\right)  \\
 & &  +  P\left( \underset{u,v,\bolds{\bolds{l_u}},\bolds{l_v}}{\sup} \Delta_{u,v,\bolds{\bolds{l_u}},\bolds{l_v}}\geq \gamma n_2^{-\xi/2} \& N_{n_1} > M^*+1\right)  \\
 & \leq &   64 L^4 p_n^4 \exp \left( - \frac 1 2 \frac{\gamma^2 {n_2}^{1-\xi}}{(M^*+1)^4 + (M^*+1)^2 \gamma/3 {n_2}^{-\xi/2})} \right) \\ & &+ P\left( N_{n_1} > M^*+1\right) 
\end{eqnarray*}

Lemma \ref{lem:max} and Assumption $\mathbf{(H_{b}^2)}$ allows for deducing  $(\ref{itemii})$.

\textbf{Assertion $(\ref{itemiii})$}  The proof follows the roadmap of $(\ref{itemii})$ of Lemma 1 of ~\citet*{buhlmann06}. We thus define the truncated variable $\varepsilon_{t}$ for all $s\in \disc 1 {n_2}$,
\beqe
\varepsilon^s_{t}=\left\{
\begin{array}{ll}
\varepsilon^s & \text{if }~|\varepsilon^s | \leq K_n\\
sg(\varepsilon^s) K_n &  \text{if }~|\varepsilon^s| > K_n
\end{array}
\right.
\eeqe

where $sg(\varepsilon)$ denotes the sign of $\varepsilon$. Then, for $\gamma>0$, we have:

\beqe
\begin{array}{lll}
P\left(n_2^{\xi/2} \underset{u,\bolds{\bolds{l_u}}}{\sup}\abs{\psempnn{\vfu}{\varepsilon}} > \gamma \right) & \leq & P\left(n_2^{\xi/2}  \underset{u,\bolds{\bolds{l_u}}}{\sup} \abs{\psempnn{\vfu}{\varepsilon_t} - \psth{\vfu}{\varepsilon_t}}>\gamma/3\right) \\
&&+ P\left(n_2^{\xi/2}  \underset{u,\bolds{\bolds{l_u}}}{\sup}\abs{\psempnn{\vfu}{\varepsilon-\varepsilon_t}} >\gamma/3\right) \\
&& + P\left(n_2^{\xi/2}  \underset{u,\bolds{\bolds{l_u}}}{\sup}\abs{\psth{\vfu}{\varepsilon_t}}>\gamma/3\right) \\
&=& I+II+III
\end{array}
\eeqe
\underline{Term $II$:}
We can bound $II$ using the following simple inclusion:
\begin{eqnarray*}
\left\{n_2^{\xi/2}  \underset{u,\bolds{\bolds{l_u}}}{\sup} \abs{\psempnn{\vfu}{\varepsilon_t} - \psth{\vfu}{\varepsilon_t}}>\gamma/3\right\}
&\subset &\left\{\text{there exists s such that} \,  \varepsilon^s - \varepsilon_{t}^s \neq 0  \right\}\\
& = & \left\{\text{there exists s such that} \, | \varepsilon^s| > K_n  \right\}\\
\end{eqnarray*}

Hence, 
\begin{eqnarray*}
II&\leq& P(\text{some}~\abs{\varepsilon^s}> K_n )\\
& \leq&  n_2 P(\abs \varepsilon > K_n)\leq n_2 K_n^{-q} \E(\abs \varepsilon ^q) =  \underset{n\rightarrow +\infty}{\mathcal{O}}(n^{1-q\xi/4}), \\
\end{eqnarray*}
where $n_2=n/2$ and we have chosen $K_n := n^{\xi/4}$ since $q>4/ \xi$ by  Assumption of the Lemma. Hence, $II$ can become arbitrarily small.\\

\underline{Term $I$:}
Using again Bernstein's Inequality to the family of independent random variables $(\vfu(\xuxu^s) \varepsilon_t^s )_{s=1, \cdots, n_2}$ and considering the two  events  
$\{N_{n_1}>M^*+1\}$ and $\{N_{n_1}<M^*+1\}$, we can also show that:
\beqe
I \leq 2 L p_n \exp\left( - \frac 1 2 \frac{(\gamma^2/9) {n_2}^{1-\xi}}{(M^*+1)^4 \sigma^2 + (M^*+1)K_n \gamma/9 {n_2}^{-\xi/2}} \right)+ P(N_{n_1}>M^*+1) ,
\eeqe
where $\sigma^2 := \E(\abs{\varepsilon}^2).$ 
We can then make the right-hand side of the previous inequality arbitrarily small owing to $\mathbf{(H_{b}^2)}$  with $K_n=n^{\xi/2}$.

 \underline{Term $III$:} by assumption, $\E(\pfu(\XuXu)\varepsilon)=0$. We then have:
 
\beqe
\begin{array}{lll}
III &\leq&  P\left(n_2^{\xi/2} \underset{u,\bolds{\bolds{l_u}}}{\sup}\abs{\E[ (\vfu-\pfu)(\XuXu) \varepsilon_t] }>\gamma/6\right)+ P\left(n_2^{\xi/2}\underset{u,\bolds{\bolds{l_u}}}{\sup} \abs{\E[ \pfu(\XuXu) (\varepsilon-\varepsilon_t)] }>\gamma/6\right)\\
&=& III_1 + III_2,
\end{array}
\eeqe
with,
\beqe
\begin{array}{lll}
III_1 &=&  P\left( n_2^{\xi/2} \underset{u,\bolds{\bolds{l_u}}}{\sup}  \abs{\E[ (\vfu-\pfu)(\XuXu)]} \abs{\E(\varepsilon_t) }>\gamma/6\right)\\
&\leq & P\left(n_2^{\xi/2}\underset{u,\bolds{\bolds{l_u}}}{\sup}  \abs{\E[ (\vfu-\pfu)(\XuXu)]} \abs{\E(\varepsilon_t) }>\gamma/6\right)\\
&\leq &  \mathds 1_{\{n_2^{\xi/2} \underset{u,\bolds{\bolds{l_u}}}{\sup}  \abs{\E[ (\vfu-\pfu)(\XuXu)]} \abs{\E(\varepsilon_t)} >\gamma/6\}}\\
\end{array}
\eeqe
Moreover, one has
\beq\label{bounderror}
\begin{array}{lll}
\abs{\E(\varepsilon_t)} &=& \abs{\int_{\abs x \leq K_n} x \dd \Pep(x)+ \int_{\abs x > K_n} sg(x) K_n \dd \Pep(x)}
=\abs{  \int_{\abs x > K_n} (sg(x) K_n -x) \dd \Pep(x)} \\[0.2cm]
&\leq & \int \mathds 1_{\abs x >K_n} (K_n+ \abs x) \dd \Pep(x)\\[0.2cm]
&\leq & K_n \Pep(\abs \varepsilon > K_n) + \int \abs x \mathds 1_{\abs x >K_n} \dd \Pep(x)\\[0.2cm]
&\leq & K_n^{1-t} \E(\abs \varepsilon ^t)+ \E(\varepsilon^2)^{1/2} K_n^{-t/2} \E(\abs \varepsilon ^t)^{1/2} \quad \text{by  the Tchebychev Inequality }\\[0.2cm]
&\leq &  O(K_n^{1-t})+  O(K_n^{-t/2})=o(K_n^{-2})
\end{array}
\eeq

since $0<\xi< 1$ and $t >4/\xi>4$.  Then, set $K_n=n^{\xi/4}$, we obtain:
\beqe
n_2^{\xi/2} \norm{ \vfu-\pfu}  \abs{\E(\varepsilon_t)}  \leq   n_2^{\xi/2} o(1) o(n^{-\xi/2}) = o(1),
\eeqe
when $o$ is the usual Landau notation of relative insignificance.

Hence, $III_1=0$ for $n$ large enough. For $III_2$, one has

\beqe
III_2 \leq \mathds 1_{\{ n_2^{\xi/2}\underset{u,\bolds{\bolds{l_u}}}{\sup} \abs{\E[ \pfu(\XuXu) (\varepsilon-\varepsilon_t)] }>\gamma/6  \}},
\eeqe
and, by independance, 

\beqe
\abs{\E[ \pfu(\XuXu) (\varepsilon-\varepsilon_t)] }=\abs{\E[ \pfu(\XuXu)]}\abs{ \E(\varepsilon-\varepsilon_t )} \leq M^* \abs{ \E(\varepsilon-\varepsilon_t )}. 
\eeqe

Equation (\ref{bounderror}) then implies,

\beqe
\abs{\E(\varepsilon-\varepsilon_t)} = \abs{  \int_{\abs x > K_n} (sg(x) K_n -x) \dd \Pep(x)} \leq o(K_n^{-2})= o(n^{-\xi/2})
\eeqe

Thus, $III$ is arbitrarily small for $n$ and $\gamma$ large enough and  $(\ref{itemiii})$ holds.

\textbf{Assertion $(\ref{itemiv})$}
Remark that,
\beqe
\underset{u,\bolds{\bolds{l_u}}}{\sup} \abs{\psempnn{\tilde{f}}{\vfu} - \psth{\tilde{f}}{\vfu}} \leq \betasumm \underset{u,\bolds{\bolds{l_u}}}{\sup}  \abs{\psempnn{\pfv}{\vfu}-\psth{\pfv}{\vfu}}.
\eeqe
Now, $\mathbf{(H_{s})}$ and  Bernstein's Inequality implies
\begin{eqnarray*}
P\left( \underset{u,\bolds{\bolds{l_u}}}{\sup}  \abs{\psempnn{\pfv}{\vfu}-\psth{\pfv}{\vfu}} \geq \gamma n_2^{-\xi/2} \right) &\leq & P(N_{n_1}>M^*+1)  \\ 
+2 L p_n \exp \left( - \frac 1 2 \frac{\gamma^2 {n_2}^{1-\xi}}{(M^*+1)^4 +  (M^*+1)^2  \gamma/3 {n_2}^{-\xi/2}} \right),&&\\
\end{eqnarray*}
which implies with Assumption $\mathbf{(H_{b}^2)}$ that:
$$\underset{u,\bolds{\bolds{l_u}}}{\sup}  \abs{\psempnn{\pfv}{\vfu}-\psth{\pfv}{\vfu}} = \gopxi.$$

 \end{proof}
 
 The following lemma, similar to Lemma 2 from~\citet*{buhlmann06}, then holds:
  \begin{lem} \label{lemme:lemmeBuhl}
Under Assumptions $\mathbf{(H_{b})}$, $\mathbf{(H_{\varepsilon,q})}$ with $q > 4/\xi$  and $\mathbf{(H_{s})}$, there exists a constant $C>0$ such that, on the set $\Omega_n=\{\omega, |\zeta_n(\omega)|<1/2\} $:
$$ \underset{u,\bolds{\bolds{l_u}}}{\sup} |\psempnn { Y- G_{k}(\bar f)}{\vfu} - \psth{\tilde{R}_{k}(\bar{f})}{\pfu} | 
\leq C \left( \frac{5}{2} \right) ^{k} \zeta_n. $$
\end{lem}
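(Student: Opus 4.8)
My plan is to prove the statement by induction on $k$, comparing the empirical boosting residual $Y-G_k(\bar f)$ with the \emph{phantom} residual $\tilde R_k(\bar f)$, and using the four concentration estimates of Lemma~\ref{lembernstein} (bundled as $\zeta_n$) on the event $\Omega_n$ where $|\zeta_n|<1/2$. The base case $k=0$ is $\psempnn{Y}{\vfu}-\psth{\tilde f}{\pfu}$, which splits as $\psempnn{\varepsilon}{\vfu}$ (controlled by $(\ref{itemiii})$) plus $\psempnn{\tilde f}{\vfu}-\psth{\tilde f}{\vfu}$ (controlled by $(\ref{itemiv})$) plus $\psth{\tilde f}{\vfu-\pfu}$ (controlled by Theorem~\ref{theo:base} and $\mathbf{(H_s)}$), so this is at worst $C\zeta_n$. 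For the inductive step I would write, using the update rules \eqref{hatg} and \eqref{phantom},
\[
Y-G_k(\bar f) = \bigl(Y-G_{k-1}(\bar f)\bigr) - \gamma\,\psempnn{Y-G_{k-1}(\bar f)}{\vfs}\,\vfs,
\qquad
\tilde R_k(\bar f)=\tilde R_{k-1}(\bar f)-\gamma\,\psth{\tilde R_{k-1}(\bar f)}{\vfs}\,\vfs,
\]
so that
\[
\psempnn{Y-G_k(\bar f)}{\vfu}-\psth{\tilde R_k(\bar f)}{\pfu}
= \Bigl[\psempnn{Y-G_{k-1}(\bar f)}{\vfu}-\psth{\tilde R_{k-1}(\bar f)}{\pfu}\Bigr] - \gamma\Bigl[\psempnn{Y-G_{k-1}(\bar f)}{\vfs}\psempnn{\vfs}{\vfu}-\psth{\tilde R_{k-1}(\bar f)}{\vfs}\psth{\vfs}{\pfu}\Bigr].
\]

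The first bracket is exactly the quantity at step $k-1$, so by the induction hypothesis it is bounded by $C(5/2)^{k-1}\zeta_n$. For the second bracket I would add and subtract cross terms: replace $\psempnn{\vfs}{\vfu}$ by $\psth{\vfs}{\vfu}$ and then by $\psth{\pfs}{\pfu}$ (errors controlled by $(\ref{itemii})$ and $(\ref{itemi})$, hence $O(\zeta_n)$), and replace $\psempnn{Y-G_{k-1}(\bar f)}{\vfs}$ by $\psth{\tilde R_{k-1}(\bar f)}{\pfs}$ (error bounded by the induction hypothesis, $C(5/2)^{k-1}\zeta_n$). Along the way I need uniform boundedness of the relevant inner products: $|\psth{\pfs}{\pfu}|\le (M^*)^2$ by the $L^\infty$ bound on the $\pfu$, $\norm{\tilde R_{k-1}(\bar f)}$ stays bounded (the phantom boosting is nonexpansive since $\gamma\in(0,1]$ and the dictionary elements are bounded), and $|\psempnn{Y-G_{k-1}(\bar f)}{\vfs}|$ is bounded on $\Omega_n$ because it is the maximal empirical correlation and the empirical norms are close to the theoretical ones. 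Collecting terms, the step-$k$ error is at most $C(5/2)^{k-1}\zeta_n + \gamma\cdot[\text{const}\cdot C(5/2)^{k-1}\zeta_n + \text{const}\cdot\zeta_n]$; choosing the constants so that the multiplicative blow-up factor is $\le 5/2$ closes the induction and yields $C(5/2)^k\zeta_n$.

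The main obstacle, as usual in these boosting consistency arguments, is bookkeeping the geometric growth: one must verify that all the "constants" multiplying $C(5/2)^{k-1}\zeta_n$ (coming from $\gamma\le 1$, $\norm{\tilde R_{k-1}(\bar f)}\le \norm{\bar f}$, $|\psth{\pfs}{\pfu}|\le (M^*)^2$, and the boundedness of the maximal empirical correlation) sum to something strictly less than $5/2$ after the common factor $C$ is factored out, so that the recursion $a_k \le a_{k-1} + \gamma(c_1 a_{k-1} + c_2\zeta_n)$ actually yields $a_k \le C(5/2)^k\zeta_n$; a slightly delicate point is that the maximal empirical correlation $|\psempnn{Y-G_{k-1}(\bar f)}{\vfs}|$ must itself be controlled on $\Omega_n$, which I would do by bounding it by $|\psth{\tilde R_{k-1}(\bar f)}{\pf{\cdot}{\cdot}}| + a_{k-1}$ (again invoking the induction hypothesis) and noting $|\psth{\tilde R_{k-1}(\bar f)}{\pfu}|\le \norm{\tilde R_{k-1}(\bar f)}\,M^* \le \norm{\bar f}\,M^*$, which is $O(1)$ under $\mathbf{(H_s)}$. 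Everything else is routine and follows the template of Lemma~2 in~\citet*{buhlmann06}.
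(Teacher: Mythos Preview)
Your plan is correct and follows the same inductive template as the paper's proof. The two differ only in how the recursive step is decomposed, and the paper's version is cleaner in exactly the place you flag as ``delicate''.

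Write $A_n(k,u):=\psempnn{Y-G_k(\bar f)}{\vfu}-\psth{\tilde R_k(\bar f)}{\pfu}$. After expanding with \eqref{hatg} and \eqref{phantom}, the paper groups the correction terms as
\[
A_n(k,u)=A_n(k-1,u)-\gamma\,I+\gamma\,II+\gamma\,III,
\]
with
\[
I=\bigl(\psempnn{Y-G_{k-1}(\bar f)}{\vfs}-\psth{\tilde R_{k-1}(\bar f)}{\pfs}\bigr)\,\psempnn{\vfs}{\vfu},
\]
\[
II=\psth{\tilde R_{k-1}(\bar f)}{\pfs}\bigl(\psth{\vfs}{\pfu}-\psempnn{\vfs}{\vfu}\bigr),\qquad
III=\psth{\tilde R_{k-1}(\bar f)}{\vfs-\pfs}\,\psth{\vfs}{\pfu}.
\]
The point of this grouping is that the first factor of $I$ is \emph{exactly} $A_n(k-1,u_k)$, so $|I|\le(1+\zeta_n)\sup_{u,\lu}|A_n(k-1,u)|$ directly, using only $|\psempnn{\vfs}{\vfu}|\le|\psth{\pfs}{\pfu}|+\zeta_n\le 1+\zeta_n$ (the paper uses $\norm{\pfu}=1$, not $(M^*)^2$). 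Terms $II$ and $III$ are bounded by $\norm{\bar f}\cdot O(\zeta_n)$ via $\norm{\tilde R_{k-1}(\bar f)}\le\norm{\bar f}$. On $\Omega_n$ this yields the clean recursion $\sup_u|A_n(k,u)|\le\frac{5}{2}\sup_u|A_n(k-1,u)|+3\norm{\bar f}\zeta_n$, and the factor $5/2$ falls out immediately as $1+\gamma(1+\zeta_n)\le 1+\tfrac32$.

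Your alternative---replacing $\psempnn{\vfs}{\vfu}$ first and then $\psempnn{Y-G_{k-1}(\bar f)}{\vfs}$---forces you to control the raw empirical correlation $|\psempnn{Y-G_{k-1}(\bar f)}{\vfs}|$ on its own, which then feeds back into the recursion and makes the constant bookkeeping you worry about genuinely awkward (and using $(M^*)^2$ in place of $1$ makes it worse). The paper's grouping simply sidesteps this.

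One small correction to your base case: $\tilde R_0(\bar f)=\bar f$, not $\tilde f$, so $A_n(0,u)=\psempnn{Y}{\vfu}-\psth{\bar f}{\pfu}$. The paper handles this by inserting the extra term $|\psth{\tilde f-\bar f}{\vfu}|$, which is again $O(\zeta_n)$ by Theorem~\ref{theo:base} and $\mathbf{(H_s)}$.
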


\begin{proof}
Denote
$ A_n(k,u)=\psempnn { Y- G_{k}(\bar{f})}{\vfu} - \psth{\tilde{R}_{k}(\bar{f})}{\pfu}$. 
Assume first that $k=0$,

\begin{eqnarray*}
\underset{u,\bolds{\bolds{l_u}}}{\sup}|A_n(0,u)|&=&\underset{u}{\sup}|\psempnn { Y}{\vfu} - \psth{\bar{f}}{\pfu}|  \\
&\leq & \underset{u,\bolds{\bolds{l_u}}}{\sup} \left\{ \abs{ \psempnn{\tilde f}{\vfu} - \psth{\tilde f}{\vfu}} + \abs{ \psth{\tilde f-\bar f}{\vfu}  }  + \abs{ \psth{\bar f}{ \vfu-\pfu}  }\right\}\\
&& +\underset{u,\bolds{\bolds{l_u}}}{\sup} \abs{\psempnn{\varepsilon}{\vfu}}\\
&\leq & (3+  \norm{\bar{f}})\zeta_n \quad \textrm{by (\ref{itemiii})-(\ref{itemiv}) of Lemma \ref{lembernstein} and Theorem 1 }
    \end{eqnarray*}
From the main document, we remind that
\beq\label{hatg}
 G_k(\bar f)= G_{k-1}(\bar f)+\gamma \psempnn { Y- G_{k-1}(\bar f)}{\vfs} \cdot \vfs,
\eeq
\beq\label{theorres}
\barr{lll}
R_k(\bar f)&=&\bar f- G_k(\bar f)\\
&=& \bar f- G_{k-1}(\bar f)-\gamma \psempnn { Y- G_{k-1}(\bar f)}{\vfs} \cdot \vfs
\earr
\eeq
and
\beq\label{phantom}
\left\{
\barr{l}
\tilde R_0(\bar f)=\bar f\\
\tilde R_k(\bar f)=\tilde R_{k-1}(\bar f)-\gamma \psth{\tilde R_{k-1}(\bar f)}{\vfs} \vfs.
\earr
\right.
\eeq
From the recursive relations (\ref{hatg}) and (\ref{phantom}), for any $ k \geq 0$, we obtain:
\begin{eqnarray*}
A_n(k,u) & = & \psemp { Y- G_{k-1}(\bar{f})-\gamma \psempnn { Y- G_{k-1}(\bar{f})}{\vfs} \cdot \vfs}{\vfu} \\
      & & - \psth{\tilde{R}_{k-1}(\bar{f}) - \gamma \psth{\tilde{R}_{k-1}(\bar{f})}{\vfs} \vfs}{\pfu} \\
      &\leq &  A_n(k-1,u) \\ 
      & &  - \gamma \underbrace{\left(\psempnn { Y- G_{k-1}(\bar{f})}{\vfs}- \psth{\tilde{R}_{k-1}(\bar{f})}{\pfs} \right)\psempnn{\vfs}{\vfu}}_{I}\\
      & & + \gamma \underbrace{\psth{\tilde{R}_{k-1}(\bar{f})}{\pfs}\left( \psth{\vfs}{\pfu} - \psempnn{\vfs}{\vfu} \right)}_{II} \\
      & & + \gamma \underbrace{\psth{\tilde{R}_{k-1}(\bar{f})}{\vfs-\pfs} \psth{\vfs}{\pfu}}_{III}.
      \end{eqnarray*}
On the one hand, using assertion $(\ref{itemii})$ of Lemma \ref{lembernstein}, and the Cauchy-Schwarz inequality (with $\norm{\pfu}=1$), it comes
\begin{eqnarray}
\underset{u,\bolds{\bolds{l_u}}}{\sup} |I| & \leq &  \underset{u,\bolds{\bolds{l_u}}}{\sup} | \psempnn{\vfs}{\vfu}| \underset{u,\bolds{\bolds{l_u}}}{\sup} |A_n(k-1,u)| \nonumber\\
                                          & \leq & (\underset{u,\bolds{\bolds{l_u}}}{\sup} |\psth{\pfs}{\pfu}| + \zeta_n) \underset{u,\bolds{\bolds{l_u}}}{\sup} |A_n(k-1,u)|\nonumber\\
                                          & \leq & (1 + \zeta_n ) \underset{u,\bolds{\bolds{l_u}}}{\sup} |A_n(k-1,u)|. \nonumber
\end{eqnarray}
Consider now the phantom residual, from its recursive relation, we can show that $ \norm{ \tilde{R}_k(\bar{f})}^2 = 
\norm{\tilde{R}_{k-1}(\bar{f})}^2 - \gamma (2-\gamma) \psth{\tilde{R}_{k-1}(\bar{f})}{\vfs}^2 \leq \norm{\tilde{R}_{k-1}(\bar{f})}^2 $ and we deduce
\beq\label{rtilde}
\norm{ \tilde{R}_k(\bar{f})}^2 
\leq  \norm{\bar{f}}^2. 
\eeq
Then,
\begin{eqnarray*}
\underset{u,\bolds{\bolds{l_u}}}{\sup} |II| &\leq& \norm{\tilde{R}_{k-1}(\bar{f}) } \norm{\pfs} \underset{u,\bolds{\bolds{l_u}}}{\sup} | \psth{\vfs}{\pfu} - \psempnn{\vfs}{\vfu}|\\
  &\leq&   \norm{\bar{f}} \underset{u,\bolds{\bolds{l_u}}}{\sup} | \psth{ \vfs}{\pfu} - \psempnn{\vfs}{\vfu}|,
\end{eqnarray*}
with
\begin{eqnarray*}
 | \psth{ \vfs}{\pfu} - \psempnn{\vfs}{\vfu}| &\leq& | \psempnn{\vfs}{\vfu} - \psth{\pfs}{\pfu}|  \\
 & & + | \psth{\pfs-\vfs}{\pfu} |.
\end{eqnarray*}
Using again assertion $(\ref{itemii})$ from Lemma \ref{lembernstein} and Theorem 1, we obtain the following bound for II,
\begin{eqnarray*}
 \underset{u,\bolds{\bolds{l_u}}}{\sup} |II| &\leq&  \norm{\bar{f}} (\zeta_n +   \underset{u,\bolds{\bolds{l_u}}}{\sup}  \norm{\pfu-\vfu})\\
      &\leq& 2 \zeta_n  \norm{\bar{f}}.
\end{eqnarray*}
Finally,  Theorem 1 gives
\begin{eqnarray*}
\underset{u,\bolds{\bolds{l_u}}}{\sup} |III| &\leq& \underset{u,\bolds{\bolds{l_u}}}{\sup} \norm{\tilde{R}_{k-1}(\bar{f})} \norm{\vfs-\pfs} \norm{\vfs} \norm{ \pfu}\\
    &\leq &  \norm{\bar{f}} \zeta_n.
\end{eqnarray*}
Our bounds on $ I $, $ II $ and $ III $, 
and $ \gamma <1 $ yields on $ \Omega_n=\{ \zeta_n<1/2 \}$ that
\begin{eqnarray*}
\underset{u,\bolds{\bolds{l_u}}}{\sup} |A_n(k,u)| & \leq & \underset{u,\bolds{\bolds{l_u}}}{\sup} |A_n(k-1,u)| +(1+\zeta_n)\underset{u,\bolds{\bolds{l_u}}}{\sup} |A_n(k-1,u)|+3\zeta_n  \norm{\bar{f}}\\
	                                          & \leq & \frac{5}{2} \underset{u,\bolds{\bolds{l_u}}}{\sup} |A_n(k-1,u)|  + 3\zeta_n \norm{\bar{f}}.
\end{eqnarray*}
A simple induction yields:
\begin{eqnarray*}
\underset{u,\bolds{\bolds{l_u}}}{\sup} |A_n(k,u)| & \leq &  \left( \frac{5}{2} \right)^k \underbrace{\underset{u,\bolds{\bolds{l_u}}}{\sup} |A_n(0,u)|}_{\leq (3+ \norm{\bar{f}}) \zeta_n} + 3 \zeta_n  \norm{\bar{f}} \sum_{\ell=0}^{k-1} \left(\frac{5}{2}\right)^{\ell}\\
	      & \leq & \left(\frac{5}{2}\right)^k \zeta_n \left(3+ \betasumm
	      \left(1+ 3\sum_{\ell=1}^{\infty} \left(\frac{5}{2}\right)^{-\ell}\right) \right),
\end{eqnarray*}
which ends the proof with $ C=3+\betasumm \left(1+ 3\sum_{\ell=1}^{\infty} \left(\frac{5}{2}\right)^{-\ell}\right)$.

\end{proof}


We then aim at applying Theorem 2.1 from~\citet*{champion} to the phantom residuals $(\tilde R_k(\bar{f}))_k$. Using the notation of~\citet*{champion}, this will be possible if we can show that the phantom residuals follows a theoretical boosting with a shrinkage parameter $\nu \in [0,1]$. Thanks to Lemma \ref{lemme:lemmeBuhl} and by definiton
of $ \vfs $, one has
\begin{align}
|\psempnn {Y-G_{k-1}(\bar{f})}{\vfs} | & = 
\underset{u,\bolds{\bolds{l_u}}}{\sup} |\psempnn { Y- G_{k-1}(\bar{f})}{\vfu} | \nonumber \\
                                                                   & \geq 
\underset{u,\bolds{\bolds{l_u}}}{\sup} \left\{ |\psth{\tilde{R}_{k-1}(\bar{f})}{\pfu} |-C\left( \frac{5}{2} \right)^{k-1} \zeta_n \right\}. \label{k1}
\end{align}

Applying again Lemma \ref{lemme:lemmeBuhl} on the set $\Omega_n$, we obtain:
\begin{align}
|\psth{\tilde{R}_{k-1}(\bar{f})}{\pfs} | & \geq 
|\psempnn {Y-G_{k-1}(\bar{f})}{\vfs} | - 
C\left( \frac{5}{2} \right)^{k-1} \zeta_n \nonumber \\
                                                   & \geq 
\underset{u,\bolds{\bolds{l_u}}}{\sup} |\psth{\tilde{R}_{k-1}(\bar{f})}{\pfu} | - 
2C \left( \frac{5}{2} \right)^{k-1} \zeta_n. \label{l1}
\end{align}
Consider now the set $ \tilde{\Omega}_n=\left\{\omega, \quad \forall k \leq k_n, \quad  \underset{u,\bolds{\bolds{l_u}}}
{\sup} |\psth{\tilde{R}_{k-1}(\bar{f})}{\pfu} | > 4 C \left( \frac{5}{2} \right)^{k-1} \zeta_n\right\} $. 
We deduce from Equation (\ref{l1}) the following inequality on $\Omega_n \cap \tilde{\Omega}_n$:
\begin{equation} \label{pl}
|\psth{\tilde{R}_{k-1}(\bar{f})}{\pfs} | \geq 
\frac{1}{2}\underset{u,\bolds{\bolds{l_u}}}{\sup} |\psth{\tilde{R}_{k-1}(\bar{f})}{\pfu}|.
\end{equation}
Consequently, on $ \Omega_n\cap \tilde{\Omega}_n$, the family
$ (\tilde{R}_k(\bar f))_k $ satisfies a theoretical boosting, given by Algorithm 1 of~\citet*{champion}, with constant $\nu=1/2$ and we have: 
\begin{equation} \label{omega}
\norm{\tilde{R}_k(\bar{f})} \leq C' \left( 1+ \frac{1}{4} \gamma ( 2-\gamma)k\right)^{-\frac{2-\gamma}{2(6-\gamma)}}.
\end{equation}

Consider now the complementary set $$ \tilde{\Omega}_n^C=\left\{\omega, \quad  \exists\,  k \leq k_n  \quad \underset{ u,\bolds{\bolds{l_u}}}{\sup} |\psth{\tilde{R}_{k-1}(\bar{f})}{\pf{\bolds{l_u}} u}|\leq 4 C \left( \frac{5}{2} \right)^{k-1} \zeta_n\right\}.$$
Remark that
\beqe
\barr{lll}
\norm{\tilde{R}_k(\bar{f})}^2 &=& \psth{\tilde{R}_k(\bar{f})}{\bar{f}-\gamma \sum_{j=0}^{k-1} \psth{\tilde{R}_j(\bar{f})}{\vfsbis} \vfsbis} \\
	&\leq &  \betasumm  \underset{ u,\bolds{\bolds{l_u}}}{\sup} \abs{ \psth{\tilde{R}_k(\bar{f})}{\vfu}} + \gamma \sum_{j=0}^{k-1} \abs{\psth{\tilde{R}_j(\bar{f})}{\vfsbis}} \underset{ u,\bolds{\bolds{l_u}}}{\sup} \abs{ \psth{\tilde{R}_k(\bar{f})}{\vfu}}.
\earr
\eeqe
Moreover, 
\beqe
\barr{lll}
\underset{ u,\bolds{\bolds{l_u}}}{\sup} \abs{ \psth{\tilde{R}_k(\bar{f})}{\vfu}} &\leq &\underset{ u,\bolds{\bolds{l_u}}}{\sup} \abs{ \psth{\tilde{R}_k(\bar{f})}{\pfu}} +  \underset{ u,\bolds{\bolds{l_u}}}{\sup} \abs{ \psth{\tilde{R}_k(\bar{f})}{\vfu - \pfu}}\\
&\leq & \underset{ u,\bolds{\bolds{l_u}}}{\sup} \abs{ \psth{\tilde{R}_k(\bar{f})}{\pfu}} +  \norm{\bar{f}} \zeta_n \quad \textrm{by Theorem 1 and (\ref{rtilde})}
\earr
\eeqe

We hence have
\beq\label{eq:eq1}
\barr{lll}
\norm{\tilde{R}_k(\bar{f})}^2 &\leq & \left( \betasumm + \gamma \sum_{j=0}^{k-1} \abs{\psth{\tilde{R}_j(\bar{f})}{\vfsbis}} \right) \left( \underset{ u,\bolds{\bolds{l_u}}}{\sup} \abs{ \psth{\tilde{R}_k(\bar{f})}{\pfu}} +\norm{\bar{f}} \zeta_n  \right)\\ 
	&\leq & \left( \betasumm + \gamma k\norm{\bar{f}} \right) \left( \underset{ u,\bolds{\bolds{l_u}}}{\sup} \abs{ \psth{\tilde{R}_k(\bar{f})}{\pfu}} +\norm{\bar{f}} \zeta_n \right)\\
	&\leq &  \left( \betasumm + \gamma k\norm{\bar{f}} \right) \left(4 C \left( \frac{5}{2} \right)^{k} \zeta_n +\norm{\bar{f}} \zeta_n \right) \quad \textrm{on }~\tilde{\Omega}_n^C
\earr
\eeq


Finally, on the set $ (\Omega_n \cap \tilde{\Omega}_n) \cup \tilde{\Omega}_n^C$, by Equations (\ref{omega}) and (\ref{eq:eq1}),
\beq\label{eer1}
\norm{ \tilde{R}_k(\bar{f})}^2  \leq  
C'^2 \left( 1+\frac{1}{4} \gamma (2-\gamma)k \right) ^{-\frac{2-\gamma}{6-\gamma}} 
+ \left( \betasumm + \gamma k\norm{\bar{f}} \right) (4 C \left( \frac{5}{2} \right)^{k} \zeta_n +\norm{\bar{f}} \zeta_n) 
\eeq

To conclude the first part of the proof, remark that $$ P\left((\Omega_n \cap \tilde{\Omega}_n) \cup \tilde{\Omega}_n^C \right) \geq P(\Omega_n) \underset{n \rightarrow +\infty}{\longrightarrow} 1 .$$ 
Now, by Assumption $(\mathbf{H_s})$ and by Lemma \ref{lem:max}, we have,
\beqe
\norm{\bar{f}} \zeta_n \leq \betasumm N_{n_1} \zeta_n \leq \betasumm (M^*+\gopxiv) \zeta_n \rightarrow 0.
\eeqe

Thus, Inequality (\ref{eer1}) holds almost surely, and for $ k_n< (\xi/2-\vartheta)/2 \log(3) \log(n) $, which grows sufficiently slowly, we get
\begin{equation}\label{eq:final}
\norm{\tilde{R}_{k_{n}}(\bar{f})} \cvproinf 0.
\end{equation}

Consider now $ A_k:=\norm{R_k(\bar f)  - \tilde{R}_k(\bar{f}) } $ for $k\geq 1$.
By definitions reminded in (\ref{theorres})-(\ref{phantom}), we have:

\begin{eqnarray}
A_k  	  &\leq& A_{k-1} + \gamma |\psempnn{Y-G_{k-1}(\bar{f})}{\vfs}-\psth{\tilde{R}_{k-1}(\bar{f})}{\vfs}|\nonumber \\
 	  &\leq&  A_{k-1} + \gamma |\psempnn{Y-G_{k-1}(\bar{f})}{\vfs}-\psth{\tilde{R}_{k-1}(\bar{f})}{\pfs}| \label{ineq1} \\
 	  & &+ \gamma |\psth{\tilde{R}_{k-1}(\bar{f})}{\vfs -\pfs}| \nonumber.
\end{eqnarray}
By Lemma \ref{lemme:lemmeBuhl}, we then deduce the following inequality on $ \Omega_n $:
\beq \label{aa1}
A_k \leq A_{k-1} + \gamma\left( C \left( \frac{5}{2} \right) ^{k-1} +1 \right) \zeta_n + \gamma \norm{\bar{f}} \zeta_n. 
\eeq

Since $A_0=0$, we deduce recursively from Equation (\ref{aa1}) that, on $\Omega_n$, 
\beqe
 A_{k_{n}} \cvproinf 0.
\eeqe

Finally, as
\beqe\label{decomporesbis}
\norm{\hat f-\tilde f}=\norm{G_{k_{n}}(\bar f)-\tilde f} \leq \norm{\bar f-\tilde f} + \norm{R_{k_{n}}(\bar f)-\tilde  R_{k_{n}}(\bar f)}+ \norm{\tilde  R_{k_{n}}(\bar f)},
\eeqe

it remains to treat the term $\norm{\bar f - \tilde f}$. As,
$$
 \norm{\bar f - \tilde f} \leq  \betasumm \norm{\pfu - \vfu},
$$ 
and the end of the proof follows using Assumption $\mathbf{(H_{s})}$ and Theorem 1.
  \hfill $\square$

\bibliographystyle{apalike}
\bibliography{biblio_boosting}

\vskip .65cm
\noindent
Institut de Math\'ematiques de Toulouse, 118, route de Narbonne
F-31062 Toulouse Cedex 9, France
\vskip 2pt
\noindent
magali.champion@math.univ-toulouse.fr
\vskip 2pt
\noindent
Institut de Math\'ematiques de Toulouse, 118, route de Narbonne
F-31062 Toulouse Cedex 9, France
\vskip 2pt
\noindent
gaelle.chastaing@math.univ-toulouse.fr
\vskip 2pt
\noindent
Institut de Math\'ematiques de Toulouse, 118, route de Narbonne
F-31062 Toulouse Cedex 9, France
\vskip 2pt
\noindent
sebastien.gadat@math.univ-toulouse.fr
\vskip 2pt
\noindent
Universit\'e Joseph Fourier, LJK/MOISE BP 53, 38041 Grenoble Cedex, France
\vskip 2pt
\noindent
clementine.prieur@imag.fr
\vskip .3cm
\end{document}